\DeclareSymbolFontAlphabet{\amsbb}{bbold}
\DeclareSymbolFontAlphabet{\mathbb}{AMSb}%
\newtheorem{thm}{Theorem}[section]
\newtheorem{lem}[thm]{Lemma}
\newtheorem{prop}[thm]{Proposition}
\theoremstyle{definition}
\theoremstyle{remark}
\DeclareMathOperator{\supp}{supp}
\DeclareMathOperator*{\esssup}{ess\ sup}
\DeclareMathOperator*{\essinf}{ess\ inf}
\newcommand{\zinf}{\mathbb{R}_+}
\newcommand{\zinfn}{\mathbb{R}_+^n}
\newcommand{\zz}{\zinfn\times \zinfn}
\newcommand{\zia}{\zinfn\times(-1,1)^n}
\newcommand{\zzia}{\zinfn\times\zinfn\times(-1,1)^n}
\newcommand{\loc}{\text{loc}}
\newcommand{\glob}{\text{glob}}
\DeclareMathOperator{\Real}{Re}
\newcommand{\mhl}{M_{\textup{HL}}}
\numberwithin{equation}{section}
\begin{document}
    \title[Harmonic analysis operators associated with Laguerre polynomial...]
    {Harmonic analysis operators associated with Laguerre polynomial expansions on variable Lebesgue spaces}

    \dedicatory{Dedicated to the memory of our beloved Eleonor Harboure.}

    \author[J. J. Betancor]{Jorge J. Betancor}
    \address{Jorge J. Betancor\newline
        Departamento de An\'alisis Matem\'atico, Universidad de La Laguna,\newline
        Campus de Anchieta, Avda. Astrof\'isico S\'anchez, s/n,\newline
        38721 La Laguna (Sta. Cruz de Tenerife), Spain}
    \email{jbetanco@ull.es}

    \author[E. Dalmasso]{Estefan\'ia Dalmasso}
    \address{Estefan\'ia Dalmasso, Pablo Quijano\newline
        Instituto de Matem\'atica Aplicada del Litoral, UNL, CONICET, FIQ.\newline Colectora Ruta Nac. Nº 168, Paraje El Pozo,\newline S3007ABA, Santa Fe, Argentina}
    \email{edalmasso@santafe-conicet.gov.ar, pquijano@santafe-conicet.gov.ar}

    \author[P. Quijano]{Pablo Quijano}

    \author[R. Scotto]{Roberto Scotto}
    \address{Roberto Scotto\newline
        Universidad Nacional del Litoral, FIQ.\newline Santiago del Estero 2829,\newline S3000AOM, Santa Fe, Argentina}
    \email{roberto.scotto@gmail.com}

    \thanks{The first author is partially supported by grant PID2019-106093GB-I00 from the Spanish Government. The second and fourth authors are partially supported by grants PICT-2019-2019-00389 (ANPCyT) and CAI+D 2019-015 (UNL)}
    \date{\today}
    \subjclass[2020]{42B15, 42B20, 42B25, 42B35}

    \keywords{Variable exponent $L^p$-spaces, Laguerre polynomials, diffusion semigroups, maximal operators, Riesz transforms, Littlewood-Paley functions. }

    %%% ----------------------------------------------------------------------
    \begin{abstract}
        In this paper we give sufficient conditions on a measurable function $p:(0,\infty)^n\rightarrow [1,\infty)$ in order that harmonic analysis operators (maximal operators, Riesz transforms, Littlewood--Paley functions and multipliers) associated with $\alpha$-Laguerre polynomial expansions are bounded on the variable Lebesgue space $L^{p(\cdot)} ((0,\infty)^n, \mu_\alpha)$, where $d\mu_\alpha (x)=2^n\prod_{j=1}^n \frac{x_j^{2\alpha_j+1} e^{-x_j^2}}{\Gamma(\alpha_j+1)} dx$, being $\alpha=(\alpha_1, \dots, \alpha_n)\in [0,\infty)^n$ and $x=(x_1,\dots,x_n)\in (0,\infty)^n$.
    \end{abstract}
    %%% ----------------------------------------------------------------------
    \date{}
    \maketitle
    %%% ----------------------------------------------------------------------

    \section{Introduction and main results}
    In this article we establish $L^{p(\cdot)}-$boundedness properties of harmonic analysis operators appearing in the context of Laguerre polynomials.

    For every $\alpha >-1$ and $k\in \mathbb N:=\{0,1,2,\dots\}$, the normalized Laguerre polynomial of type $\alpha$ and degree $k$ is defined by the formula (c.f. \cite{Leb}, \cite{Sz})
    \[L_k^\alpha(x)= \sqrt{\frac{\Gamma(\alpha+1)}{\Gamma(\alpha+k+1)k!}}e^xx^{-\alpha} \frac{d^k}{dx^k}\left(e^{-x} x^{k+\alpha}\right), \quad x\in (0,\infty).\]
    Let $\alpha=(\alpha_1, \dots, \alpha_n)\in \left(-1,\infty\right)^n$. For every $k=(k_1,\dots, k_n)\in \mathbb N^n$, the $k$--th Laguerre polynomial of type $\alpha$ and degree $\widehat{k}:=k_1+\cdots + k_n$ is defined by
    \[L_k^\alpha(x)=\prod_{i=1}^n L_{k_i}^{\alpha_i}(x_i), \quad x=(x_1,\dots,x_n)\in (0,\infty)^n:=\mathbb{R}_+^n.\]
    The sequence of polynomials $\{L_k^\alpha\}_{k\in \mathbb N^n}$ is an orthonormal basis for $L^2(\zinfn, \nu_\alpha)$ being $d\nu_\alpha (x)=\prod_{j=1}^n \frac{x_j^{\alpha_j} e^{-x_j}}{\Gamma(\alpha_j+1)} dx$ a non-doubling measure defined on $\zinfn$, see \cite[\S4.21]{Leb} for the orthonormality of the family.

    We define, for each $k\in \mathbb N^n$, $\mathcal{L}_k^\alpha (x)=\prod_{i=1}^n L_{k_i}^{\alpha_i}(x_i^2)$, $x=(x_1,\dots,x_n)\in \zinfn$. The sequence $\{\mathcal{L}_k^\alpha\}_{k\in \mathbb N^n}$ is an orthonormal basis for $L^2(\zinfn, \mu_\alpha)$ where \[d\mu_\alpha (x)=2^n\prod_{j=1}^n \frac{x_j^{2\alpha_j+1} e^{-x_j^2}}{\Gamma(\alpha_j+1)} dx\]
    is the pull-back measure from $d\nu_\alpha$ on $\zinfn$ through the one-to-one and onto change of variables $\Psi:\mathbb{R}_+^n\to \mathbb{R}_+^n$ defined as  $\Psi (x)=x^2:=(x_1^2,\cdots, x_n^2),$ for $x=(x_1,\cdots, x_n)\in \mathbb{R}_+^n.$

    We consider the differential Laguerre operator defined on $\zinfn$ as follows
    \[\amsbb{\Delta}_\alpha=-\frac14 \sum_{j=1}^n \left(\frac{d^2}{dx_j^2}+\left(\frac{2\alpha_j+1}{x_j}-2x_j\right)\frac{d}{dx_j}\right).\]
    It turns out that the polynomials $\mathcal{L}_k^\alpha$ are eigenfunctions of the operator $\amsbb{\Delta}_\alpha$, with $\amsbb{\Delta}_\alpha \mathcal{L}_k^\alpha=\widehat{k} \mathcal{L}_k^\alpha$ for every $k\in \mathbb N^n.$

    For every $f\in L^2(\zinfn, \mu_\alpha)$ and $k\in \mathbb N^n$, we denote
    \[c_k^\alpha (f)=\int_{\zinfn} \mathcal{L}_k^\alpha (x) f(x) d\mu_\alpha(x).\]
    We define the operator $\Delta_\alpha$ by
    \[\Delta_\alpha f= \sum_{k\in \mathbb N^n} \lambda_k c_k^\alpha (f) \mathcal{L}_k^\alpha, \quad f\in D(\Delta_\alpha),\]
    where $\lambda_k=\widehat{k}$ for every $k=(k_1,\dots, k_n)\in \mathbb N^n$, and
    \[D(\Delta_\alpha)=\left\{f\in L^2(\zinfn, \mu_\alpha): \sum_{k\in \mathbb N^n} \left|\lambda_k c_k^\alpha(f)\right|^2<\infty\right\},\]
    is the domain of $\Delta_\alpha$ on $L^2(\zinfn,\mu_\alpha).$
    Note that $\Delta_\alpha f=\amsbb{\Delta}_\alpha f$ for every $f\in C_c^\infty (\zinfn)$, the space of smooth and compactly supported functions on $\zinfn$.

    The operator $\Delta_\alpha$ is symmetric and positive, and $-\Delta_\alpha$ generates a semigroup of operators $\{W_t^\alpha\}_{t>0}$ in $L^2(\zinfn, \mu_\alpha)$ where, for every $t>0$,
    \[W_t^\alpha (f)=\sum_{k\in \mathbb N^n} e^{-\lambda_k t} c_k^\alpha (f) \mathcal{L}_k^\alpha, \quad f\in L^2(\zinfn, \mu_\alpha).\]
    According to the Hille-Hardy formula (\cite[(4.17.6)]{Leb} with $x$ and $y$ replaced by $x^2$ and $y^2$ respectively, and $t$ by $e^{-t}$), we have that
    \begin{align*}
        \sum_{k\in \mathbb N^n} e^{-\lambda_k t} \mathcal{L}_k^\alpha(x) \mathcal{L}_k^\alpha(y)&=\prod_{j=1}^n\frac{\Gamma(\alpha_j+1)}{1-e^{-t}} \left(e^{-t/2} x_jy_j\right)^{-\alpha_j} I_{\alpha_j}\left(\frac{2e^{-t/2}x_jy_j}{1-e^{-t}}\right)\\
        &\qquad \times \exp\left(-\frac{e^{-t}}{1-e^{-t}} (x_j^2+y_j^2)\right),
    \end{align*}
    for $x=(x_1,\dots, x_n), y=(y_1,\dots, y_n) \in \zinfn$ and $t>0$. Here  $I_{\nu}$ is the modified Bessel function of the first kind and order $\nu>-1$.  We can write, for every $f\in L^2(\zinfn, \mu_\alpha)$ and $t>0$,
    \begin{equation}\label{Wtalfa}
        W_t^\alpha (f)(x)=\int_{\zinfn} W_t^\alpha (x,y) f(y) d\mu_\alpha (y), \quad x\in \zinfn,
    \end{equation}
    being
    \[W_t^\alpha (x,y)=\prod_{j=1}^n \frac{\Gamma(\alpha_j+1)}{1-e^{-t}}\left(e^{-t/2} x_jy_j\right)^{-\alpha_j} I_{\alpha_j}\left(\frac{2e^{-t/2}x_jy_j}{1-e^{-t}}\right) e^{-\frac{e^{-t}}{1-e^{-t}} (x_j^2+y_j^2)},\]
    for $x=(x_1,\dots, x_n), y=(y_1,\dots, y_n) \in \zinfn$ and $t>0.$
    The integral in \eqref{Wtalfa} defines, for every $t>0$, a contraction on $L^p(\zinfn, \mu_\alpha)$, for every $1\leq p\leq \infty$. By defining, for each $t>0$, $W_t^\alpha$ by \eqref{Wtalfa}, the family $\{W_t^\alpha\}_{t>0}$ is a symmetric diffusion semigroup in Stein's sense in $(\zinfn, \mu_\alpha)$ (see \cite[p.~65]{StLP}).

    The Poisson semigroup $\{P_t^\alpha\}_{t>0}$ associated with the operators $-\sqrt{\Delta_\alpha}$ is defined by
    \[P_t^\alpha(f)=\sum_{k\in \mathbb N^n} e^{-t\sqrt{\lambda_k}} c_k^\alpha(f) \mathcal{L}_k^\alpha, \quad f\in L^2(\zinfn, \mu_\alpha) ,  t>0.\]
    By using the subordination formula, we have that
    \begin{equation}\label{Ptalfa}
        P_t^\alpha(f)=\frac{t}{2\sqrt{\pi}} \int_0^\infty \frac{e^{-\frac{t^2}{4u}}}{u^{\frac32}} W_u^\alpha (f) du, \quad f\in L^2(\zinfn, \mu_\alpha) ,  t>0.
    \end{equation}
    We can write, for every $t>0$ and $f\in L^2(\zinfn, \mu_\alpha)$,
    \begin{equation}\label{Ptalfax}
        P_t^\alpha(f)(x)=\int_{\zinfn} P_t^\alpha (x,y) f(y) d\mu_\alpha(y), \quad x\in \zinfn,
    \end{equation}
    where
    \[P_t^\alpha(x,y)=\frac{t}{2\sqrt{\pi}} \int_0^\infty \frac{e^{-\frac{t^2}{4u}}}{u^{\frac32}} W_u^\alpha (x,y) du, \quad x,y\in \zinfn , t>0.\]
    For each $t>0$, the integral in \eqref{Ptalfax} defines a contraction on $L^p(\zinfn, \mu_\alpha)$ when $1\leq p\leq \infty$. By defining $P_t^\alpha$ as in \eqref{Ptalfa}, $\{P_t^\alpha\}_{t>0}$ is a Stein symmetric diffusion semigroup in $(\zinfn, \mu_\alpha)$.

    The study of harmonic analysis in the Laguerre setting was initiated by Muckenhoupt (\cite{Mu1, Mu2}). Muckenhoupt's context is transferred to ours by applying the transform mapping $\Psi$ mentioned above (see, for instance, \cite{Sa1}).

    The maximal operators $W_*^\alpha$ and $P_*^\alpha$ are defined by
    \[W_*^\alpha(f)=\sup_{t>0}|W_t^\alpha(f)|, \quad P_*^\alpha(f)=\sup_{t>0}|P_t^\alpha(f)|.\]
    From \cite[p.~73]{StLP}, it follows that both $W_*^\alpha$ and $P_*^\alpha$ are bounded on $L^p(\zinfn, \mu_\alpha)$ for every $1<p\leq \infty$. Muckenhoupt (\cite{Mu1}) proved that $W_*^\alpha$ is bounded from $L^1(\zinf,\mu_\alpha)$ into $L^{1,\infty}(\zinf,\mu_\alpha)$. He considered the one-dimensional case. This result was extended to higher dimensions by Dinger (\cite{Di}). Note that the subordination formula \eqref{Ptalfa} allows us to deduce the $L^p$-boundedness properties for $P_*^\alpha$ from the corresponding ones of $W_*^\alpha$. The holomorphic Laguerre semigroups and the maximal operators associated with them where studied in \cite{Sa3}.

   Taking into account the spectral decomposition of $\Delta_\alpha$ and \cite[\S7.2]{NS} we define the first order Riesz-Laguerre transform associated to $\Delta_\alpha$ as \[R_\alpha^i f=\sum_{k\in \mathbb N^n\setminus \{(0,\dots,0)\}} \frac{1}{\sqrt{\lambda_k}} \partial_{x_i} \mathcal{L}_k^\alpha(x) c_k^\alpha(f), \qquad f\in L^2(\zinfn,\mu_\alpha).\]
    Thus the operator $R_\alpha^i$ turns out to be bounded on $L^2(\zinfn,\mu_\alpha).$

   Moreover, we can also define the higher order Riesz-Laguerre transforms as an extension of the first order ones in the following way
   \begin{equation*} R_\alpha^\beta f= \sum_{k\in \mathbb N^n\setminus \{(0,\dots,0)\}} \frac{1}{\lambda_k^{\widehat{\beta}/2}} D^\beta_x \mathcal{L}_k^\alpha(x) c_k^\alpha(f),
   \end{equation*}
    with $\beta\in \mathbb N^n\setminus \{(0,\dots,0)\}$ and $D^\beta_x=\frac{\partial^{\widehat{\beta}}}{\partial x_1^{\beta_1}\dots\ \partial x_n^{\beta_n}}.$ They are also bounded on $L^2(\zinfn,\mu_\alpha),$ see \cite{NS}.
    Let us remark that $R_\alpha^i f=R_\alpha^{e_i}f$ with $e_i$ the $i$-th unit vector and $f\in L^2(\zinfn, \mu_\alpha).$

For every $b>0$ we define the fractional integral $\Delta_\alpha^{-b}$ as the $-b$ power of $\Delta_\alpha$, given, for every $f\in L^2(\zinfn,\mu_\alpha)$, by
\[
\Delta_\alpha^{-b} f=\sum_{k\in \mathbb{N}^n\setminus\{0\}}\lambda_k^{-b}c_k^\alpha(f)\mathcal{L}_k^\alpha.
\]
Let us notice that for any number $b>0$ in \cite{Sa1} it was proved that the integral kernel for $\Delta_\alpha^{-b}$ is given by \begin{align*}
   K_b (x,y)
   &=\frac{1}{\Gamma(b)}\int_0^\infty t^{b-1}\left(W_t^\alpha(x,y)-1\right)dt\\ &=\frac{1}{\Gamma(b)}\int_0^1 (-\log r)^{b-1}\left(W^\alpha_{-\log r}(x,y)-1\right)\, \frac{dr}{r}.
   \end{align*}

 If $f\in C_c^\infty(\zinfn)$ we have that, for every $\beta\in \mathbb N^n\setminus \{(0,\dots,0)\}$,
    \[
R_\alpha^\beta f=D^\alpha\Delta_\alpha^{-\widehat{\beta}/2}f.
\]

   According to what was done in \cite{Sa1} we can also conclude that the operator $R_\alpha^\beta$, off the diagonal, is given by the smooth kernel $D_x^\beta K_{\frac{\widehat{\beta}}{2}}(x,y),$ i.e. \[R_\alpha^\beta f (x)=\int_{\zinfn} D_x^\beta K_{\frac{\widehat{\beta}}{2}}(x,y) f(y)\, d\mu_\alpha(y),\] for all $x\notin \text{supp}(f)$ when $f\in C_c^\infty(\zinfn)$.

   From \cite[Theorem~1.1]{BFRS}, \cite[Theorem~1.1]{Sa1} and \cite[Theorem~13]{N}, we deduce that $R_\alpha^\beta$ can be extended from $L^2(\zinfn,\mu_\alpha)\cap L^p(\zinfn,\mu_\alpha)$ to $L^p(\zinfn,\mu_\alpha)$ as a bounded operator on $L^p(\zinfn,\mu_\alpha)$ when $1<p<\infty$. It can also be extended from $L^1(\zinfn,\mu_\alpha)$ into $L^{1,\infty}(\zinfn,\mu_\alpha)$ for $\widehat{\beta}\le 2$ and from $L^1(\zinfn, w \mu_\alpha)$ into $L^{1,\infty}(\zinfn, \mu_\alpha)$, with $w(y)=(1+\sqrt{|y|})^{\widehat{\beta}-2}$,  for $\widehat{\beta}>2$ (see \cite{FSS1}). We continue denoting by $R_\alpha^\beta$ to those extensions. Furthermore, there exists a constant $c_\beta$ such that, for every $f\in L^p(\zinfn,\mu_\alpha)$, $1\le p<\infty$,
    \[R_\alpha^\beta (f)(x)= c_\beta f(x)+\lim_{\epsilon\to 0^+}\int_{y\in\zinfn,\,|x-y|>\epsilon} R_\alpha^\beta(x,y) f(y) d\mu_\alpha(y),\qquad  \text{a.e. } x\in \zinfn,\]
    where
    \begin{equation}\label{Rieszkernelderiv}
        \begin{split}
         R_\alpha^\beta(x,y)&=\frac{1}{\Gamma\left(\frac{\widehat{\beta}}{2}\right)} \int_0^\infty t^{\frac{\widehat{\beta}}{2}-1} D_x^\beta W_t^\alpha (x,y)dt, \\ & =\frac{1}{\Gamma\left(\frac{\widehat{\beta}}{2}\right)} \int_0^1 (-\log r)^{\frac{\widehat{\beta}}{2}-1} D_x^\beta W_{-\log r}^\alpha (x,y)\, \frac{dr}{r},
        \end{split}
    \end{equation}
    for $x,y\in \zinfn$, $x\neq y.$

     We consider the Littlewood--Paley functions $g_\alpha^{\beta,k}$ defined for Poisson semigroups $\{P_t^\alpha\}_{t>0}$ for $k\in \mathbb N$ and $\beta\in \mathbb{N}^n$ such that $k+\widehat{\beta}>0$, as follows
    \begin{equation*}
        g^{\beta,k}_\alpha (f)(x) =
        \left( \int_0^\infty\left|t^{k+\widehat{\beta}} \partial^k_t D^\beta_x
        P_t^\alpha (f)(x)\right|^2 \frac{dt}{t}
        \right)^{1/2},\;
        x\in\zinfn.
    \end{equation*}
    For simplicity, when $\beta=\boldsymbol{0}=(0,\dots, 0)$, we shall write $g_\alpha^k=g_\alpha^{\boldsymbol{0},k}$. According to \cite[Corollary~1]{StLP}, $g_\alpha^k$ is bounded on $L^p(\zinfn, \mu_\alpha)$ for every $k\in \mathbb N$, $k\geq 1$ and $1<p<\infty$. In \cite[Theorem~1.2]{BdL} it was recently proved that $g_\alpha^k$ is bounded  from $L^1(\zinf,\mu_\alpha)$ into $L^{1,\infty}(\zinf,\mu_\alpha)$. Nowak in \cite[Theorems 6 and 7]{N} proved $L^p$-boundedness properties for $1<p<\infty$ for Littlewood-Paley functions associated with Laguerre polynomial expansions in the $\nu_\alpha$-context including one spatial derivative.

    We say that a function $m$ is of Laplace transform type when
    \[m(x)=x\int_0^\infty \phi(y)e^{-xy} dy, \quad x\in \zinf,\]
    being $\phi \in L^\infty (\zinf)$. Given $m$ of Laplace transform type, we define the spectral multiplier for $\Delta_\alpha$, $T_m^\alpha$, associated with $m$ by
    \[T_m^\alpha (f)=\sum_{k\in \mathbb N^n} m(\lambda_k) c_k^\alpha(f) \mathcal{L}_\alpha^k, \quad f\in L^2(\zinfn, \mu_\alpha).\]
    Since $m$ is bounded, $T_m^\alpha$ is bounded on $L^2(\zinfn, \mu_\alpha)$. According to \cite[Corollary~3, p.~121]{StLP}, $T_m^\alpha$ can be extended from $L^2(\zinfn,\mu_\alpha)\cap L^p(\zinfn,\mu_\alpha)$ to $L^p(\zinfn,\mu_\alpha)$ as a bounded operator on $L^p(\zinfn,\mu_\alpha)$ when $1<p<\infty$. In \cite{Sa4} it was established that $T_m^\alpha$ can be extended from $L^2(\zinfn,\mu_\alpha)\cap L^1(\zinfn,\mu_\alpha)$ to $L^1(\zinfn,\mu_\alpha)$ as a bounded operator from $L^1(\zinfn,\mu_\alpha)$ into $L^{1,\infty}(\zinfn,\mu_\alpha)$. From a higher dimension version of \cite[Theorem~1.1]{BCCR} we deduce that, for every $f\in L^p(\zinfn,\mu_\alpha)$ with $1\leq p<\infty$,
    \[T_m^\alpha(f)(x)=\lim_{\epsilon\rightarrow 0^+} \left(\Lambda(\epsilon)f(x)+\int_{\substack{|x-y|>\epsilon,\\ y\in \zinfn}} K_\phi^\alpha (x,y) f(y)d\mu_\alpha(y)\right), \quad \text{a.e. }x\in \zinfn,  \]
    where $\Lambda\in L^\infty (\zinf)$ and
    \[K_\phi^\alpha(x,y)=\int_0^\infty \phi(t) \left(-\frac{\partial}{\partial t}\right)W_t^\alpha (x,y) dt, \quad x,y\in \zinfn, x\neq y.\]
    A special case of multiplier of Laplace transform type is the imaginary power $\Delta_\alpha^{i\beta}$ of $\Delta_\alpha$ that appears when $m(x)=x^{i\beta}$, for $x\in \zinf$ and $\beta\in \mathbb R$.

    Our objective is to give conditions on a function $p:\zinfn\rightarrow [1,\infty)$ in order that the operators we have just defined (maximal operators, Riesz transforms, Littlewood--Paley functions  and multipliers of Laplace transform type) are bounded on $L^{p(\cdot)}(\zinfn, \mu_\alpha)$.

    Exhaustive studies about Lebesgue spaces with variable exponent (also called generalized Lebesgue spaces or variable Lebesgue spaces) can be found in the monographs \cite{CUF} and \cite{DHHR}.

    Assume that $p:\zinfn\rightarrow [1,\infty)$ is measurable. We say that a measurable function $f$ on $\zinfn$ belongs to $L^{p(\cdot)}(\zinfn, \mu_\alpha)$ if the modular $\varrho_{p(\cdot),\mu_\alpha}(f/\lambda)$ is finite for some $\lambda>0$, where
    \[\varrho_{p(\cdot),\mu_\alpha}(g)=\int_{\zinfn}|g(x)|^{p(x)}d\mu_\alpha(x).\]
    We define on $L^{p(\cdot)}(\zinfn, \mu_\alpha)$ the Luxemburg norm $\|\cdot\|_{L^{p(\cdot)}(\zinfn, \mu_\alpha)}$ associated with $\varrho_{p(\cdot),\mu_\alpha}$, that is,
    \[\|f\|_{L^{p(\cdot)}(\zinfn, \mu_\alpha)}=\inf\left\{\lambda>0:\varrho_{p(\cdot),\mu_\alpha}\left(\frac f\lambda\right)\leq 1 \right\}.\]
    The space $\left(L^{p(\cdot)}(\zinfn, \mu_\alpha),\|\cdot\|_{L^{p(\cdot)}(\zinfn, \mu_\alpha)} \right)$ is a Banach function space. The variable Lebesgue space $L^{p(\cdot)}(\zinfn):=L^{p(\cdot)}(\zinfn, dx)$ and its norm $\|\cdot\|_{L^{p(\cdot)}(\zinfn)}:=\|\cdot\|_{L^{p(\cdot)}(\zinfn, dx)}$ are defined in the obvious way. 

    Lebesgue spaces with variable exponents appear associated to physics problems, image processing and modeling of electrorheological fluids (see, for instance, \cite{AM}, \cite{CLR} and \cite{Ru}).

    As it is well-known, the Hardy--Littlewood maximal function $\mhl$ plays a central role in the study of $L^p$-boundedness properties of harmonic analysis operators. The following conditions on the exponent $p(\cdot)$ arise related with the boundedness of $\mhl$ on $L^{p(\cdot)}(\mathbb R^n)$ (\cite{CUFN} and \cite{Die}):
    \begin{enumerate}[label=(\alph*)]
        \item Local log-H\"older condition: a measurable function $p:\Omega\subset \mathbb R^n\rightarrow [1,\infty)$ is said to be in $\textup{LH}_0(\Omega)$ if there exists $C>0$ such that
        \[|p(x)-p(y)|\leq \frac{C}{-\log|x-y|}, \quad x,y\in \Omega, 0<|x-y|<\tfrac12.\]
        \item Decay log-H\"older condition: a measurable function ${p:\Omega\subset \mathbb R^n\rightarrow [1,\infty)}$ is said to be in $\textup{LH}_\infty(\Omega)$ when there exists $C>0$ and $p_\infty\geq 1$ such that
        \[|p(x)-p_\infty|\leq \frac{C}{\log(e+|x|)}, \quad x \in \Omega.\]
    \end{enumerate}
    We define $\textup{LH}(\Omega)=\textup{LH}_0(\Omega)\cap \textup{LH}_\infty(\Omega)$, where $\Omega\subset \mathbb R^n.$

    If $p:\Omega\subset \mathbb R^n\rightarrow [1,\infty)$ is measurable, we denote by $p^-=\essinf_{\Omega} p$ and ${p^+=\esssup_{\Omega} p}$ the essential infimum and supremum of $p$ on $\Omega$, respectively.

    If $1< p^-\leq p^+<\infty$ and $p\in \textup{LH}(\mathbb R^n)$, then the Hardy--Littlewood maximal function is bounded on $L^{p(\cdot)}(\mathbb R^n)$ (\cite{CUFN}). However, $p\in  \textup{LH}(\mathbb R^n)$  is not necessary for this boundedness (\cite[Examples~4.1~and~4.43]{CUF}). The same conditions on $p$, $1< p^-\leq p^+<\infty$ and $p\in \textup{LH}(\mathbb R^n)$, assure that the Calder\'on--Zygmund singular integrals are bounded on $L^{p(\cdot)}(\mathbb R^n)$ (\cite[Theorem~5.39]{CUF}).

    In \cite{DS1}, Dalmasso and Scotto studied Riesz transforms in the Gaussian setting on variable Lebesgue spaces. In order to do this, they introduced a new class of exponents which is contained in $\textup{LH}_\infty(\mathbb R^n)$. A measurable function $p:\Omega\subset \mathbb R^n \rightarrow [1,\infty)$ is said to be in $\mathcal P^\infty_e(\Omega)$ when there exists $C>0$ and $p_\infty\geq 1$ such that
    \[|p(x)-p_\infty|\leq \frac{C}{|x|^2}, \quad x \in \Omega\setminus \{(0,\dots,0)\}.\]
    If $p_\infty\geq 1$, $A>0$ and $q\geq 2$ are given, the functions $p(x)=p_\infty+\frac{A}{(e+|x|)^q}$, for $x\in \mathbb R^n$, are in $\mathcal{P}^{\infty}_e(\mathbb R^n)$. Main properties of the functions in $\mathcal P^\infty_e(\mathbb R^n)$ were established in \cite{DS1}. Maximal operators defined by the heat semigroup (\cite{MPU}) and Riesz type singular integrals (\cite{DS2} and \cite{NPU}) associated with the Ornstein-Uhlenbeck differential operator were studied on $L^{p(\cdot)}(\mathbb R^n, \gamma_n)$ with $p\in \textup{LH}_0(\mathbb R^n)\cap \mathcal P^\infty_e(\mathbb R^n)$, where $d\gamma_n$ denotes the Gaussian measure.

    We now state the main results of this article concerning $L^{p(\cdot)}$-boundedness properties of harmonic analysis operators in the Laguerre setting.
    \begin{thm}\label{mainthm} Let $\alpha \in [0,\infty)^n$. Assume that $p\in \textup{LH}_0(\zinfn)\cap \mathcal P^\infty_e(\zinfn)$ with $1< p^-\leq p^+<\infty$. We denote by $T_\alpha$ one of the following operators:
    \begin{enumerate}[label=(\alph*)]
        \item \label{max}The maximal operators $W_*^\alpha$ and $P_*^\alpha$;
        \item The Laguerre-Riesz transformation $R^{\beta}_\alpha$, $\beta\in \mathbb N^n\setminus \{(0,\dots,0)\}$;
        \item The Littlewood--Paley functions $g_\alpha^{\beta, k}$ associated with the Poisson semigroup $\{P_t^\alpha\}_{t>0}$, where $\beta \in \mathbb{N}^n$ and $k\in \mathbb N$,  such that $k + \widehat{\beta}>0$;
        \item \label{multipl} The Laguerre spectral multipliers $T_m^\alpha$, where $m$ is a Laplace transform type function.
    \end{enumerate}
    Then, $T_\alpha$ is bounded on $L^{p(\cdot)}(\zinfn, \mu_\alpha)$.
    \end{thm}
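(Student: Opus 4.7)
The plan is to treat the four cases (a)--(d) in a unified manner by splitting each kernel into a local and a global part adapted to the Laguerre geometry. Fix an admissibility function $m(x) = \min\{1, 1/|x|\}$ (the analogue of the Gaussian radius in the Ornstein--Uhlenbeck setting) and the local region $N = \{(x,y) \in \zz : |x-y| \le C\, m(x)\}$. For each operator $T_\alpha$ in the list, write its (possibly vector-valued) off-diagonal kernel $K_\alpha(x,y)$ as $K_\alpha(x,y)\chi_N(x,y) + K_\alpha(x,y)\chi_{\zz\setminus N}(x,y)$, which induces a decomposition $T_\alpha = T_\alpha^{\loc} + T_\alpha^{\glob}$. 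For the maximal and Littlewood--Paley operators I would work in the corresponding vector-valued framework ($L^\infty(dt)$ for the maximals and $L^2(dt/t)$ for the $g$-functions).

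For the local part, on $N$ the measure $\mu_\alpha$ is doubling; indeed, on the relevant scales it is comparable to the weighted Lebesgue measure $\prod_j x_j^{2\alpha_j+1}\,dx_j$. Using the Hille--Hardy formula, the subordination \eqref{Ptalfa}, and the standard asymptotics of the Bessel functions $I_{\alpha_j}$, one derives Calder\'on--Zygmund type size and regularity estimates for $K_\alpha^{\loc}$ in each of the four cases. Combined with the $L^2(\mu_\alpha)$-boundedness of $T_\alpha$ inherited from its spectral definition, this turns $T_\alpha^{\loc}$ into a (vector-valued) Calder\'on--Zygmund operator with respect to the doubling measure $\mu_\alpha$ restricted to $N$. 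The Cruz-Uribe and Fiorenza theory for such operators on variable Lebesgue spaces then yields the $L^{p(\cdot)}(\zinfn, \mu_\alpha)$-boundedness of $T_\alpha^{\loc}$ using only $p \in \textup{LH}_0(\zinfn)$ and $1 < p^- \le p^+ < \infty$.

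The global part is where the condition $p \in \mathcal P^\infty_e(\zinfn)$ enters. On $\zz\setminus N$ one has $|x-y| \gtrsim m(x)$, which forces either $|x|$ or $|y|$ to be large, or $|x-y|$ itself to be bounded below. In each case, pointwise estimates for the kernel (obtained from $W_t^\alpha(x,y)$ and its $t$- and spatial derivatives via the same Bessel asymptotics) produce a positive majorant $H_\alpha(x,y)$ such that the corresponding integral operator $\mathcal H_\alpha$ is bounded on $L^q(\zinfn, \mu_\alpha)$ for every $q \in (1,\infty)$; these majorants are the ones implicit in \cite{BFRS}, \cite{Sa1}, \cite{N}, \cite{FSS1}, \cite{BdL}. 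To pass from the constant exponent $p_\infty$ to the variable exponent $p(\cdot)$, one splits, on the global region, $|g(x)|^{p(x)} = |g(x)|^{p_\infty}\,|g(x)|^{p(x)-p_\infty}$ and uses $|p(x)-p_\infty| \le C/|x|^2$ to control the extra factor. A modular computation, already standard in the Gaussian context (cf.\ \cite{DS1}, \cite{DS2}, \cite{MPU}, \cite{NPU}), then reduces the $L^{p(\cdot)}$-boundedness of $T_\alpha^{\glob}$ to the $L^{p_\infty}(\mu_\alpha)$-boundedness of $\mathcal H_\alpha$.

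The hardest step I expect will be the global part for the higher order Riesz transforms with $\widehat{\beta} > 2$: the endpoint results in \cite{FSS1} only give boundedness with the weight $w(y) = (1+\sqrt{|y|})^{\widehat{\beta}-2}$, so the natural majorant $\mathcal H_\alpha$ fails to be bounded on $L^1(\mu_\alpha)$. One must therefore offset this polynomial weight by the quadratic decay $|x|^{-2}$ of $p(x)-p_\infty$ provided by $\mathcal P^\infty_e(\zinfn)$, which is precisely why the hypothesis $p \in \mathcal P^\infty_e$ appears rather than the weaker $\textup{LH}_\infty$. Subdividing the global region into dyadic annuli in $|x|$ and summing the resulting absolutely convergent contributions should complete the argument uniformly across (a)--(d).
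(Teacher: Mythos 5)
Your global strategy---split into local and global pieces, Calder\'on--Zygmund theory for the local piece, a positive majorant for the global piece---matches the paper's blueprint, but both halves have genuine gaps.

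\textbf{Local part.} You assert that ``on $N$ the measure $\mu_\alpha$ is doubling'' and that the local piece needs only $p\in\textup{LH}_0(\zinfn)$. Neither is correct as stated. The measure $\mu_\alpha$ is not doubling on $(\zinfn,|\cdot|)$, and you cannot invoke CZ extrapolation for a ``measure restricted to $N$.'' What the paper actually does is run the vector-valued Calder\'on--Zygmund and Rubio de Francia extrapolation machinery for the genuinely doubling polynomial measure $\mathfrak m_\alpha$ (with $d\mathfrak m_\alpha(x)=\prod_i x_i^{2\alpha_i+1}\,dx$), which requires the $\mathfrak m_\alpha$-Hardy--Littlewood maximal function to be bounded on $L^{p(\cdot)}(\zinfn,\mathfrak m_\alpha)$; this already needs the full $\textup{LH}(\zinfn)$, not just $\textup{LH}_0$. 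The estimate is then transferred from $\mathfrak m_\alpha$ to $\mu_\alpha$ via a duality argument over the admissible covering $\{B_\ell\}$ of \cite[Lemma~4]{Sa2}, on each ball of which $\mu_\alpha\sim e^{-|x(\ell)|^2}\mathfrak m_\alpha$; this transfer crucially uses $p\in\mathcal P^\infty_e(\zinfn)$ through \cite[Lemma~2.5 and Corollary~2.8]{DS1}. So $\mathcal P^\infty_e$ is needed on the local side too. A more minor discrepancy: the paper's local/global cut is inserted inside the Bessel integral, using a smooth cutoff $\varphi(x,y,s)$ supported where $q_-(x,y,s)\lesssim (1+|x|+|y|)^{-1}$, not the cruder $|x-y|\le Cm(x)$; this is what makes the subsequent kernel size and regularity estimates tractable.

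\textbf{Global part.} Your reduction via $|g(x)|^{p(x)}=|g(x)|^{p_\infty}|g(x)|^{p(x)-p_\infty}$ followed by dyadic annuli in $|x|$ is a different and essentially undeveloped scheme. The paper uses $\mathcal P^\infty_e$ once and for all through the identity $e^{|y|^2/p(y)-|x|^2/p(x)}\sim e^{(|y|^2-|x|^2)/p_\infty}$ from \cite[Lemma~2.5]{DS1}, which exchanges the variable-exponent Gaussian density between $x$ and $y$ for a fixed-$p_\infty$ one; after that, all the remaining weight is polynomial. Boundedness of the global majorant $\mathcal H_{\alpha,\varepsilon}$ on $L^{p(\cdot)}(\zinfn,\mu_\alpha)$ (Proposition~\ref{propoaux}) is then the heart of the matter, and it is proved by a nontrivial Stein complex interpolation in a dimension-like parameter $z$: one embeds the one-dimensional polynomial-weight integral into $\mathbb R^k$ via radial coordinates, compares with the Euclidean Hardy--Littlewood maximal function there, and interpolates in $z$ (see the Appendix for the $n$-dimensional Three Lines Theorem needed). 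You simply assume that a bounded majorant plus a modular computation closes the argument, which leaves the core step unproved. For the higher order Riesz transforms with $\widehat\beta>2$ your plan to offset the weight $(1+\sqrt{|y|})^{\widehat\beta-2}$ via dyadic annuli is also not what the paper does and is not justified: the paper never passes through the weighted $L^1$ endpoint, but instead derives a pointwise bound $K_\alpha(x,y,s)\le C\,H_{\alpha,\varepsilon}(x,y,s)$ for $0<\varepsilon<\tfrac{1}{n+\widehat\alpha}$ following P\'erez's kernel estimates, after which Proposition~\ref{propoaux} directly applies. That pointwise domination is exactly what your sketch leaves undone.
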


    Hereinafter, we prove Theorem~\ref{mainthm}. In Section~\ref{sec2: method}, we explain the method we develop in order to prove that the operators given in \ref{max}--\ref{multipl} are bounded on $L^{p(\cdot)}(\zinfn, \mu_\alpha)$. In Section~\ref{sec3: aux op}, we introduce a global operator that will be a key ingredient for proving our main theorem.  In the following sections, we establish the $L^{p(\cdot)}$-boundedness for each class of operators. Our method exploits the decomposition of the operators into a local part and a global part, which is usual in the study of harmonic analysis in the Laguerre setting, but we need a careful adaptation to the variable exponent context.

    Throughout this paper, $C$ and $c$ will always denote positive constants that may change in each occurrence.

    \section{The method for proving our results}\label{sec2: method}

    In this section we describe the method we apply to prove the boundedness results.

    The polynomial measure $\mathfrak{m}_\alpha$ on $\zinfn$ defined by $d \mathfrak{m}_\alpha(x)=\prod_{i=1}^n x_i^{2\alpha_i+1} dx_i$ is doubling on $\zinfn$. Thus, the triple $(\zinfn, |\cdot|, \mathfrak{m}_\alpha)$ is a homogeneous space in the sense of Coifman and Weiss (\cite{CW}).

    Let $X$ be a Banach space. Suppose that $K:\zinfn \times \zinfn\setminus D\rightarrow X$ is a strongly measurable function, where $D=\{(x,x): x\in \zinfn\}$, satisfying the following two conditions:
    \begin{enumerate}[label=(\roman*)]
        \item \label{size} \textit{Size condition:} there exists $C>0$ such that
        \[\|K(x,y)\|_{X}\leq \frac{C}{\mathfrak{m}_\alpha(B(x,|x-y|))}, \quad x,y\in \zinfn, x\neq y; \]
        \item \label{reg} \textit{Regularity condition:} there exists $C>0$ such that
        \[\|K(x,y)-K(z,y)\|_{X}\leq \frac{C|x-z|}{|x-y|\ \mathfrak{m}_\alpha(B(x,|x-y|))} \]
        and
        \[\|K(x,y)-K(x,z)\|_{X}\leq \frac{C|y-z|}{|x-y|\ \mathfrak{m}_\alpha(B(x,|x-y|))} \]
        for every $x,y,z\in \zinfn$ with $|x-z|\leq \frac12 |x-y|$.
    \end{enumerate}
    When the function $K$ verifies \ref{size} and \ref{reg}, we say that $K$ is an $X$-valued Calder\'on--Zygmund kernel with respect to the homogeneous space $(\zinfn, |\cdot|, \mathfrak{m}_\alpha)$ in the Banach space $X$.

    For every exponent $q:\zinfn \rightarrow [1,\infty)$, we denote by $L^{q(\cdot)}_{X}(\zinfn, \mathfrak{m}_\alpha)$ the $X$-Bochner Lebesgue space with variable exponent $q$, defined in the natural way.

    Assume $T$ is a bounded operator from $L^2(\zinfn,\mathfrak{m}_\alpha)$ into $L^2_{X}(\zinfn,\mathfrak{m}_\alpha)$. We say that $T$ is an $X$-valued Calder\'on--Zygmund operator associated with the Calder\'on--Zygmund kernel $K$ when, for every $f\in C_c^\infty (\zinfn)$,
    \[Tf(x)=\int_{\zinfn} K(x,y) f(y)d\mathfrak{m}_\alpha(y), \quad \text{ a.e. }x\notin \supp(f).\]
    Here, the integral is understood in the $X$-Bochner sense.

    According to \cite[Theorem~1.1]{GLY} (see also \cite{RRT}), if $T$ is an $X$-valued Calder\'on--Zygmund operator on $(\zinfn, |\cdot|, \mathfrak{m}_\alpha)$, $T$ can be extended, for every $1\leq p<\infty$, from $L^2(\zinfn, \mathfrak{m}_\alpha)\cap L^p(\zinfn, \mathfrak{m}_\alpha)$ to $L^p(\zinfn, \mathfrak{m}_\alpha)$ as a bounded operator from $L^p(\zinfn, \mathfrak{m}_\alpha)$ into $L^p_{X}(\zinfn, \mathfrak{m}_\alpha)$ when $1<p<\infty$, and from $L^1(\zinfn, \mathfrak{m}_\alpha)$ into $L^{1,\infty}_{X}(\zinfn, \mathfrak{m}_\alpha)$ when $p=1$.

    Any non-negative measurable function $w$ on $\zinfn$ is named a weight. For every $1<p<\infty$, we say that a weight $w$ on $\zinfn$ is in the Muckenhoupt class $A_p(\zinfn, \mathfrak{m}_\alpha)$ when
    \[\sup_B \left(\frac{1}{\mathfrak{m}_\alpha(B)} \int_B w(x)d\mathfrak{m}_\alpha(x)\right) \left(\frac{1}{\mathfrak{m}_\alpha(B)}\int_B w(x)^{-\frac{1}{p-1}}d\mathfrak{m}_\alpha(x) \right)^{p-1}<\infty,\]
    where the supremum is taken over all the balls $B$ in $\zinfn$.

    A weight $w$ on $\zinfn$ is said to be in the Muckenhoupt class $A_1(\zinfn, \mathfrak{m}_\alpha)$ when there exists $C>0$ such that, for every ball $B\subset \zinfn$,
    \[\frac{1}{\mathfrak{m}_\alpha(B)} \int_B w(x)d\mathfrak{m}_\alpha(x)\leq C \essinf_{y\in B} w(y).\]
    We also define $A_\infty(\zinfn, \mathfrak{m}_\alpha)=\bigcup_{p\geq 1} A_p(\zinfn, \mathfrak{m}_\alpha)$.

    If $T$ is an $X$-valued Calder\'on--Zygmund operator on $(\zinfn, |\cdot|, \mathfrak{m}_\alpha)$, for every $w\in A_p(\zinfn, \mathfrak{m}_\alpha)$ and $1<p<\infty$, the operator $T$ can be extended from $L^2(\zinfn, \mathfrak{m}_\alpha)\cap L^p(\zinfn,w, \mathfrak{m}_\alpha)$ to $L^p(\zinfn, w,\mathfrak{m}_\alpha)$ as a bounded operator from $L^p(\zinfn,w, \mathfrak{m}_\alpha)$ into $L^p_{X}(\zinfn,w, \mathfrak{m}_\alpha)$ (see, for instance, \cite[Theorem~1.1]{Lo}).

    Rubio de Francia's extrapolation theorem works for spaces of homogeneous type (\cite[Theorem~3.5]{AD}). The arguments in the proof of \cite[Theorem~1.3]{CUFMP} allow us to deduce that if $T$ is an $X$-valued Calder\'on--Zygmund operator on $(\zinfn,|\cdot|, \mathfrak{m}_\alpha)$, $T$ defines a bounded operator from $L^{p(\cdot)}(\zinfn, \mathfrak{m}_\alpha)$ into $L_X^{p(\cdot)}(\zinfn, \mathfrak{m}_\alpha)$, provided that $1<p^-\leq p^+<\infty$ and the $\mathfrak{m}_\alpha$-Hardy--Littlewood maximal function is bounded on $L^{p(\cdot)}(\zinfn, \mathfrak{m}_\alpha)$ (see also \cite[Theorem~4.8]{DR}). We recall that according to \cite[Theorems~1.4~and~1.7]{AHH}, the Hardy--Littlewood maximal operator defined by the measure $\mathfrak{m}_\alpha$ is bounded on $L^{p(\cdot)}(\zinfn, \mathfrak{m}_\alpha)$ provided $1<p^-\leq p^+<\infty$ and $p\in \textup{LH}(\zinfn)$ (see also \cite[Theorem~5.2]{DS2}). We also notice that $T$ is well-defined for $f\in L^{p(\cdot)}(\zinfn, \mathfrak{m}_\alpha)$ thanks to the embedding $L^{p(\cdot)}(\zinfn, \mathfrak{m}_\alpha)\hookrightarrow L^{p^-}(\zinfn, \mathfrak{m}_\alpha)+L^{p^+}(\zinfn, \mathfrak{m}_\alpha)$ (\cite[Theorem~3.3.11]{DHHR}).

    The maximal operators and the Littlewood--Paley function can be studied by using Banach valued operators. Indeed, we can write
    \[P_*^\alpha (f)=\|P_t^\alpha(f)\|_{L^\infty(\zinf)}, \qquad W_*^\alpha (f)=\|W_t^\alpha(f)\|_{L^\infty(\zinf)}\]
    and
    \[g_\alpha^{\beta,k}(f)=\left\|t^{k+\widehat{\beta}}\partial_t^k D_x^\beta P_t^\alpha(f) \right\|_{L^2\left(\zinf,\frac{dt}{t}\right)}.\]

    We define
    \[q_\pm (x,y,s)=\sum_{i=1}^n (x_i^2+y_i^2\pm 2x_i y_is_i),\]
    with $x=(x_1,\dots,x_n), y=(y_1,\dots,y_n)\in \zinfn$ and $s=(s_1,\dots,s_n)\in (-1,1)^n$.
    We split $\zinfn {\times \zinfn\times (-1,1)^n}$ into two parts.  Let $\tau>0$ and let us fix $C_0>0$ whose exact value will be specified later. The local part $L_\tau$ is defined by
    \[L_\tau=\left\{(x,y,s)\in \zzia : \sqrt{q_-(x,y,s)}\leq \frac{C_0\tau}{1+|x|+|y|}\right\}\]
    and the global part $G_\tau$ is given by\label{Gtau}
    \[G_\tau=\zinfn {\times \zinfn\times (-1,1)^n} \setminus L_\tau.\]
    By taking into account the integral representation for the modified Bessel function $I_\nu$, $\nu>-\frac12$ (\cite[(5.10.22)]{Leb}), for every $t>0$, the integral kernel of $W_t^\alpha$ can be written as
    \[W_t^\alpha (x,y)=\frac{1}{(1-e^{-t})^{n+\widehat{\alpha}}}\int_{(-1,1)^n} \exp\left(-\frac{q_-\left(e^{-t/2}x,y,s\right)}{1-e^{-t}}+|y|^2\right) \Pi_\alpha(s) ds,\]
    for $x,y\in\zinfn$, where $\widehat{\alpha}=\sum_{i=1}^n\alpha_i$ and $\Pi_\alpha(s)=\prod_{i=1}^n \frac{\Gamma(\alpha_i+1)}{\Gamma(\alpha_i+1/2)\sqrt{\pi}}(1-{s_i^2})^{\alpha_i-1/2}$ for $s=(s_1,\dots,s_n)\in (-1,1)^n$.

    As in \cite{Sa1}, we consider a smooth function $\varphi$ on $\zzia$ such that $0\leq \varphi\leq 1$,
    \[\varphi(x,y,s)=\begin{cases} 1, & (x,y,s)\in L_1,\\ 0, &(x,y,s)\notin L_2,\end{cases}\]
    and
    \[|\nabla_x \varphi(x,y,s)|+|\nabla_y \varphi(x,y,s)|\leq \frac{C}{q_-(x,y,s)^{1/2}}, \quad x,y\in \zinfn, s\in (-1,1)^n.\]

    We also define, for $x,y\in \zinfn$ and $t>0$,
    \[W_{t,\loc}^\alpha (x,y)=\int_{(-1,1)^n} \frac{\exp\left(-\frac{q_-\left(e^{-t/2}x,y,s\right)}{1-e^{-t}}+|y|^2\right)}{(1-e^{-t})^{n+\widehat{\alpha}}} \Pi_\alpha(s)\varphi(x,y,s) ds\]
    and
    \[W_{t,\glob}^\alpha (x,y)=W_t^\alpha(x,y)-W_{t,\loc}^\alpha (x,y).\]

    Suppose that $T_\alpha$ is one of the operators considered in Theorem~\ref{mainthm}. This operator is defined by using the heat integral kernel $W_t^\alpha(x,y)$. We decompose the operator $T_\alpha$ as
    \[|T_\alpha|\le |T_{\alpha,\loc}|+|T_{\alpha,\glob}|,\]
    where $T_{\alpha,\loc}$ is defined as $T_\alpha$ but replacing $W_t^\alpha(x,y)$ by $W_{t,\loc}^\alpha(x,y)$, and in $T_{\alpha, \glob}$ the kernel $W_t^\alpha(x,y)$ is replaced by $W_{t,\glob}^\alpha(x,y)$.

    We shall prove that both $T_{\alpha,\loc}$ and $T_{\alpha,\glob}$ are bounded on $L^{p(\cdot)}(\zinfn,\mu_\alpha)$ provided that $p$ satisfies the hypotheses imposed on Theorem~\ref{mainthm}.

    In order to prove the $L^{p(\cdot)}$-boundedness of $T_{\alpha,\glob}$, we introduce, for every $\varepsilon\in [0,1)$, a positive measurable function $H_{\alpha,\varepsilon}$ defined on $\zz$ verifying that  the operator $\mathcal{H}_{\alpha,\varepsilon}$ given by
        \[\mathcal H_{\alpha,\varepsilon} (f)(x)=\int_{\zinfn} H_{\alpha,\varepsilon}(x,y)f(y)d\mu_\alpha(y), \quad x\in \zinfn\]
        is bounded on $L^{p(\cdot)}(\zinfn,\mu_\alpha)$. Then, we prove that there exists $\varepsilon\in [0,1)$ for which
    \[\left|T_{\alpha,\glob}f(x)\right|\leq \mathcal H_{\alpha,\varepsilon}(|f|)(x), \quad x\in \zinfn.\]

    Secondly, we prove that $T_{\alpha,\loc}$ is bounded on $L^{p(\cdot)}(\zinfn,\mu_\alpha)$. We consider the following Banach spaces
    \[X(W_*^\alpha)=X(P_*^\alpha)=L^\infty(\zinf),\]
    for every $k\in \mathbb{N}$ and $\beta\in \mathbb{N}^n$ such that $k+\widehat{\beta}>0$,
     \[X\left(g_\alpha^{\beta,k}\right)=L^2\left(\zinf,\frac{dt}{t}\right),\]
    and for every $\beta\in \mathbb{N}\setminus\{0\}$ and every multiplier $m$ of Laplace transform type,
    \[X\left(R_\alpha^\beta\right)=X(T_m)=\mathbb C.\]

    We can write
    \[\left|T_{\alpha,\loc}(f)\right|=\left\|\mathbb T_\alpha (f)\right\|_{X(T_\alpha)}\]
    where, for $x\in \zinfn$,
    \[\mathbb T_\alpha (f)(x)=\int_{\zinfn}\int_{(-1,1)^n} \mathcal{M}_\alpha(x,y,s) \varphi(x,y,s) \Pi_\alpha(s)ds \ f(y)d\mathfrak{m}_\alpha(y).\]
    Here, the function $\mathcal{M}_\alpha : \zzia \rightarrow X(T_\alpha)$ is strongly measurable and the integral is understood in the $X(T_\alpha)$-Bochner sense. We write
    \[\mathbb{M}_\alpha(x,y)=\int_{(-1,1)^n} \mathcal{M}_\alpha(x,y,s) \varphi(x,y,s) \Pi_\alpha(s)ds, \quad x,y\in \zinfn.\]
    Thus, $\mathbb{M}_\alpha : \zz\setminus D\rightarrow X(T_\alpha)$ is strongly measurable. 

    The operator $\mathbb T_\alpha$ is bounded from $L^2(\zinfn,\mathfrak{m}_\alpha)$ into $L^2_{X(T_\alpha)}(\zinfn,\mathfrak{m}_\alpha)$. We prove that $\mathbb T_\alpha$ is an $X(T_\alpha)$-valued Calder\'on--Zygmund operator associated with $\mathbb{M}_\alpha$. Then, according to the above-mentioned arguments, $\mathbb T_\alpha$ defines a bounded operator from $L^{p(\cdot)}(\zinfn,\mathfrak{m}_\alpha)$ into $L^{p(\cdot)}_{X(T_\alpha)}(\zinfn,\mathfrak{m}_\alpha)$. We are going to see that $\mathbb T_\alpha$ is also bounded from $L^{p(\cdot)}(\zinfn,\mu_\alpha)$ into $L^{p(\cdot)}_{X(T_\alpha)}(\zinfn,\mu_\alpha)$. Note that the measure $\mu_\alpha$ is not doubling on $(\zinfn,|\cdot|)$.

    As stated in \cite[Lemma~4]{Sa2}, there exists a sequence $\{x(\ell)\}_{\ell \in \mathbb N}\subset \zinfn$ such that, if we set
    \[B_\ell=\left\{x\in \zinfn: |x-x(\ell)|\leq \frac{1}{2(1+|x(\ell)|)}\right\},\quad \ell \in \mathbb N,\]
    the following properties hold
    \begin{enumerate}[label=(\roman*)]
        \item\label{union} $\zinfn=\bigcup\limits_{\ell \in \mathbb N} B_\ell$;
        \item\label{overlap} for every $\delta>1$, the family $\{\delta B_\ell\}_{\ell\in\mathbb N}$ has bounded overlap;
        \item\label{malfa-mualfa} there exists $C>1$ such that, for every $\ell\in\mathbb N$ and every measurable subset $E$ of $B_\ell$,
        \[\frac 1C e^{-|x(\ell)|^2}\mathfrak{m}_\alpha(E)\leq \mu_\alpha(E)\leq Ce^{-|x(\ell)|^2} {\mathfrak{m}_\alpha(E)}.\]
    \end{enumerate}
    Furthermore, for every $\eta>0$, there exists $\delta>1$ such that, if $\ell\in\mathbb N, \,x\in B_\ell$ and $y\notin \delta B_\ell$, then $(x,y,s)\notin L_\eta$ for each $s\in (-1,1)^n$ (see \cite[Remark~5]{Sa2}).

    We have that
    \[\|\mathbb T_\alpha f\|_{L^{p(\cdot)}_{X(T_\alpha)}(\zinfn,\mu_\alpha)}=\left\|\|\mathbb T_\alpha f\|_{X(T_\alpha)}\right\|_{L^{p(\cdot)}(\zinfn,\mu_\alpha)},\]
    so, according to \cite[Corollary~3.2.14]{DHHR},
    \begin{equation*}
    \|\mathbb T_\alpha f\|_{L^{p(\cdot)}_{X(T_\alpha)}(\zinfn,\mu_\alpha)}\leq 2\sup_{\|F\|_{L^{p'(\cdot)}(\zinfn,\mu_\alpha)}\leq 1}\int_{\zinfn} \|\mathbb T_\alpha f(x)\|_{X(T_\alpha)}|F(x)|d\mu_\alpha(x).
    \end{equation*}
    Here, $p'$ denotes the H\"older conjugate exponent of $p$, i.e., $\frac{1}{p(x)}+\frac{1}{p'(x)}=1$ for every $x\in \zinfn$.

    Fix $F\in L^{p'(\cdot)}(\zinfn,\mu_\alpha)$ with $\|F\|_{L^{p'(\cdot)}(\zinfn,\mu_\alpha)}\leq 1$. By virtue of the properties \ref{union}, \ref{overlap} and \ref{malfa-mualfa}, for certain $\delta>1$ we get
    \begin{align*}
        \int_{\zinfn} &\|\mathbb T_\alpha f(x)\|_{X(T_\alpha)}|F(x)|d\mu_\alpha(x)\\
        &\leq \sum_{\ell\in\mathbb N}\int_{B_\ell} \|\mathbb T_\alpha f(x)\|_{X(T_\alpha)}|F(x)|d\mu_\alpha(x)\\
        &=\sum_{\ell\in\mathbb N}\int_{B_\ell} \|\mathbb T_\alpha\left(f\chi_{\delta B_\ell}\right)(x)\|_{X(T_\alpha)}|F(x)|d\mu_\alpha(x)\\
        &\leq C\sum_{\ell\in\mathbb N} e^{-|x(\ell)|^2}\int_{B_\ell} \|\mathbb T_\alpha\left(f\chi_{\delta B_\ell}\right)(x)\|_{X(T_\alpha)}|F(x)|d\mathfrak{m}_\alpha(x)\\
        &\leq C \sum_{\ell\in\mathbb N} e^{-|x(\ell)|^2} \left\|\mathbb T_\alpha(f\chi_{\delta B_\ell})\right\|_{L^{p(\cdot)}_{X(T_\alpha)}(\zinfn,\mathfrak{m}_\alpha)}\|F\chi_{B_\ell}\|_{L^{p'(\cdot)}(\zinfn,\mathfrak{m}_\alpha)}\\
        &\leq C\sum_{\ell\in\mathbb N} e^{-|x(\ell)|^2} \|f\chi_{\delta B_\ell}\|_{L^{p(\cdot)}(\zinfn,\mathfrak{m}_\alpha)}\|F\chi_{B_\ell}\|_{L^{p'(\cdot)}(\zinfn,\mathfrak{m}_\alpha)}.
    \end{align*}
    We have used H\"older's inequality with variable exponents (see, for instance, \cite[Lemma~3.2.20]{DHHR}).

    Since $p\in \mathcal P^\infty_e(\zinfn)$ and $1<p^-\leq p^+<\infty$, we also have 
    $p'\in \mathcal P^\infty_e(\zinfn)$ with $1<(p')^-\leq (p')^+<\infty$. From \cite[Lemma~2.5]{DS1}, by proceeding as in \cite[(3.12)]{DS1} and the following lines, we get
    \[e^{-|x(\ell)|^2/p_\infty} \|f\chi_{\delta B_\ell}\|_{L^{p(\cdot)}(\zinfn,\mathfrak{m}_\alpha)}\leq \|f\chi_{\delta B_\ell}\|_{L^{p(\cdot)}(\zinfn,\mu_\alpha)}\]
    and
    \[e^{-|x(\ell)|^2/p'_\infty} \|F\chi_{B_\ell}\|_{L^{p'(\cdot)}(\zinfn,\mathfrak{m}_\alpha)}\leq \|F\chi_{B_\ell}\|_{L^{p'(\cdot)}(\zinfn,\mu_\alpha)},\]
    where $p'_\infty$ is the conjugate exponent of $p_\infty$.

    By means of \cite[Corollary~2.8]{DS1}, we obtain
    \begin{align*}
         \int_{\zinfn} \|\mathbb T_\alpha f(x)\|_{X(T_\alpha)}&|F(x)|d\mu_\alpha(x)\leq C\sum_{\ell\in \mathbb N} \|f\chi_{\delta B_\ell}\|_{L^{p(\cdot)}(\zinfn,\mu_\alpha)} \|F\chi_{B_\ell}\|_{L^{p'(\cdot)}(\zinfn,\mu_\alpha)}\\
         &\leq C \sum_{\ell\in \mathbb N} \left\|f\chi_{\delta B_\ell} e^{-|\cdot|^2/p(\cdot)}\prod_{i=1}^n \frac{x_i^{(2\alpha_i+1)/p(\cdot)}}{\Gamma(\alpha_i+1/2)^{1/p(\cdot)}} \right\|_{L^{p(\cdot)}(\zinfn)}\\
         &\qquad\qquad \times \left\|F\chi_{B_\ell}e^{-|\cdot|^2/p'(\cdot)}\prod_{i=1}^n \frac{x_i^{(2\alpha_i+1)/p'(\cdot)}}{\Gamma(\alpha_i+1/2)^{1/p'(\cdot)}}\right\|_{L^{p'(\cdot)}(\zinfn)}\\
         &\leq C\|f\|_{L^{p(\cdot)}(\zinfn,\mu_\alpha)}\|F\|_{L^{p'(\cdot)}(\zinfn,\mu_\alpha)}.
    \end{align*}
    Hence, we conclude that
    \[\|\mathbb T_\alpha f\|_{L^{p(\cdot)}_{X(T_\alpha)}(\zinfn,\mu_\alpha)}\leq C\|f\|_{L^{p(\cdot)}(\zinfn,\mu_\alpha)}.\]

    We have thus proved that the operator $T_{\alpha,\loc}$ is bounded on $L^{p(\cdot)}(\zinfn,\mu_\alpha)$ provided that the exponent function $p$ satisfies the conditions of Theorem~\ref{mainthm}.

    \section{An auxiliary result}\label{sec3: aux op}

    In this section we establish a result that will be useful to prove $L^{p(\cdot)}$-boundedness for the global parts of the operators considered in Theorem~\ref{mainthm}.

    Given $\alpha\in [0,\infty)^n$ and $\varepsilon\in [0,1)$, we define the global operator
    \[\mathcal{H}_{\alpha,\varepsilon}(f)(x)=\int_{\zinfn} H_{\alpha,\varepsilon}(x,y) f(y)d \mathfrak{m}_\alpha(y),\quad x\in \zinfn,\]
    where
    \[H_{\alpha,\epsilon}(x,y)=\int_{(-1,1)^n} H_{\alpha,\varepsilon}(x,y,s) (1-\varphi(x,y,s))\Pi_\alpha(s)ds\]
    and
    \begin{equation}\label{H}
    H_{\alpha,\varepsilon}(x,y,s)=\begin{cases}e^{-(1-\varepsilon)|y|^2}, &\hspace{-0.4em}\sum\limits_{i=1}^n x_i y_i s_i\leq 0,\\
    q_+(x,y,s)^{n+\widehat{\alpha}}e^{-\frac{(1-\varepsilon)}{2}\left(|y|^2-|x|^2+\sqrt{q_+(x,y,s)q_-(x,y,s)}\right)}, &\hspace{-0.4em} \sum\limits_{i=1}^n x_i y_i s_i> 0.
    \end{cases}
    \end{equation}

    \begin{prop}\label{propoaux}
    Let $\alpha\in [0,\infty)^n$. Suppose that $p\in \textup{LH}_0(\zinfn)\cap \mathcal{P}_e^\infty(\zinfn)$ with $1<p^-\leq p^+<\infty$ and let $0<\varepsilon<\frac{1}{(p^-)'}\wedge \frac{1}{n+\widehat{\alpha}}$. Then, the operator $\mathcal{H}_{\alpha,\varepsilon}$ is bounded on $L^{p(\cdot)}(\zinfn,\mu_\alpha)$.
    \end{prop}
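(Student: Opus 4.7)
The plan is to prove the boundedness of $\mathcal{H}_{\alpha,\varepsilon}$ by duality in $L^{p(\cdot)}(\zinfn,\mu_\alpha)$, combined with a careful analysis of the kernel $H_{\alpha,\varepsilon}(x,y)$ according to the two cases in \eqref{H}. By the duality argument used in Section~\ref{sec2: method} (based on \cite[Corollary~3.2.14]{DHHR}), it suffices to show that for every $g\in L^{p'(\cdot)}(\zinfn,\mu_\alpha)$ with $\|g\|_{L^{p'(\cdot)}(\zinfn,\mu_\alpha)}\le 1$ one has
\[
\int_{\zinfn}\mathcal{H}_{\alpha,\varepsilon}(|f|)(x)\,|g(x)|\,d\mu_\alpha(x)\le C\|f\|_{L^{p(\cdot)}(\zinfn,\mu_\alpha)}.
\]
Using $d\mathfrak{m}_\alpha(y)\sim e^{|y|^2}d\mu_\alpha(y)$ and splitting the $s$-integration into $\{u\le 0\}$ and $\{u>0\}$, where $u(x,y,s):=\sum_{i=1}^n x_iy_is_i$, the operator decomposes as $\mathcal{H}_{\alpha,\varepsilon}=\mathcal{H}^{(1)}_{\alpha,\varepsilon}+\mathcal{H}^{(2)}_{\alpha,\varepsilon}$, and the two pieces are treated separately.

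For $\mathcal{H}^{(1)}_{\alpha,\varepsilon}$ (region $u\le 0$) the kernel is bounded by $Ce^{-(1-\varepsilon)|y|^2}$, so after the change of measure one encounters $\int|f(y)|e^{\varepsilon|y|^2}d\mu_\alpha(y)$. The hypothesis $\varepsilon<1/(p^-)'$ yields $\varepsilon p'(y)\le \varepsilon(p^-)'<1$ pointwise, which combined with $d\mu_\alpha\sim e^{-|y|^2}d\mathfrak{m}_\alpha$ shows $e^{\varepsilon|\cdot|^2}\in L^{p'(\cdot)}(\zinfn,\mu_\alpha)$. By the variable-exponent Hölder inequality (\cite[Lemma~3.2.20]{DHHR}) this contribution to $\mathcal{H}_{\alpha,\varepsilon}(|f|)(x)$ is bounded by $C\|f\|_{L^{p(\cdot)}(\zinfn,\mu_\alpha)}$ uniformly in $x$, and since $\mu_\alpha$ is a probability measure the required inequality in $L^{p(\cdot)}(\zinfn,\mu_\alpha)$ is immediate.

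For $\mathcal{H}^{(2)}_{\alpha,\varepsilon}$ (region $u>0$) the kernel is $q_+^{n+\widehat{\alpha}}\exp\!\big(-\tfrac{1-\varepsilon}{2}(|y|^2-|x|^2+\sqrt{q_+q_-})\big)$. The key ingredients are the pointwise lower bound $\sqrt{q_+q_-}\ge \big||x|^2-|y|^2\big|$ (from $q_+q_-=(|x|^2+|y|^2)^2-4u^2$ together with $|u|\le |x||y|$), the cutoff-derived bound $\sqrt{q_-}\ge C_0/(1+|x|+|y|)$ on $\supp(1-\varphi)$, and the absorption $q_+^{n+\widehat{\alpha}}\le C_\delta e^{\delta(|x|^2+|y|^2)}$ for any $\delta>0$. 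Once the $s$-integration against $\Pi_\alpha$ is carried out (which, in the critical regime of $s$ near $\mathbf{1}$, produces additional polynomial decay in $|x|+|y|$ through the Jacobian of the natural change of variables), the resulting kernel is majorized by an operator of Gaussian type to which the comparison between $L^{p(\cdot)}(\mu_\alpha)$ and weighted Lebesgue spaces (\cite[Lemma~2.5 and Corollary~2.8]{DS1}) can be applied in the spirit of Section~\ref{sec2: method}.

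The principal difficulty lies in the $u>0$ region when $|x|\sim|y|$: here the cancellation $|y|^2-|x|^2$ vanishes and the individual lower bounds on $\sqrt{q_+q_-}$ are inconclusive, so the gain must come from the $s$-integration against $\Pi_\alpha$ in combination with the cutoff estimate. The two hypotheses on $\varepsilon$ play distinct roles: $\varepsilon<1/(p^-)'$ ensures the $L^{p'(\cdot)}$-integrability of $e^{\varepsilon|\cdot|^2}$ used in the first region, while $\varepsilon<1/(n+\widehat{\alpha})$ is the threshold that allows the polynomial factor $q_+^{n+\widehat{\alpha}}$ to be absorbed into the surviving Gaussian exponent in the second region without destroying the decay.
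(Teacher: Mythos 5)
Your treatment of $\mathcal{H}^{(1)}_{\alpha,\varepsilon}$ is fine and only a minor variation of the paper's: the paper applies H\"older with the \emph{constant} exponent $p^-$ after writing $1-\varepsilon=\tilde\varepsilon+1/p^-$ and then uses the modular estimate plus homogeneity, whereas you put $e^{\varepsilon|\cdot|^2}$ into $L^{p'(\cdot)}(\zinfn,\mu_\alpha)$ via $\varepsilon p'(y)\le\varepsilon(p^-)'<1$; both work. The problem is $\mathcal{H}^{(2)}_{\alpha,\varepsilon}$, where your proposal states the difficulty rather than resolving it. You correctly record $\sqrt{q_+q_-}\ge\big||x|^2-|y|^2\big|$, the cutoff lower bound on $q_-$, and (implicitly, via \cite[Lemma~2.5]{DS1}) the reduction to a kernel of the form $q_+^{n+\widehat\alpha}\exp(-a_\varepsilon\sqrt{q_+q_-})$ on $G_1\cap\{\sum_i x_iy_is_i>0\}$. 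But the assertion that the $s$-integration against $\Pi_\alpha$ ``produces additional polynomial decay through the Jacobian of the natural change of variables,'' after which one is left with ``an operator of Gaussian type,'' is precisely the step that needs proof, and it cannot be carried out as stated for general $\alpha$. The change of variables you are gesturing at is the identification $q_\pm(x,y,\cos\theta)=|\overline{x}\pm\overline{y}|^2$ coming from spherical (multi-radial) coordinates on $\mathbb{R}^{k_1}\times\cdots\times\mathbb{R}^{k_n}$, and it is only available when $\alpha_j=k_j/2-1$ for integers $k_j>1$. For those special $\alpha$ the resulting Euclidean operators still require real work: one must split into $2|\overline{x}-\overline{y}|\ge|\overline{x}+\overline{y}|$ (controlled by $\mhl$) and $2|\overline{x}-\overline{y}|<|\overline{x}+\overline{y}|$ (a kernel $|\overline{x}|^{\widehat k}e^{-c|\overline{x}||\overline{x}-\overline{y}|}$ on $|\overline{x}-\overline{y}|>c/|\overline{x}|$, handled by the specific estimates of \cite[pp.~417--418]{DS1}), and one needs the boundedness of $\mhl$ on $L^{\overline{p}(\cdot)}(\mathbb{R}^{\widehat k})$ for the lifted exponent $\overline{p}$ (the paper's Lemma~\ref{lem:pbarra}). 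None of this is in your proposal, and your fallback ``comparison between $L^{p(\cdot)}(\mu_\alpha)$ and weighted Lebesgue spaces in the spirit of Section~\ref{sec2: method}'' does not apply: that section's machinery is Calder\'on--Zygmund theory for the \emph{local} part, while here the kernel lives entirely on the global region.

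The second, equally serious omission is the passage from the discrete set $\alpha_j=k_j/2-1$ to arbitrary $\alpha\in[0,\infty)^n$. The paper does this by Stein complex interpolation for an analytic family $\mathbb{H}^{(2)}_{z,\varepsilon}$ of admissible growth, invoking a variable-exponent interpolation theorem (\cite[Theorem~1]{MRA}) together with an $n$-dimensional Three Lines Theorem (Theorem~\ref{3lt}); this is a substantial component of the argument with no counterpart in your proposal. Finally, your reading of the hypotheses is off: in this proposition only $\varepsilon<1/(p^-)'$ is used (to guarantee $a_\varepsilon=\frac{1-\varepsilon}{2}-\big|\frac{1}{p_\infty}-\frac{1-\varepsilon}{2}\big|>0$ and to handle $\mathcal{H}^{(1)}$); the condition $\varepsilon<\frac{1}{n+\widehat\alpha}$ is not the mechanism that ``absorbs'' $q_+^{n+\widehat\alpha}$ here --- that factor is kept and dominated inside the Euclidean operators above --- but is imposed so that the domination of the Riesz-transform global kernels by $H_{\alpha,\varepsilon}$ in Section~\ref{sec5: riesz} goes through. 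Moreover, bounding $q_+^{n+\widehat\alpha}\le C_\delta e^{\delta(|x|^2+|y|^2)}$, as you suggest, destroys the cancellation $e^{\frac{|y|^2}{p(y)}-\frac{|x|^2}{p(x)}}\sim e^{(|y|^2-|x|^2)/p_\infty}$ on which the whole estimate rests, since $\delta(|x|^2+|y|^2)$ is not controlled by $a_\varepsilon\sqrt{q_+q_-}\ge a_\varepsilon\big||x|^2-|y|^2\big|$ when $|x|\sim|y|$.
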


    \begin{proof} 
    We decompose $\mathcal{H}_{\alpha,\varepsilon}(f)=\mathcal{H}_{\alpha,\varepsilon}^{(1)}(f)+\mathcal{H}_{\alpha,\varepsilon}^{(2)}(f)$, where
    \[\mathcal{H}_{\alpha,\varepsilon}^{(1)}(f)(x)=\int_{E_x} H_{\alpha,\varepsilon}(x,y,s) (1-\varphi(x,y,s))\Pi_\alpha(s)ds f(y)d \mathfrak{m}_\alpha(y),\]
    and
    \[\mathcal{H}_{\alpha,\varepsilon}^{(2)}(f)(x)=\int_{F_x} H_{\alpha,\varepsilon}(x,y,s) (1-\varphi(x,y,s))\Pi_\alpha(s)ds f(y)d \mathfrak{m}_\alpha(y),\]
    being
    \begin{align*}
        E_x&=\left\{(y,s)\in \zia :  \sum_{i=1}^n x_i y_i s_i\leq 0\right\},\\
        F_x&=\left\{(y,s)\in \zia :  \sum_{i=1}^n x_i y_i s_i>0\right\}.
    \end{align*}

    Let $f\in L^{p(\cdot)}(\zinfn, \mu_\alpha)$ be given such that $\|f\|_{L^{p(\cdot)}(\zinfn,\mu_\alpha)}\leq 1$. For $x\in \zinfn$, we have that
    \begin{align*}
        \left|\mathcal{H}_{\alpha,\varepsilon}^{(1)}(f)(x)\right|&\leq \int_{\zinfn} e^{-(1-\varepsilon)|y|^2} |f(y)|\int_{(-1,1)^n} |1-\varphi(x,y,s)|\Pi_\alpha(s) ds d\mathfrak{m}_\alpha(y)\\
        &\leq C\int_{\zinfn} e^{-(1-\varepsilon)|y|^2} |f(y)| d\mathfrak{m}_\alpha(y).
    \end{align*}
    Since $\varepsilon<1/(p^-)'$, we can write $1-\varepsilon=\tilde{\varepsilon}+1/p^-$ with $\tilde{\varepsilon}>0$. Thus, by H\"older's inequality with $p^->1$ we have
    \begin{align*}
        &\left|\mathcal{H}_{\alpha,\varepsilon}^{(1)}(f)(x)\right|\\
        &\quad\leq C\int_{\zinfn} e^{-\left(\tilde{\varepsilon}+\frac{1}{p^-}\right)|y|^2} |f(y)| d\mathfrak{m}_\alpha(y)\\
        &\quad\leq C\left(\int_{\zinfn} e^{-|y|^2} |f(y)|^{p^-} d\mathfrak{m}_\alpha(y)\right)^{1/p^-}\left(\int_{\zinfn} e^{-\tilde{\varepsilon}(p^-)'|y|^2} d\mathfrak{m}_\alpha(y)\right)^{1/(p^-)'}\\
        &\quad\leq C\left(\int_{\zinfn\cap \{|f|>1\}} |f(y)|^{p(y)} d\mu_\alpha(y)+\int_{\zinfn\cap \{|f|\leq 1\}} d\mu_\alpha(y)\right)^{1/p^-}\leq C,
    \end{align*}
    since $\int_{\zinfn} |f(y)|^{p(y)}d\mu_\alpha(y)\leq 1$ and $\mu_\alpha$ is a probability measure on $\zinfn$. 
    
    Therefore, by the homogeneity of the norm,
    \begin{equation*}
        \left\|\mathcal{H}_{\alpha,\varepsilon}^{(1)} (f)\right\|_{L^{p(\cdot)}(\zinfn, \mu_\alpha)}
        \leq C
        \|f\|_{L^{p(\cdot)}(\zinfn, \mu_\alpha)}.
    \end{equation*}
    for any $f\in L^{p(\cdot)}(\zinfn,\mu_\alpha)$.

    We now study $\mathcal{H}_{\alpha,\varepsilon}^{(2)}$. We have that
    \begin{equation*}
    \begin{split}
        &\int_{\zinfn} |\mathcal{H}_{\alpha,\varepsilon}^{(2)}(f)(x)|^{p(x)}d\mu_\alpha(x) \leq
        C \int_{\zinfn} \left(
        \int_{F_x} |f(y)| e^{\frac{-|y|^2}{p(y)}}e^{\frac{|y|^2}{p(y)} - \frac{|x|^2}{p(x)}} \right.
        \\ & \;\times \left.
        q_+(x,y,s)^{n+\widehat{\alpha}}
        e^{-\frac{(1-\varepsilon)}{2}(|y|^2-|x|^2+\sqrt{q_+(x,y,s)q_-(x,y,s)})}\Pi_\alpha(s)ds d\mathfrak{m}_\alpha(y)
        \vphantom{\int}\right)^{p(x)}d\mathfrak{m}_\alpha(x).
    \end{split}
    \end{equation*}
    Note that we can write
    \begin{align*}
            q_+(x,y,s)& q_-(x,y,s)\\
            &
            = \left( |x|^2 + |y|^2 + 2\sum_{i=1}^n x_i y_i s_i
            \right)\left( |x|^2 + |y|^2 - 2\sum_{i=1}^n x_i y_i s_i
            \right)
            \\ & =
            \left( |x|^2 + |y|^2\right)^2
            - 4\left(\sum_{i=1}^n x_i y_i s_i\right)^2
            \\ & =
            |x|^4 + |y|^4 + 2|x|^2 |y|^2
            -4\left(\sum_{i=1}^n x_i y_i s_i\right)^2
            \\ & =
            \left( |x|^2 - |y|^2 \right)^2
            + 4\left( |x|^2 |y|^2 -
            \left(\sum_{i=1}^n x_i y_i s_i\right)^2\right)
            \\ & \geq
            \left( |x|^2 - |y|^2 \right)^2
            + 4\left( |x|^2 |y|^2 - \left|
            \langle (x_1,\dots,x_n),(s_1y_1,\dots,s_n y_n)\rangle
            \right|^2\right)
            \\ & \geq
            \left( |x|^2 - |y|^2 \right)^2
            + 4\left( |x|^2 |y|^2 - |x|^2 |(s_1 y_1, \dots, s_n y_n)|^{2}\right)
            \\ & \geq
            \left( |x|^2 - |y|^2 \right)^2,
        \end{align*}
    for each $x=(x_1,\ldots,x_n),\,y=(y_1,\ldots,y_n)\in \zinfn$ and $s=(s_1,\ldots,s_n)\in (-1,1)^n$.

    On the other hand, according to \cite[Lemma~2.5]{DS1}, since $p\in \mathcal{P}_e^\infty(\zinfn)$ then
    \begin{equation*}
        e^{\frac{|y|^2}{p(y)} - \frac{|x|^2}{p(x)}} \sim
        e^{\frac{|y|^2 - |x|^2}{p_\infty}},
        \quad x,y\in \zinfn.
    \end{equation*}
    Here $p_\infty>1$. Whence, it follows that
    \begin{align*}
            & q_+(x,y,s)^{n+\widehat{\alpha}}
            \exp\left(-\tfrac{(1-\varepsilon)}{2}(|y|^2-|x|^2+\sqrt{q_+(x,y,s)q_-(x,y,s)}\right)
            e^{\frac{|y|^2}{p(y)} - \frac{|x|^2}{p(x)}}
            \\ & \; \leq C
            q_+(x,y,s)^{n+\widehat{\alpha}}
            \exp\left(\left(\tfrac{1}{p_\infty} - \tfrac{1-\varepsilon}{2}\right)
            \left(|y|^2-|x|^2\right)
            -\tfrac{(1-\varepsilon)}{2}\sqrt{q_+(x,y,s)q_-(x,y,s)}\right)
            \\ & \; \leq
            C \left(q_+(x,y,s)
            \right)^{n+\widehat{\alpha}}
            \exp\left(
            -a_\varepsilon \sqrt{q_+(x,y,s)q_-(x,y,s)}
            \right),
        \end{align*}
      for every $x=(x_1,\ldots,x_n), \,y=(y_1,\ldots,y_n)\in \zinfn$ such that $(x,y,s)\in G_1$ and $\sum_{i=1}^n x_i y_i s_i\geq 0$. We recall that \[G_1=\left\{(x,y,s)\in \zzia : \sqrt{q_-(x,y,s)}\geq \frac{C_0}{1+|x|+|y|}\right\}.\] Above we have set $a_\varepsilon= \frac{1-\varepsilon}{2}- \left|\frac{1}{p_\infty} - \frac{1-\varepsilon}{2}\right|$. Note that $a_{\varepsilon}>0$ because $\varepsilon<1/(p^-)'$ and $(p^-)'=(p')^+\geq p'_\infty$. 
      
      We get
  \begin{align*}
            \int_{\zinfn} &  \left| \mathcal{H}_{\alpha,\varepsilon}^{(2)}(f)(x)\right|^{p(x)} d\mu_\alpha(x)
            \\ & \leq C \int_{\zinfn} \left(
            \int_{F_x}|f(y)| e^{\frac{-|y|^2}{p(y)}} |1 - \varphi(x,y,s)|
            q_+(x,y,s)^{n+\widehat{\alpha}}
            \right. \\ & \left. \quad \times
            \exp\left(-a_\varepsilon \sqrt{q_+(x,y,s)q_-(x,y,s)}
            \right) \Pi_\alpha(s)ds d\mathfrak{m}_\alpha(y)
            \right)^{p(x)} d\mathfrak{m}_\alpha(x).
        \end{align*}

    In order to complete the study of $\mathcal{H}_{\alpha,\varepsilon}^{(2)}$ we use Stein complex interpolation.
    We consider firstly $n = 1$. For every $z\in \mathbb{C}$ with $\Real(z) > -\frac12$, we define the operator $\mathbb{H}_{z,\varepsilon}^{(2)}$ by
    \begin{align*}
            \mathbb{H}_{z,\varepsilon}^{(2)}(h)(x)
            & =
            \int_0^{\infty} K_{z,\varepsilon}^{(2)}(x,y) h(y) y^{2z+1} dy \ x^{\frac{2z+1}{p(x)}}
            \\ & = \widehat{\mathcal{H}}_{z,\varepsilon}^{(2)}(h)(x)\  x^{\frac{2z+1}{p(x)}}, \quad x\in \zinf,
        \end{align*}
    where
    \begin{align*}
            K_{z,\varepsilon}^{(2)}(x,y)
            & =
            \int_{-1}^1 \chi_{F_x}(y,s) (1-\varphi(x,y,s))
            (q_+(x,y,s))^{z+1}
            \\ & \quad \times \exp\left(
            -a_\varepsilon\sqrt{q_+(x,y,s)q_-(x,y,s)}\right) (1-s^2)^{z-\frac12}ds, \quad x,y\in \zinf,
        \end{align*}
    and $a_\varepsilon$ is as above.

    For every $z\in \mathbb{C}$ with $\Real(z)>-\frac12$ and every simple function $h$ defined on $(\zinf, dx)$, $\mathbb{H}_{z,\varepsilon}^{(2)}(h)$ is a measurable function on $(\zinf,dx)$.

    Assume that $r$, $y$, $c_1$, $c_2>0$, $b_1$, $b_2$, $m_1$ and $m_2$ are positive bounded measurable functions on $\zinf$, and $A_1$ and $A_2$ are two measurable subsets of $\zinf$ with finite Lebesgue measure. We define
    \begin{equation*}
            F_{y,r}(z) = \int_{B(y,r)} {\mathbb{H}_{z,\varepsilon}^{(2)}}
            \left(c_1^{m_1(\cdot)z+b_1(\cdot)} \chi_{A_1}(\cdot)
            \right)(x)
            c_2^{m_2(x)z+b_2(x)} \chi_{A_2}(x) dx,
    \end{equation*}
    for $z\in \mathbb{C}$, $\Real(z)>-\frac12$. The function $F_{y,r}$ is analytic on $\Omega=\left\{z\in \mathbb C: \Real(z)>-\frac12\right\}$. Furthermore, for every $-\frac12<c<d<\infty$,
    \begin{equation*}
        \sup_{c\leq \Real(z) \leq d} |F_{y,r}(z)| <\infty.
    \end{equation*}
    Thus, the family $\left\{\mathbb{H}_{z,\varepsilon}^{(2)}\right\}_{z\in\Omega}$ is an analytic family of admissible growth in every strip $\{z\in\mathbb{C}:c<\Real(z)<d\}$, with $-\frac12<c<d<\infty$ (see \cite[\S3]{MRA}).

    Let $k\in\mathbb{N}$, $k>1$. We take $\alpha = \frac{k}{2} - 1$. For every $\overline{x}\in\mathbb{R}^k$ we write $x = |\overline{x}|$.
    If $\overline{x}$, $\overline{y}\in \mathbb{R}^k$ and $\theta$ is the angle between $\overline{x}$ and $\overline{y}$, we have that
    \begin{equation*}
    |\overline{x} \pm \overline{y}|^2
    = q_{\pm}(x,y,\cos(\theta)),
    \end{equation*}
    and also that $(x,y,\cos(\theta))\in L_1$ if and only if $|\overline{x}-\overline{y}|<C_0/(1+x+y)$.
    By integrating in spherical coordinates on $\mathbb{R}^k$ and by performing the change of variable $s = \cos(\theta)$ we obtain
    \begin{align*}
            \left| \mathbb{H}_{\alpha,\varepsilon}^{(2)}(h)(x) \right|
            & \leq
            C x^{\frac{k-1}{p(x)}}\,\int_{|\overline{x} - \overline{y}|>\frac{C_0}{1+x+y}}
            |\overline{x} + \overline{y}|^k
            e^{-a_\varepsilon|\overline{x}-\overline{y}||\overline{x}+\overline{y}|} |{h}(y)|d\overline{y}  ,
        \end{align*}
    for $x = |\overline{x}|\in\zinf$. We consider the operators
    \begin{equation*}
        \begin{split}
            T_1(h)(\overline{x})
            &  =
            \int_{\substack{|\overline{x} - \overline{y}|>\frac{C_0}{1+x+y}\\ 2|\overline{x} - \overline{y}| \geq |\overline{x} + \overline{y}| }}
            |\overline{x} + \overline{y}|^k
            e^{-a_\varepsilon|\overline{x}-\overline{y}||\overline{x}+\overline{y}|} h(\overline{y}) d\overline{y},
        \end{split}
    \end{equation*}
    and
    \begin{equation*}
        \begin{split}
            T_2(h)(\overline{x})
            & =
            \int_{\substack{|\overline{x} - \overline{y}|>\frac{C_0}{1+x+y}\\
            2|\overline{x} - \overline{y}| <|\overline{x} + \overline{y}|
            }}
            |\overline{x} + \overline{y}|^k
            e^{-a_\varepsilon|\overline{x}-\overline{y}||\overline{x}+\overline{y}|} h(\overline{y})d\overline{y},
        \end{split}
    \end{equation*}
    for $\overline{x}\in \mathbb{R}^k$. We are going to see that $T_1$ and $T_2$ are bounded on $L^{\overline{p}(\cdot)}(\mathbb{R}^k,dx)$, where $\overline{p}(\overline{x}) = p(|\overline{x}|)$, $\overline{x}\in \mathbb{R}^k$.

    Note firstly that
    \begin{equation*}
        \begin{split}
            |T_1(h)(\overline{x})|
            & \leq C \left(
            \int_{B({-\overline{x}},1)}|h(\overline{y})| d\overline{y} +
            \sum_{\ell = 1}^{\infty} \int_{\ell\leq |\overline{x}{+}\overline{y}|<\ell +1}
            e^{-c|\overline{x}{+}\overline{y}|^ 2}
            |h(\overline{y})| d\overline{y}
            \right)
            \\ & \leq C \sum_{\ell=0}^\infty e^{-c\ell^2}\int_{B(-\overline{x},\ell +1)}|h(\overline{y})| d\overline{y}
            \\ & \leq C
            \mhl(h)({-\overline{x}}), \quad
            \overline{x}\in\mathbb{R}^k.
        \end{split}
    \end{equation*}
    Here, $\mhl$ represents the Hardy--Littlewood maximal function in $\mathbb{R}^ k$.

    On the other hand, according to \cite[(16)~and~(17)]{GMMST}, if $2|\overline{x} - \overline{y}| < |\overline{x} + \overline{y}|$, then $|\overline{y}|\leq 3|\overline{x}|$ and $\frac43|\overline{x}| \leq |\overline{x} + \overline{y}| \leq 4|\overline{x}|$. We obtain
    \begin{equation*}
        \begin{split}
            |T_2(h)(\overline{x}) |
            & \leq
            C \int_{|\overline{x} - \overline{y}|>\frac{C_0}{1+4x}}
            |\overline{x}|^k e^{-c|\overline{x}||\overline{x} - \overline{y}|} |h(\overline{y})| d\overline{y}
            \\ & \leq
           \begin{cases}
            \int_{|\overline{x} - \overline{y}|\leq 4} |h(\overline{y})| d\overline{y} \leq C \mhl(h)({\overline{x}})
            & \text{if}\;\;  |\overline{x}|\leq 1,\\
            \int_{|\overline{x} - \overline{y}|> C_0/(5|\overline{x}|)} |\overline{x}|^k e^{-c|\overline{x}||\overline{x} - \overline{y}|} |h(\overline{y})| d\overline{y}
            & \text{if}\;\; |\overline{x}|> 1.
        \end{cases}
        \end{split}
    \end{equation*}

    Since {$\overline{p}(\overline{x})=\overline{p}(-\overline{x})$, and} under the imposed conditions for $p(\cdot)$, $\mhl$ is bounded on $L^{\overline{p}(\cdot)}(\mathbb{R}^k,dx)$ {(see Lemma~\ref{lem:pbarra} for $n=1$)}, the arguments developed in \cite[pp.~{417~and~418}]{DS1} allow us to conclude that $T_1$ and $T_2$ are bounded on $L^{\overline{p}(\cdot)}(\mathbb{R}^k,dx)$. 

    We have, therefore, that the operator $T{:=T_1+T_2}$ is bounded on $L^{\overline{p}(\cdot)}(\mathbb{R}^k,dx)$.

    Since
    \begin{equation*}
        \left|\mathbb{H}_{\alpha,\varepsilon}^{(2)}(h)(x)\right| \leq C x^{\frac{k-1}{p(x)}}
        T(|\widetilde{h}|)(\overline{x}), \quad x = |\overline{x}|, \quad x\in\mathbb{R}^k,
    \end{equation*}
    we get
    \begin{equation*}
        \begin{split}
            \int_0^\infty \left|
            \mathbb{H}_{\alpha,\varepsilon}^{(2)}(h)(x)\right|^{p(x)}dx
            & =
            \int_0^\infty \left|\widehat{\mathcal{H}}_{\alpha,\varepsilon}^{(2)}(h)(x)
            \right|^{p(x)} x^{k-1}dx \leq C
            \int_{\mathbb{R}^k}
            \left|T({|\widetilde{h}|})\left(|\overline{x}|\right)
            \right|^{\overline{p}(\overline{x})} d\overline{x},
        \end{split}
    \end{equation*}
    where ${\widetilde{h}}(\overline{y}) = {h}(|\overline{y}|)$, $\overline{y}\in \mathbb{R}^k$. Hence
    \begin{equation*}
            \left\|\mathbb{H}_{\alpha,\varepsilon}^{(2)}(h)\right\|_{L^{p(\cdot)}(\zinf,dx)} \leq C
            \left\|T\left(|\widetilde{h}|\right)\right\|_{L^{\overline{p}(\cdot)}(\mathbb{R}^k,dx)}\leq C \left\|{\widetilde{h}}\right\|_{L^{\overline{p}(\cdot)}(\mathbb{R}^k,dx)}.
    \end{equation*}

    Naming $h_k(u)=h(u)u^{\frac{k-1}{p(u)}}$, $u\in \zinf$, we also have 
    \begin{equation*}
        \begin{split}
            \int_{\mathbb{R}^k}  \left|
            {\widetilde{h}}(\overline{x})
            \right|^{\overline{p}(x)}d\overline{x}
            & =
            \int_{\mathbb{R}^k}  \left|
            {h}(|\overline{x}|)\right|^{p(|\overline{x}|)}d\overline{x} = C
            \int_0^\infty
            |{h}(x)|^{p(x)} x^{k-1} dx
            \\ & = C
            \int_0^\infty
            \left|
            {h}(x)x^{\frac{k-1}{p(x)}}
            \right|^{p(x)}  dx{=C\int_0^\infty |h_k(x)|^{p(x)}dx,}
        \end{split}
    \end{equation*}
    {which yields $\|\widetilde{h}\|_{L^{\overline{p}(\cdot)}(\mathbb{R}^k,dx)}\leq C \|h_k\|_{L^{p(\cdot)}(\zinf,dx)}.$}
    We conclude that
    \begin{equation*}
        \left\|\mathbb{H}_{\alpha,\varepsilon}^{(2)}(h)\right\|_{L^{p(\cdot)}(\zinf,dx)}
        \leq C
        \left\|{h_k}\right\|_{L^{p(\cdot)}(\zinf,dx)}.
    \end{equation*}

    We now consider, for every $z\in\mathbb{C}$ with $\Real(z)>-\frac12$,
    \begin{equation*}
        \mathcal{C}_{z,\varepsilon}(f)(x) =
        \mathcal{H}_{z,\varepsilon}^{(2)} ({f_z})(x), \quad x\in\zinf,
    \end{equation*}
    {where $f_z(y)=f(y)y^{-\frac{2z+1}{p(y)}}$, $y\in\zinf$.}

     The family $\{\mathcal{C}_{z,\varepsilon}\}_{\Real(z)>-\frac12}$ is an analytic family of admissible growth in every strip $\{z\in\mathbb{C}:c<\Real(z)<d\}$ with $-\frac12<c<d<\infty$  (\cite[\S3]{MRA}). For every $k\in\mathbb{N}$, $k>1$, we have that
    \begin{equation*}
        \left\| \mathcal{C}_{\frac{k}{2}-1,\varepsilon}(f)
        \right\|_{L^{p(\cdot)}(\zinf,dx)}
        \leq C_0 \left\| f
        \right\|_{L^{p(\cdot)}(\zinf,dx)}
    \end{equation*}
    and{, for each $t\in \mathbb{R}$,}
    \begin{equation*}
        \left\| \mathcal{C}_{\frac{k}{2}-1+it,\varepsilon}(f)
        \right\|_{L^{p(\cdot)}(\zinf,dx)}
        \leq  \left\| \mathcal{C}_{\frac{k}{2}-1,\varepsilon}(|f|)
        \right\|_{L^{p(\cdot)}(\zinf,dx)}\leq C_0 \left\| f
        \right\|_{L^{p(\cdot)}(\zinf,dx)}.
    \end{equation*}

    According to \cite[Theorem~1]{MRA}, 
    for every $\alpha\geq 0$,
    \begin{equation*}
        \left\| \mathcal{C}_{\alpha,\varepsilon}(f)
        \right\|_{L^{p(\cdot)}(\zinf,dx)}
        \leq C_\alpha \left\| f
        \right\|_{L^{p(\cdot)}(\zinf,dx)}.
    \end{equation*}
    It follows that, for every $\alpha\geq 0$,
    \begin{equation*}
        \begin{split}
            \int_0^\infty \left|
            \mathcal{H}_{\alpha,\varepsilon}^{(2)}(f)(x)
            \right|^{p(x)} d\mu_\alpha (x)
            & \leq C
            \int_0^\infty \left|
            \mathbb{H}_{\alpha,\varepsilon}^{(2)}
            \left(f(\cdot)e^{-\frac{|\cdot|^2}{p(\cdot)}}\right)(x)
            \right|^{p(x)} dx
            \\ & = C
            \int_0^\infty \left|
            \mathcal{C}_{\alpha,\varepsilon}
            \left(f(\cdot)e^{-\frac{|\cdot|^2}{p(\cdot)}} (\cdot)^{\frac{2\alpha+1}{p(y)}}\right)(x)
            \right|^{p(x)} dx.
        \end{split}
    \end{equation*}
    Then
    \begin{equation*}
        \begin{split}
            \left\| \mathcal{H}_{\alpha,\varepsilon}^{(2)}(f) \right\|_{L^{p(\cdot)}(\zinf,\mu_\alpha)}
            & \leq C
            \left\| \mathcal{C}_{\alpha,\varepsilon}
            \left(f(\cdot)e^{-\frac{|\cdot|^2}{p(\cdot)}} (\cdot)^{\frac{2\alpha+1}{p(\cdot)}}\right) \right\|_{L^{p(\cdot)}(\zinf,dx)}
            \\ & \leq C
            \left\| f(\cdot)e^{-\frac{|\cdot|^2}{p(\cdot)}} (\cdot)^{\frac{2\alpha+1}{p(\cdot)}}
            \right\|_{L^{p(\cdot)}(\zinf,dx)}
            \\ & \leq C
            \left\| f
            \right\|_{L^{p(\cdot)}(\zinf,\mu_\alpha)}.
        \end{split}
    \end{equation*}
    We conclude that the operator $\mathcal{H}_{\alpha, \varepsilon}$ is bounded on $L^{p(\cdot)}(\zinf,\mu_\alpha)$.

    We now prove that $\mathcal{H}_{\alpha, \varepsilon}$ is bounded on $L^{p(\cdot)}(\zinfn,\mu_\alpha)$ when the dimension $n$ is greater than one.

    Let $n\in\mathbb{N}$, $n>1$. We define
    \begin{equation*}
        \mathbb{H}_{z,\varepsilon}^{(2)}(h)(x) =
        \int_{\zinfn} K_{z,\varepsilon}^{(2)}(x,y)h(y) \prod_{j=1}^n
        y_j^{2z_j+1} dy \prod_{j=1}^n x_j^{\frac{2z_j+1}{p(x)}},
    \end{equation*}
    for $x = (x_1,\dots,x_n)\in\zinfn$ and $z = (z_1,\dots,z_n)\in\mathbb{C}^n$ with $\Real(z_j)>-\frac12$ for each $j=1,\dots,n$, where
    \begin{equation*}
    \begin{split}
        K_{z,\varepsilon}^{(2)}(x,y) & =
        \int_{(-1,1)^n} \chi_{F_x}(y,s)(1-\varphi(x,y,s)) q_+(x,y,s)^{n+\widehat{z}}
        \\ & \qquad \times
        \exp\left(-a_\varepsilon  \sqrt{q_+(x,y,s)q_-(x,y,s)}
        \right)
        \prod_{j=1}^n (1-s_j^2)^{z_j - 1/2}   ds,
    \end{split}
    \end{equation*}
    for $x, y\in\zinfn$, $z$ and $a_\varepsilon$ as before.

    Let $k = (k_1,\dots,k_n)\in \mathbb{N}^n$, $k_j>1$, $j=1,\dots,n$. We consider $\alpha_j = k_j/2 - 1$, $j=1,\dots,n$, and $\alpha= (\alpha_1,\dots,\alpha_n)$. We have that
    \begin{equation*}
        \mathbb{H}_{\alpha,\varepsilon}^{(2)}(h)(x) =
        \int_{\zinfn} K_{\alpha,\varepsilon}^{(2)}(x,y)h(y) \prod_{j=1}^n
        y_j^{k_j-1} dy \prod_{j=1}^n x_j^{\frac{k_j-1}{p(x)}},
    \end{equation*}
    for $x = (x_1,\dots,x_n)\in\zinfn$, and
    \begin{equation*}
    \begin{split}
        K_{\alpha,\varepsilon}^{(2)}(x,y) & =
        \int_{(-1,1)^n} \chi_{F_x}(y,s)(1-\varphi(x,y,s)) q_+(x,y,s)^{\widehat{k}/2}
        \\ & \quad \times
        \exp\left(-a_\varepsilon  \sqrt{q_+(x,y,s)q_-(x,y,s)}
        \right) \prod_{j=1}^n (1-s_j^2)^{\alpha_j - 1/2} ds,
    \end{split}
    \end{equation*}
    for $x, y\in\zinfn$. We define $\overline{p}(\overline{x_1},\dots,\overline{x_n}) = p(x_1,\dots,x_n)$, where $x_j = |\overline{x_j}|$, $\overline{x_j}\in {\zinf^{k_j}}$, $j=1,\dots,n$. Integrating in multi-radial polar coordinates we have that
    \begin{equation*}
        \left|\mathbb{H}_{\alpha,\varepsilon}^{(2)}(h)(x)\right|
        \leq C
        \int_{|\overline{x}-\overline{y}|>\frac{C_0}{1+|\overline{x}| + \overline{y}|}}
        |\overline{x}+\overline{y}|^{{\widehat{k}}}
        e^{-a_\varepsilon|\overline{x}-\overline{y}||\overline{x}+\overline{y}|}
        |h(|\overline{y_1}|,\dots,|\overline{y_n}|)| d\overline{y} \prod_{j=1}^n x_j^{\frac{k_j-1}{p(x)}},
    \end{equation*}
    for $x = (x_1,\dots,x_n){=(|\overline{x_1}|, \dots, |\overline{x_n}|)}\in\zinfn$ and $\overline{x} = (\overline{x_1},\dots,\overline{x_n})\in \prod_{j=1}^n \mathbb R^{k_j} = \mathbb R^{{\widehat{k}}}$.

    We now proceed as in the above one-dimensional case. In order to do this, {notice that if we define $\overline{p}$ by $\overline{p}(\overline{x}) = p(|\overline{x_1}|,\dots,|\overline{x_n}|)$, for $\overline{x} = (\overline{x_1},\dots,\overline{x_n})\in \zinf^{\widehat{k}}$, then $\overline{p}$ belongs to $\textup{LH}(\zinf^{\widehat{k}})$, with $1<\overline{p}^-\leq \overline{p}^+<\infty$, by virtue of Lemma~\ref{lem:pbarra}. Hence, the Hardy--Littlewood maximal operator $\mhl$ on $\zinf^{\widehat{k}}$ is bounded on $L^{\overline{p}(\cdot)}\left(\zinf^{\widehat{k}}\right)$.}

    We consider, for every $z = (z_1,\dots,z_n)\in \mathbb{C}^n$ such that $\Real(z_j)>-\frac12$, for each ${j=1,\dots,n}$, the operator
    \begin{equation*}
        \mathcal{C}_{z,\varepsilon}(f)(x) =
        \mathbb{H}_{z,\varepsilon}^{(2)}(f_z)(x),\quad x\in\zinfn,
    \end{equation*}
    where $f_z(y)=f(y) \prod_{j=1}^n y_j^{-\frac{2z_j+1}{p(y)}}$ for $y=(y_1,\dots,y_n)\in \zinfn$.
    The proof can be concluded as in the one-dimensional case by using an $n$-dimensional version of the Stein complex interpolation with variable exponent.
    This result can be proved by proceeding as in the proof of \cite[Theorem~1]{MRA} and by using an $n$-dimensional version of the Three Lines Theorem (see Theorem~\ref{3lt} and \cite[Proposition~21]{An}).
    \end{proof}

    \section{Proof of Theorem~\ref{mainthm} for maximal operators}\label{sec4: max op}

    According to the subordination formula \eqref{Ptalfa}, since $\frac{t}{2\sqrt{\pi}}\int_0^\infty \frac{e^{-t^2/(4u)}}{u^{3/2}}du=1$ for each $t>0$, we deduce that
    \[P_*^\alpha(f)(x)\leq W_*^\alpha(f)(x), \quad x\in \zinfn.\]
    Hence, it suffices to see that $W_*^\alpha$ is bounded on $L^{p(\cdot)}(\zinfn,\mu_\alpha)$.

    We firstly study its global part $W_{*,\glob}^\alpha$ given, for $x\in \zinfn$, by 
    \begin{align*}
        &W_{*,\glob}^\alpha(f)(x)\\
        &=\sup_{t>0}\left| \int_{\zinfn}\int_{(-1,1)^n} \frac{e^{-\frac{q_-\left(e^{-t/2}x,y,s\right)}{1-e^{-t}}+|y|^2}}{(1-e^{-t})^{n+\widehat{\alpha}}} (1-\varphi(x,y,s))\Pi_\alpha(s) ds f(y)d\mu_\alpha(y)\right|.
    \end{align*}

    By performing the change of variables $1-e^{-t}=u$, $t>0$, and then replacing $u$ by $t$, we can write
    \begin{align*}
        &W_{*,\glob}^\alpha(f)(x)\\
        &=\sup_{0<t<1}\left| \int_{\zinfn}\int_{(-1,1)^n} \frac{e^{-\frac{q_-\left(\sqrt{1-t}x,y,s\right)}{t}+|y|^2}}{t^{n+\widehat{\alpha}}} (1-\varphi(x,y,s))\Pi_\alpha(s) ds f(y)d\mu_\alpha(y)\right|.
    \end{align*}

    Let $(x,y,s)\in G_1$ (recall the definition on page~\pageref{Gtau}). We consider
    \[u(t)=\frac{(1-t)|x|^2+|y|^2-2\sum_{i=1}^n x_i y_i s_i \sqrt{1-t}}{t}, \quad t\in (0,1).\]
    Setting $a=|x|^2+|y|^2$ and $b=2\sum_{i=1}^n x_i y_i s_i$, we have
     \begin{equation}\label{ut}
      u(t)=\frac at-\frac{\sqrt{1-t}}{t} b-|x|^2,\quad t\in (0,1).
    \end{equation}
    We also define
    \begin{equation*}
      v(t)=\frac{e^{-u(t)}}{t^{n+\widehat{\alpha}}}, \quad t\in (0,1).
    \end{equation*}
    We are going to study the supremum of $v(t)$, for $t\in (0,1)$, by proceeding as in the proof of \cite[Proposition~2.1]{MPS}. The derivative of $v$ is
    \[v'(t)=-\frac{e^{-u(t)}}{t^{n+\widehat{\alpha}}}\left(u'(t)+\frac{n+\widehat{\alpha}}{t}\right), \quad t\in(0,1),\]
    where
    \[u'(t)=-\frac{a}{t^2}+b\left(\frac{1}{2t\sqrt{1-t}}+\frac{\sqrt{1-t}}{t^2}\right)=\frac{-2a\sqrt{1-t}+bt+2b(1-t)}{2t^2\sqrt{1-t}}, \quad t\in (0,1).\]
    Thus,
    \begin{align*}
        v'(t)&=-\frac{e^{-u(t)}}{t^{n+\widehat{\alpha}}}\left(\frac{-2a\sqrt{1-t}-bt+2b}{2t^2\sqrt{1-t}}+\frac{n+\widehat{\alpha}}{t}\right)\\
        &=-\frac{e^{-u(t)}}{t^{n+\widehat{\alpha}}}\cdot \frac{2\sqrt{1-t}(t(n+\widehat{\alpha})-a)+b(2-t)}{2t^2\sqrt{1-t}}, \quad t\in (0,1).
    \end{align*}
    By choosing $C_0>1$ large enough, we can prove $a>n+\widehat{\alpha}$ for any $(x,y,s)\in G_1$. Indeed, let us remark that \[|b|\le 2 \sum_{i=1}^n |x_i||y_i|\le |x|^2+|y|^2=a.\]
Besides, \[a=\frac{a-b+a+b}{2}\ge \frac{q_- (x,y,s)+a-|b|}{2}\ge \frac12 q_-(x,y,s).\] 
Also, \[\sqrt{a}\ge \frac{1}{\sqrt{2}}(|x|+|y|).\] 

Fix $(x,y,s)\in G_1$. If $|x|+|y|<1$ then \[a\ge \frac{1}{2}q_-(x,y,s)>\frac{1}{2}\frac{C_0^2}{(1+|x|+|y|)^2}\ge \frac{C_0^2}{8}>\frac{C_0}{8}\] since we shall take $C_0>1.$ And, if $|x|+|y|\ge 1,$ then \[a=\sqrt{a}\sqrt{a}\ge \frac{1}{\sqrt{2}}(|x|+|y|)\frac{1}{\sqrt{2}}\sqrt{q_-(x,y,s)}>\frac{C_0}{2}\frac{|x|+|y|}{1+|x|+|y|}\ge \frac{C_0}{2}\frac{1}{2}=\frac{C_0}{4}>\frac{C_0}{8}.\]
Therefore, taking $C_0>8 (n+\widehat{\alpha})$ we get that $a>n+\widehat{\alpha}$ on $G_1$ as claimed.

    Then, if $b\leq 0$, $v'(t)>0$ for each $t\in (0,1)$, so
    \[\sup_{0<t<1}v(t)\leq v(1)=e^{-|y|^2}.\]

    On the other hand, if $b>0$, from the property $a>n+\widehat{\alpha}$, the equation
    \[2\sqrt{1-t}(a-t(n+\widehat{\alpha}))=b(2-t)\]
    has a unique solution $t_n$. The arguments developed in \cite[p.~850]{MPS} allow us to conclude that
    \[\sup_{0<t<1} v(t)\sim v(t_0),\]
    where $t_0=2\frac{\sqrt{a^2-b^2}}{a+\sqrt{a^2-b^2}}\sim \sqrt{\frac{q_-(x,y,s)}{q_+(x,y,s)}}$. 
    
    Then,
    \[\sup_{0<t<1} v(t)\leq C\left(\frac{q_+(x,y,s)}{q_-(x,y,s)}\right)^{\frac{n+\widehat{\alpha}}{2}}\exp\left(-\frac{|y|^2-|x|^2}{2}-\frac{{\sqrt{q_+(x,y,s)q_-(x,y,s)}}}{2}\right),\]
    provided that $C_0$ satisfies the above condition. From now on, $C_0$ will be fixed such that the stated condition holds.

    Since $q_+(x,y,s)q_-(x,y,s)\geq c$ for every $(x,y,s)\in G_1$ (see \cite[p.~264]{FSS1})
    , we have that \[\sqrt{\frac{q_+(x,y,s)}{q_-(x,y,s)}}\leq C q_+(x,y,s).\] Therefore, for every $(x,y,s)\in G_1$
    \[\sup_{0<t<1}v(t)\leq C q_+(x,y,s)^{n+\widehat{\alpha}}e^{-\frac{|y|^2-|x|^2}{2}-\frac{{\sqrt{q_+(x,y,s)q_-(x,y,s)}}}{2}}= CH_{\alpha,0}(x,y,s)\]
    where $H_{\alpha,0}$ is the function given in \eqref{H}. Hence, $W^\alpha_{*,\glob}$ is pointwise smaller than a multiple of $\mathcal{H}_{\alpha,0}$, which is a bounded operator on $L^{p(\cdot)}(\zinfn, \mu_\alpha)$ by Proposition~\ref{propoaux}, so $W^\alpha_{*,\glob}$ is also bounded on $L^{p(\cdot)}(\zinfn, \mu_\alpha)$.

    We now study $W^\alpha_{*,\loc}$ defined by 
    \begin{equation*}
        \begin{split}
            &W^\alpha_{*,\loc} (f)(x)\\
            &\quad =\sup_{t>0} \left| \int_{\zinfn} \int_{(-1,1)^n}
            \frac{\exp\left(\frac{-q_-\left(e^{-t/2}x,y,s\right)}{1-e^{-t}}+|y|^2\right)}{(1-e^{-t})^{n+\widehat{\alpha}}}
            \varphi(x,y,s) \Pi_\alpha(s)ds f(y)d\mu_\alpha(y)
            \right|,
        \end{split}
    \end{equation*}
    for $x\in\zinfn$. Setting $u= 1 - e^{-t}$ and then replacing $u$ by $t$, we can write
    \[W^\alpha_{*,\loc} (f)(x)=\sup_{0<t<1} \left| \int_{\zinfn} K_t^\alpha (x,y) f(y)d\mathfrak{m}_\alpha(y)\right|\]
    for $x=(x_1,\dots,x_n)\in\zinfn$, where
    \begin{equation*}
            K^\alpha_t(x,y)
            =
             \int_{(-1,1)^n}
          \frac{\exp\left(-\frac{(1-t)|x|^2+|y|^2 - 2\sqrt{1-t}\sum_{i=1}^n x_iy_is_i}{t}\right)}{t^{n+\widehat{\alpha}}}
            \varphi(x,y,s) \Pi_\alpha(s)ds,
    \end{equation*}
    for $x=(x_1,\dots,x_n)$, $y=(y_1,\dots,y_n)\in\zinfn$ and $t\in(0,1)$. 

    As it was explained in Section~\ref{sec2: method}, we shall see that $W^\alpha_{*,\loc}$ is a bounded operator on $L^{p(\cdot)}(\zinfn,\mu_\alpha)$ as a consequence of vector valued Calder\'on--Zygmund theory.

    According to \cite[(2.6)]{Sa4} we have that 
    \[q_-\left(\sqrt{1-t}x,y,s\right) \geq q_-(x,y,s) - C\left(1-\sqrt{1-t}\right) = q_-(x,y,s) - C \frac{t}{1+\sqrt{1-t}},\]
    for $x, y\in\zinfn$, $t\in(0,1)$, $s\in(-1,1)^n$ and $(x,y,s)\in L_2$.
    
    Then,
    \[|K^\alpha_t(x,y)|
        \leq C
        \int_{(-1,1)^n} \frac{e^{-q_-(x,y,s)/t}}{t^{n+\widehat{\alpha}}}
        \Pi_\alpha(s)ds\leq C
        \int_{(-1,1)^n} \frac{\Pi_\alpha(s)}{q_-(x,y,s)^{n+\widehat{\alpha}}}ds\]
    for $x, y\in\zinfn$ and $t\in(0,1)$.

    According to \cite[Lemma~3.1]{BCN} (see also \cite[Lemma~2.1]{NS}), we get
    \begin{equation}\label{C1}
    \sup_{t>0} |K^\alpha_t(x,y)| \leq
    \frac{C}{\mathfrak{m}_\alpha(B(x,|y-x|))},
    \quad x,y\in\zinfn,
    \;x\neq y.
    \end{equation}

    Let $j=1,\dots,n$. We have that, 
    \begin{equation*}
        \begin{split}
            \partial_{x_j} K^\alpha_t(x,y)
            & =
            \int_{(-1,1)^n}\left( \frac{-2x_j(1-t)+2y_js_j\sqrt{1-t}}{t^{n+1+\widehat{\alpha}}} \varphi(x,y,s) +
            \frac{\frac{\partial \varphi}{\partial x_j}(x,y,s)}{t^{n+\widehat{\alpha}}}
            \right)
            \\ & \qquad
            \times
            \exp\left(-\frac{(1-t)|x|^2+|y|^2-2\sum_{i=1}^n x_i y_i s_i \sqrt{1-t}}{t}\right)
            \Pi_\alpha(s)ds,
        \end{split}
    \end{equation*}
    for $x=(x_1,\dots,x_n)$, $y = (y_1,\dots,y_n)\in\zinfn$, and $t>0$.

    According to the properties of $\varphi$ and using again \cite[Lemma~3.1]{BCN}, since
    \begin{equation}\label{ineq_modq-}
        \begin{split}
            \left|x_j\sqrt{1-t} - y_js_j\right|^2
            & =
            x_j^2(1-t)+ y_j^2 s_j^2 - 2x_j y_j s_j \sqrt{1-t}
            \\ & \leq
            (1-t)|x|^2+ |y|^2 - 2\sum_{i=1}^n x_i y_i s_i \sqrt{1-t},
        \end{split}
    \end{equation}
    for $x=(x_1,\dots,x_n)$, $y = (y_1,\dots,y_n)\in\zinfn$, $t\in (0,1)$ and $s\in (-1,1)^n$, we get
    \begin{equation*}
        \begin{split}
            |\partial_{x_j} K^\alpha_t(x,y)|
            & \leq C
            \int_{(-1,1)^n} \frac{e^{-c\frac{q_-(x,y,s)}{t}}}{t^{n+\frac{1}{2}+\widehat{\alpha}}}
            \Pi_\alpha(s)ds
            \\ & \leq C
            \int_{(-1,1)^n} \frac{\Pi_\alpha(s)}{q_-(x,y,s)^{n+\frac{1}{2}+\widehat{\alpha}}}ds
            \\ & \leq C
            \frac{1}{|x-y|\mathfrak{m}_\alpha(B(x,|x-y|))},
        \end{split}
    \end{equation*}
        for $x, y\in\zinfn$, $x\neq y$, and $t>0$. Hence,
        \begin{equation}\label{C2}
            \sup_{t>0}|\partial_{x_j} K^\alpha_t(x,y)| +
            \sup_{t>0}|\partial_{y_j} K^\alpha_t(x,y)|
            \leq C
            \frac{1}{|x-y|\mathfrak{m}_\alpha(B(x,|x-y|))},
        \end{equation}
        for $x, y\in\zinfn$, $x\neq y$.

        Let $N\in\mathbb{N}$. We consider the space $C\left(\left[1/N,N\right]\right)$ of continuous functions in $\left[1/N,N\right]$ with the usual maximum norm. We define, for every $x, y\in \zinfn$, $x\neq y$,
        \begin{equation*}
            \left[K^\alpha(x,y)\right](t) = K^\alpha_t(x,y), \quad t>0.
        \end{equation*}
        By proceeding as above we can see that, for every $x\in \zinfn$, the mapping $\Phi_x(y)=K^\alpha(x,y)$, $y\in \zinfn$, is continuous from $\zinfn$ into $C\left(\left[1/N,N\right]\right)$, and then, $\Phi_x$ is weakly measurable.
        Since $C\left(\left[1/N,N\right]\right)$ is separable, we conclude that, for every $x\in \zinfn$, $\Phi_x$ is strongly measurable (see \cite[p.~131]{Yos}). According to~\eqref{C1} and~\eqref{C2} we deduce that $K^\alpha$ is a $C\left(\left[1/N,N\right]\right)$-valued Calder\'on--Zygmund kernel with respect to $(\zinfn,|\cdot|,\mathfrak{m}_\alpha)$.

        Suppose $\lambda$ is a complex measure supported in $\left[1/N,N\right]$ and $f\in C^\infty_c(\zinfn)$.
        By using~\eqref{C1} we obtain
        \begin{equation*}
            \int_{[1/N,N]} \int_{\zinfn}
            |K^\alpha_t(x,y)| |f(y)| d\mathfrak{m}_\alpha(y) d|\lambda|(t)<\infty,
            \quad x\notin \supp(f),
        \end{equation*}
        because $|\lambda|([1/N,N])<\infty$. Here $|\lambda|$ denotes the total variation of $\lambda$. It follows that
        \begin{equation}\label{C3}
        \begin{split}
            \int_{[1/N,N]} &\int_{\zinfn}
            K^\alpha_t(x,y) f(y) d\mathfrak{m}_\alpha(y) d\lambda(t)
            \\ & =
            \int_{\zinfn}  \int_{[1/N,N]}
            K^\alpha_t(x,y) f(y)
            d\lambda(t)
            d\mathfrak{m}_\alpha(y) ,
            \quad x\notin \supp(f).
        \end{split}
        \end{equation}

        We define the functional $S_\lambda$ on $C([1/N,N])$ by
        \begin{equation*}
            S_\lambda(g) = \int_{[1/N,N]} g(t)
            d\lambda(t), \quad g\in C([1/N,N]).
        \end{equation*}
        Equality~\eqref{C3} says that, by understanding the integral under $S_\lambda$ in the $C([1/N,N])$-Bochner sense,
        \begin{equation*}
            S_\lambda \left[
            \int_{\zinfn}
            [K^\alpha(x,y)](\cdot) f(y) d\mathfrak{m}_\alpha(y)
            \right]
            =
            \int_{[1/N,N]}  W^\alpha_{t,\loc} (f)(x)
            d\lambda(t),\quad x\notin\supp(f).
        \end{equation*}
        Since the dual of $C([1/N,N])$ is the space $\mathcal{M}([1/N,N])$ of complex measures supported on $[1/N,N]$ we conclude that, for every $x\notin \supp(f)$
        \begin{equation*}
             W^\alpha_{t,\loc} (f)(x) = \left[
             \int_{\zinfn}
            [K^\alpha(x,y)](\cdot) f(y) d\mathfrak{m}_\alpha(y)
             \right](t), \quad t\in [1/N,N].
        \end{equation*}

        According to \cite[p.~73]{StLP}, the maximal operator $W^\alpha_{*}$ is bounded on $L^2(\zinfn,\mu_\alpha)$.
        Also, $W^\alpha_{*,\glob}$ is bounded on $L^2(\zinfn,\mu_\alpha)$ (see the first part of this proof). Then, $W^\alpha_{*,\loc}$ is bounded on $L^2(\zinfn,\mu_\alpha)$.
        Hence, there exists $C>0$ such that, for every $N\in\mathbb{N}$,
        \begin{equation}\label{C4}
            \left\| \left\|
            W^\alpha_{t,\loc}(f)
            \right\|_{C([1/N,N])}
            \right\|_{L^2(\zinfn,\mu_\alpha)}
            \leq C \|f\|_{L^2(\zinfn,\mu_\alpha)},
        \end{equation}
        for $f\in L^2(\zinfn,\mu_\alpha)$. By using~\eqref{C1},~\eqref{C2} and~\eqref{C4} as it was explained in Section~\ref{sec2: method} we get
        \begin{equation*}
            \left\|
            \sup_{t\in[1/N,N]} |W^\alpha_{t,\loc}(f)(x)|
            \right\|_{L^{p(\cdot)}(\zinfn,\mu_\alpha)}
            \leq C
            \|f\|_{L^{p(\cdot)}(\zinfn,\mu_\alpha)}
        \end{equation*}
        for $f\in L^{p(\cdot)}(\zinfn,\mu_\alpha)$ and $C>0$ independent of $N\in\mathbb{N}$.

        By using now the monotone convergence theorem (see \cite[p.~75]{DHHR}), we conclude that $W^\alpha_{*,\loc}$ is bounded on $L^{p(\cdot)}(\zinfn,\mu_\alpha)$. Thus, the proof of Theorem~\ref{mainthm} for
        $W^ \alpha_{*}$ is finished.

      \section{Proof of Theorem~\ref{mainthm} for Riesz transforms}\label{sec5: riesz}

    The proof of Theorem~\ref{mainthm} for Riesz transforms $R^{\beta}_\alpha$ of order $\beta\in \mathbb{N}^n\setminus\{(0,\dots,0)\}$, follows the same steps done in the proof of the results in Section~\ref{sec4: max op} by using some results developed in \cite{FSS1} and \cite{Sa1}. We now sketch the proof.

    Let $\beta\in \mathbb{N}^n\setminus\{(0,\dots,0)\}$ be given. For every $f\in L^2(\zinfn,\mu_\alpha)$, we have that
    \begin{equation*}
        R^{\beta}_\alpha(f)(x) = c_\beta f(x)+\text{p.v.} \int_{\zinfn}
        R^{\beta}_\alpha (x,y) f(y) d\mathfrak{m}_\alpha(y), \quad
        \text{a.e. }x\in\zinfn
    \end{equation*}
    where $c_\beta\in \mathbb{R}$ and
    \begin{equation*}
        R^{\beta}_\alpha (x,y) =\frac{1}{\Gamma\left(\frac{\widehat{\beta}}{2}\right)}
        \int_{(-1,1)^n} K^{\beta}_\alpha (x,y,s) \Pi_\alpha (s)\, d s , \quad
        x, y \in \zinfn, \; x\neq y,
    \end{equation*}
    with \begin{align*}
        K^{\beta}_\alpha(x,y,s)&=\int_0^1 r^{\frac{\widehat{{\beta}}-2}{2}}\left(\frac{-\log r}{1-r}\right)^{\frac{\widehat{{\beta}}-2}{2}} \prod_{i=1}^n H_{{\beta}_i}\left(\frac{\sqrt{r}x_i-y_i s_i}{\sqrt{1-r}}\right)\frac{e^{-\frac{q_-(\sqrt{r} x,y,s)}{1-r}}}{(1-r)^{n+\widehat{\alpha}+1}}\, d r\\ &= \int_0^1  (1-t)^{\frac{\widehat{\beta}-1}{2}}\left(\frac{-\log (1-t)}{t}\right)^{\frac{\widehat{\beta}-2}{2}} \prod_{i=1}^n H_{{\beta}_i}\left(\frac{\sqrt{1-t}x_i-y_i s_i}{\sqrt{t}}\right)\\
        & \ \ \quad \quad \times \frac{e^{-\frac{q_-(\sqrt{1-t}x,y,s)}{t}}}
        {t^{n+\widehat{\alpha}+1}}\, \frac{d t}{\sqrt{1-t}},
    \end{align*}
    being $H_{{\beta}_i}$ the one-dimensional Hermite polynomial of degree ${\beta}_i,$ $i=1, \dots, n$, and for the second equality we have made the change of variables $t=1-r.$ In order to establish that $R_\alpha^\beta$ is bounded on  $L^{p(\cdot)}(\zinfn,\mu_\alpha)$ we can assume that $c_\beta=0$.

    We define $R^{\beta}_{\alpha,\loc}$ and $R^{\beta}_{\alpha,\glob}$ in the usual way. Firstly, we shall prove the $L^{p(\cdot)}$-boundedness of the global part.

    Taking into account that $|\sqrt{1-t}x_i-y_is_i|\le q_-^{\frac{1}{2}}(\sqrt{1-t}x, y, s)$ from \eqref{ineq_modq-}, we get, for every $\varepsilon>0$,
\[\left|\prod_{i=1}^n H_{{\beta}_i}\left(\frac{\sqrt{1-t}x_i-y_is_i}{\sqrt{t}}\right)\right|\leq C \sum_{k=0}^{\widehat{{\beta}}}\left(\frac{q_-^{\frac{1}{2}}(\sqrt{1-t}x,y,s)}{\sqrt{t}}\right)^k\leq C e^{\varepsilon \frac{q_-(\sqrt{1-t}x,y,x)}{t}}. \]

    Also, since the function $t\mapsto (1-t)^{\frac{\widehat{\beta}-1}{2}}\left(-\frac{\log (1-t)}{t}\right)^{\frac{\widehat{\beta}-2}{2}}$ is bounded on $[0,1]$, we have
    \begin{align*}
    \left|R^{\beta}_{\alpha, \glob} f(x)\right|&\le C|f(x) + C \int_{\zinfn} |f(y)| \int_{(-1,1)^n} K_\alpha(x,y,s)\, \Pi_\alpha (s)\, ds\, d\mathfrak{m}_\alpha(y),
    \end{align*}
    for $x\in \zinfn$, being
    \begin{equation*}
        K_\alpha(x,y,s)=\int_0^1 \frac{e^{-(1-\varepsilon)\frac{q_-(\sqrt{1-t}x,y,s)}{t}}}
        {t^{n+\widehat{\alpha}+1}}\, \frac{d t}{\sqrt{1-t}} (1-\varphi(x,y,s))
    \end{equation*}
for $y\in \zinfn$ and $s\in (-1,1)^n.$

We can see that the above kernel is, in turn, bounded by the kernel $H_{\alpha,\varepsilon} (x,y,s)$ given in \eqref{H} provided that $\varepsilon< \frac{1}{n+\widehat{\alpha}}$. When $\displaystyle \sum_{i=1}^n x_i y_i s_i>0 $ we follow closely the estimates obtained by S. P\'erez in \cite{P}, taking into account that in this case, for $0<\varepsilon <\frac{1}{n+\widehat{\alpha}},$
\[K_\alpha (x,y,s)\leq C_\varepsilon \frac{e^{-(1-\varepsilon)u_0}}{t_0^{n+\widehat{\alpha}}}\] with $u_0=\frac{|y|^2-|x|^2+\sqrt{q_+(x,y,s)q_-(x,y,s)}}{2}$ and $t_0=2\frac{\sqrt{a^2-b^2}}{a+\sqrt{a^2-b^2}}$, being $a=|x|^2+|y|^2$ and $b=2\sum_{i=1}^n x_i y_i s_i$.

Indeed, by calling $u(t)=\frac{q_-(\sqrt{1-t}x,y,s)}{t}$, notice that $u$ is the one given in \eqref{ut} at the previous section. We have already proved that, for $b>0$,
\[\sup_{0<t<1}\frac{e^{-u(t)}}{t^{n+\widehat{\alpha}}}\sim
\frac{e^{-u_0}}{t_0^{n+\widehat{\alpha}}}.\] Thus, for $\nu=\frac{1}{n+\widehat{\alpha}}-\varepsilon>0$ we have
\begin{align*}
K_\alpha(x,y,s)&= \int_0^1 e^{\varepsilon u(t)}    \left(\frac{e^{-u(t)}}{t^{n+\widehat{\alpha}}}\right)^{\frac{n+\widehat{\alpha}-1}{n+\widehat{\alpha}}}\left(\frac{e^{-u(t)}}{t^{n+\widehat{\alpha}}}\right)^{\frac{1}{n+\widehat{\alpha}}}\frac{dt}{t\sqrt{1-t}}\\ & \leq C \left(\frac{e^{-u_0}}{t_0^{n+\widehat{\alpha}}}\right)^{1-\frac{1}{n+\widehat{\alpha}}}\int_0^1 e^{-\nu u(t)}\frac{dt}{t^2\sqrt{1-t}}.
\end{align*}
By performing the change of variable $s=u(t)-u_0$ and following the calculations made in \cite[p.~499]{P}, the latter expression is bounded by
\[\frac{e^{-\left(1-\frac{1}{n+\widehat{\alpha}}\right)u_0} e^{-\nu u_0}}{t_0^{n+\widehat{\alpha}-1}}\frac{1}{t_0 \sqrt[4]{(a-b)(a+b)}}\int_0^\infty e^{-\nu s}\left(1+\frac{1}{\sqrt{s}}\right)\, ds.\] Moreover, recalling that $(a-b)(a+b)=q_-(x,y,s) q_+(x,y,s)\ge c$ when $b>0$ (see \cite[p.~264]{FSS1}) we get the estimate claimed above.

For the case $b\le 0,$ we have that $\frac{a}{t}-|x|^2\le u(t)=\frac{q_-(\sqrt{1-t}x,y,s)}{t}$ like in \cite[p.~500]{P}. After making the change of variables $a\left(\frac{1}{t}-1\right)=s$ and performing the integration taking into account that on the global part $a\ge c,$ we get $K_\alpha(x,y,s)\leq Ce^{-(1-\varepsilon)|y|^2}$.

Therefore, $K_\alpha(x,y,s)\leq C H_{\alpha,\varepsilon}(x,y,s)$ for $0<\varepsilon<\frac{1}{n+\widehat{\alpha}}$.
From Proposition~\ref{propoaux} we deduce that the operator $R^{\beta}_{\alpha,\glob}$ is bounded on $L^{p(\cdot)}(\zinfn,\mu_\alpha)$ by choosing ${0<\varepsilon <\frac{1}{n+\widehat{\alpha}}\wedge\frac{1}{(p^-)'}}$.

    According to \cite[p.~699]{NS} $R_\alpha^\beta$ is bounded on $L^2(\zinfn,\mu_\alpha)$. Since, as we have just proved $R_{\alpha,\glob}^\beta$ is bounded on $L^2(\zinfn,\mu_\alpha)$, $R_{\alpha,\loc}^\beta$ is also bounded on $L^2(\zinfn,\mu_\alpha)$. By proceeding as in  \cite[Lemma~3.3]{Sa1} and \cite[Lemma~3.1]{BCN} (see also \cite[Proposition~6~and~Lemma~7]{FSS1})  we can see that the integral kernel of $R^{\beta}_{\alpha,\loc}$ is a Calder\'on--Zygmund kernel with respect to $\mathfrak{m}_\alpha$. The procedure developed in Section~\ref{sec2: method} leads to see that $R^{\beta}_{\alpha,\loc}$ is bounded on $L^{p(\cdot)}(\zinfn,\mu_\alpha)$ and with this we finish the proof of this result.

    \section{Proof of Theorem~\ref{mainthm} for Littlewood--Paley functions} 

    In this section we prove Theorem~\ref{mainthm} for Littlewood--Paley functions $g^{\beta,k}_\alpha$, with $k\in \mathbb{N}$ and $\beta\in \mathbb{N}^n$ such that $k+\widehat{\beta}>0$.

    Let $k\in \mathbb{N}$, $k\geq 1$. We recall that $g_\alpha^k=g_\alpha^{\boldsymbol{0},k},$ i.e.
    \begin{equation*}
        g^k_\alpha (f)(x) =
        \left(
        \int_0^\infty \left|t^k \partial^k_t P^\alpha_t (f)(x)\right|^2 \frac{dt}{t}
        \right)^{1/2},
        \quad x\in \zinf^ n,
    \end{equation*}
    where
    \begin{equation*}
         P^\alpha_t (f)(x) =
         \frac{t}{2\sqrt{\pi}}
        \int_0^\infty \frac{e^{-\frac{t^2}{4u}}}{u^{\frac{3}{2}}}
        W^\alpha_{u}(f)(x) du,
        \quad x\in \zinf^ n
        ,  t>0.
    \end{equation*}
    We define
    \begin{equation*}
         P^\alpha_{t,\loc} (f)(x) =
         \frac{t}{2\sqrt{\pi}}
        \int_0^\infty \frac{e^{-\frac{t^2}{4u}}}{u^{\frac{3}{2}}}
        W^\alpha_{u,\loc}(f)(x) du,
        \quad x\in \zinf^ n
        ,  t>0.
    \end{equation*}
    and
    \begin{equation*}
         P^\alpha_{t,\glob} (f)(x) =
         \frac{t}{2\sqrt{\pi}}
        \int_0^\infty \frac{e^{-\frac{t^2}{4u}}}{u^{\frac{3}{2}}}
        W^\alpha_{u,\glob}(f)(x) du,
        \quad x\in \zinf^ n
        ,  t>0.
    \end{equation*}
     and consider
    \begin{equation*}
        g^k_{\alpha,\loc} (f)(x) =
        \left(
        \int_0^\infty\left|t^k \partial^k_t P^\alpha_{t,\loc} (f)(x)\right|^2 \frac{dt}{t}
        \right)^{1/2},
        \quad x\in \zinf^ n,
    \end{equation*}
    and
    \begin{equation*}
        g^k_{\alpha,\glob} (f)(x) =
        \left(
        \int_0^\infty\left|t^k \partial^k_t P^\alpha_{t,\glob} (f)(x)\right|^2 \frac{dt}{t}
        \right)^{1/2},
        \quad x\in \zinf^ n.
    \end{equation*}

    We firstly prove that $g^k_{\alpha,\glob}$ defines a bounded operator on $L^{p(\cdot)}(\zinfn,\mu_\alpha)$.

    By using Minkowski inequality we get
    \begin{equation*}
        g^k_{\alpha,\glob} (f)(x)
        \leq
        \int_{\zinfn} |f(y)|
        \left(
        \int_0^\infty\left|t^k \partial^k_t P^\alpha_{t,\glob}(x,y)\right|^2 \frac{dt}{t}
        \right)^{1/2}
        d\mu_\alpha(y),
    \end{equation*}
    for $x\in\zinfn$, where
    \begin{equation*}
         P^\alpha_{t,\glob} (x,y) =
         \frac{t}{2\sqrt{\pi}}
        \int_0^\infty \frac{e^{-\frac{t^2}{4u}}}{u^{\frac{3}{2}}}
        W^\alpha_{u,\glob}(x,y) du
        \quad x,y\in \zinf^ n
        ,  t>0.
    \end{equation*}
    We have that
    \begin{equation*}
        \begin{split}
            t^k \partial^k_t P^\alpha_{t,\glob} (x,y)
            & =
            t^k \partial^k_t
            \left[ \frac{1}{\sqrt{\pi}}
            \int_0^\infty
            \frac{e^{-v}}{\sqrt{v}}
            W^\alpha_{\frac{t^2}{4v},\glob}(x,y)dv
            \right]
            \\ & =
            t^k \partial^{k-1}_t
            \left[ \frac{1}{\sqrt{\pi}}
            \int_0^\infty
            \frac{e^{-v}}{\sqrt{v}}
           \partial_t W^\alpha_{\frac{t^2}{4v},\glob}(x,y)dv
            \right]
            \\ & = t^k \partial^{k-1}_t
            \left[ \frac{t}{2\sqrt{\pi}}
            \int_0^\infty
            \frac{e^{-v}}{v^{3/2}}
            \left[\partial_z W^\alpha_{z,\glob}(x,y)
            \right]_{z=\frac{t^2}{4v}}
            dv \right]
            \\ & =
            t^k \partial^{k-1}_t
            \left[ \frac{1}{\sqrt{\pi}}
            \int_0^\infty
            \frac{e^{-\frac{t^2}{4z}}}{\sqrt{z}}
            \partial_z W^\alpha_{z,\glob}(x,y)dz
            \right]
            \\ & =
            \frac{1}{\sqrt{\pi}}
            \int_0^\infty
             t^k \partial^{k-1}_t
             \left[ e^{-\frac{t^2}{4z}} \right]
             \partial_z W^\alpha_{z,\glob}(x,y)
             \frac{dz}{\sqrt{z}},
        \end{split}
    \end{equation*}
    for $x, y\in\zinfn$ and $t>0$.

    By using Minkowski inequality and \cite[Lemma~3]{BCCFR}
    we get
    \begin{equation*}
        \begin{split}
            \|t^k \partial^k_t P^\alpha_{t,\glob} (x,y) \|&_{L^2\left(\zinf,\frac{dt}{t}\right)}
            \\ & \leq
            \frac{1}{\sqrt{\pi}}
            \int_0^\infty
            | \partial_z W^\alpha_{z,\glob}(x,y)|
            \left( \int_0^\infty \left|
             t^k \partial^{k-1}_t
             \left[ e^{-\frac{t^2}{4z}} \right]
            \right|^2 \frac{dt}{t}
            \right)^{\frac{1}{2}}
             \frac{dz}{\sqrt{z}}
             \\ & \leq  C  \int_0^\infty
            | \partial_z W^\alpha_{z,\glob}(x,y)|
            \left( \int_0^\infty
             \frac{e^{-c\frac{t^2}{z}}}{z^{k-1}}
              t^{2k-1} dt
            \right)^{\frac{1}{2}}
            \frac{dz}{\sqrt{z}}
            \\ & \leq  C  \int_0^\infty
            | \partial_z W^\alpha_{z,\glob}(x,y)|  dz
            \quad x,\;y\in\zinfn.
        \end{split}
    \end{equation*}
    We recall that
    \begin{equation*}
        W^\alpha_{z,\glob}(x,y) =
        \frac{1}{(1-e^{-z})^{\widehat{\alpha} + n}}
        \int_{(-1,1)^n} e^{-\frac{q_-(e^{-z/2}x,y,s)}{1-e^{-z}}+|y|^2}
        (1-\varphi(x,y,s)) \Pi_\alpha(s) ds,
    \end{equation*}
    for $x, y\in\zinfn$ and $z>0$. Then,
    \begin{equation*}
        \partial_z W^\alpha_{z,\glob}(x,y) = e^{|y|^2}
        \int_{(-1,1)^n}
        \partial_z \left[
        \frac{e^{-\frac{q_-(e^{-z/2}x,y,s)}{1-e^{-z}}}}{(1-e^{-z})^{\widehat{\alpha} + n}}\right]
        (1-\varphi(x,y,s)) \Pi_\alpha(s) ds,
    \end{equation*}
    for $x, y\in\zinfn$ and $z>0$.

    We obtain
    \begin{equation*}
        \begin{split}
            &\left\| t^k \partial^k_t P^\alpha_{t,\glob} (x,y) \right\|_{L^2\left(\zinf,\frac{dt}{t}\right)}
            \\ & \quad\leq
            C e^{|y|^2}
        \int_{(-1,1)^n} \int_0^\infty \left|
        \partial_z \left[
        \frac{e^{-\frac{q_-(e^{-z/2}x,y,s)}{1-e^{-z}}}}{(1-e^{-z})^{\widehat{\alpha} + n}}\right]
        \right| dz
        (1-\varphi(x,y,s)) \Pi_\alpha(s) ds,
        \end{split}
    \end{equation*}
    We have that
    \begin{equation*}
    \partial_z \left[
        \frac{e^{-\frac{q_-(e^{-z/2}x,y,s)}{1-e^{-z}}}}{(1-e^{-z})^{\widehat{\alpha} + n}}\right]
         =
         \frac{e^{-\frac{q_-(e^{-z/2}x,y,s)}{1-e^{-z}}}}{(1-e^{-z})^{\widehat{\alpha} + n}}
         P_{x,y,s}\left(e^{-z/2}\right),
    \end{equation*}
    for $x, y\in\zinfn$ and $s\in(-1,1)^n$, where, for every $x, y\in\zinfn$ and $s\in(-1,1)^n$, $P_{x,y,s}$ is a polynomial whose degree is at most $4$. Then, for every $x, y\in\zinfn$ and $s\in(-1,1)$, the sign of $P_{x,y,s}$ changes at most four times. We obtain
    \begin{equation*}
        \int_0^\infty \left|
        \partial_z \left[
        \frac{e^{-\frac{q_-(e^{-z/2}x,y,s)}{1-e^{-z}}}}{(1-e^{-z})^{\widehat{\alpha} + n}}\right]
        \right| dz
        \leq C \sup_{z\in\zinf}
        \frac{e^{-\frac{q_-(e^{-z/2}x,y,s)}{1-e^{-z}}}}{(1-e^{-z})^{\widehat{\alpha} + n}}=\sup_{0<t<1} \frac{e^{-\frac{q_-(\sqrt{1-t}x,y,s)}{t}}}{t^{n+\widehat{\alpha}}},
    \end{equation*}
    for $x, y\in\zinfn$ and $s\in(-1,1)^n$.

    This estimate allows us to reduce the analysis of the global operator $g^k_{\alpha,\glob}$ to the operator considered when we studied the operator $W^\alpha_{*,\glob}$  in Section~\ref{sec4: max op}. Thus, we conclude that the operator $g^k_{\alpha,\glob}$ is bounded on $L^{p(\cdot)}(\zinfn,\mu_\alpha)$.

     We now study the operator  $g^k_{\alpha,\loc}$. We will use vector valued Calder\'on--Zygmund theory. In order to have the measurability of the Banach valued functions that appear we are going to consider, for every $N\in\mathbb{N}$, $N\geq 1$, the Banach space $B_N = L^2\left((1/N,N),\frac{dt}{t}\right)$ and in the last step we pass to the limit as $N$ goes to infinity instead of working with the Banach space $L^2(\zinf,\frac{dt}{t})$.
     Let $N\in\mathbb{N}$, $N\geq 1$. We define the operator
     \begin{equation*}
         G^k_{\alpha,\loc}(f)(x,t) = t^k \partial^k_t P^\alpha_{t,\loc}(f)(x), \quad
         x\in\zinf ,  t>0.
     \end{equation*}
    The integral kernel of $ G^k_{\alpha,\loc}$ with respect to $d\mathfrak{m}_\alpha$ is the following
    \begin{equation*}
        M^k_{\alpha,\loc}(x,y,t) = t^k \partial^k_t P^{\alpha}_{t,\loc}(x,y) e^{-|y|^2},
        \quad x, y\in \zinfn ,  t>0.
    \end{equation*}
    Since the Poisson semigroup is a Stein symmetric diffusion semigroup, the function $g^k_\alpha$ is bounded on $L^2(\zinfn,\mu_\alpha)$. In the first part of this proof we establish that $g^k_{\alpha,\glob}$ is bounded on $L^2(\zinfn,\mu_\alpha)$. Thus, there exists $C>0$ that does not depend on $N$ such that
    \begin{equation*}
        \|G^k_{\alpha,\loc}(f)\|_{L^2_{B_N}(\zinfn,\mu_\alpha)} \leq C
        \|f\|_{L^2(\zinfn,\mu_\alpha)}.
    \end{equation*}
    By using Minkowski inequality, \cite[Lemma~4]{BCCFR} and \cite[Lemma~3.1]{BCN} (see also \cite[Proposition~6~and~Lemma~7]{FSS1}) as above, we get
    \begin{equation*}
        \begin{split}
            \|M^k_{\alpha,\loc} & (x,y,t)\|_{L^2\left(\zinfn,\frac{dt}{t}\right)}
             \\ & \leq C
            \int_{(-1,1)^n}|\varphi(x,y,s) \Pi_\alpha(s)
            \int_0^\infty \left|
        \partial_z \left[
        \frac{e^{-\frac{q_-(e^{-z/2}x,y,s)}{1-e^{-z}}}}{(1-e^{-z})^{\widehat{\alpha} + n}}\right]
        \right| dz ds
        \\ & \leq C
            \int_{(-1,1)^n}|\varphi(x,y,s) \Pi_\alpha(s)
             \sup_{z\in\zinf}
        \frac{e^{-\frac{q_-(e^{-z/2}x,y,s)}{1-e^{-z}}}}{(1-e^{-z})^{\widehat{\alpha} + n}} dz ds
        \\ & \leq C
            \int_{(-1,1)^n} \frac{\Pi_\alpha(s)}{q_-(x,y,s)^{\widehat{\alpha} + n}}  ds
         \\ & \leq
         \frac{C}{\mathfrak{m}_\alpha(B(x,|y-x|))},
         \quad x,y\in\zinfn,\;x\neq y.
        \end{split}
    \end{equation*}

    Let $j=1,\dots,n$. By proceeding in a similar way we can see that
    \begin{align*}
            \|\partial_{x_j} M^k_{\alpha,\loc} & (x,y,t)\|_{L^2\left(\zinfn,\frac{dt}{t}\right)}
             \\ & \leq C
            \int_{(-1,1)^n}|\varphi(x,y,s) \Pi_\alpha(s)
            \int_0^\infty \left|
        \partial_z \partial_{x_j}\left[
        \frac{e^{-\frac{q_-(e^{-z/2}x,y,s)}{1-e^{-z}}}}{(1-e^{-z})^{\widehat{\alpha} + n}}\right]
        \right| dz ds
        \\ &  \quad + C
            \int_{(-1,1)^n}|\partial_{x_j}\varphi(x,y,s) \Pi_\alpha(s)
            \int_0^\infty \left|
        \partial_z \left[
        \frac{e^{-\frac{q_-(e^{-z/2}x,y,s)}{1-e^{-z}}}}{(1-e^{-z})^{\widehat{\alpha} + n}}\right]
        \right| dz ds
        \\ & \leq C
            \int_{(-1,1)^n}|\varphi(x,y,s) \Pi_\alpha(s)
            \sup_{z\in\zinf} \left|\partial_{x_j} \left[
        \frac{e^{-\frac{q_-(e^{-z/2}x,y,s)}{1-e^{-z}}}}{(1-e^{-z})^{\widehat{\alpha} + n}}\right] \right| dz ds
        \\ & \quad + C
            \int_{(-1,1)^n}|\partial_{x_j}  \varphi(x,y,s) \Pi_\alpha(s)
             \sup_{z\in\zinf} \left|
        \frac{e^{-\frac{q_-(e^{-z/2}x,y,s)}{1-e^{-z}}}}{(1-e^{-z})^{\widehat{\alpha} + n}} \right| dz ds
        \\ & \leq C
            \int_{(-1,1)^n} \frac{\Pi_\alpha(s)}{q_-(x,y,s)^{\widehat{\alpha} + n + 1/2}}  ds
         \\ & \leq
         \frac{C}{|x-y|\mathfrak{m}_\alpha(B(x,|y-x|))},
         \quad x,y\in\zinfn,\;x\neq y.
        \end{align*}

    Hence,
    \begin{equation}\label{Z1}
        \|M^k_{\alpha,\loc}(x,y)\|_{B_N}
        \leq \frac{C}{\mathfrak{m}_\alpha(B(x,|x-y|))}\quad x,y\in\zinfn,\; x\neq y,
    \end{equation}
    and
       \begin{equation*}
       \begin{split}
        \sum_{j=1}^n\left(
        \|\partial_{x_j}M^k_{\alpha,\loc}(x,y)\right.&\|_{B_N} +\left.
        \|\partial_{y_j}M^k_{\alpha,\loc}(x,y)\|_{B_N}
        \right)
        \\ &
        \leq \frac{C}{|x-y|\mathfrak{m}_\alpha(B(x,|x-y|))}\quad x,y\in\zinfn,\; x\neq y,
       \end{split}
    \end{equation*}
    where $C>0$ does not depend on $N$. Suppose that $h\in B_N$ and $g$ is a smooth function with compact support in $\zinfn$. By using~\eqref{Z1} we deduce that
    \begin{equation*}
    \begin{split}
        \int_{1/N}^N h(t) G^k_{\alpha,\loc} (g)(x,t) \frac{dt}{t}
        & =
        \int_{\zinfn} g(y) \int_{1/N}^N t^k\partial^k_t P^\alpha_{t,\loc} (x,y) h(t) \frac{dt}{t} d\mathfrak{m}_\alpha(y)
        \\ & =
        \int_{1/N}^N h(t) \left[
        \int_{\zinfn} K(x,y) g (y)d\mathfrak{m}_\alpha(y)\right](t)\frac{dt}{t},
    \end{split}
    \end{equation*}
    for $x\notin \supp(f)$, where, for every $x, y\in\zinfn$, $x\neq y$,
    \begin{equation*}
        [K(x,y)](t) = t^k\partial^k_t P^\alpha_{t,\loc}(x,y),\quad \text{a.e. } t\in (1/N,N),
    \end{equation*}
    and the integral in the last line is understood in the $B_N$-Bochner sense. Note that, for every $x\in \zinfn$, the function $\Phi_x$ defined by $\Phi_x(y)=K(x,y)g(y)$, $y\in \zinfn$, is strongly measurable from $\zinfn$ into $B_N$. Indeed, let $x\in \zinfn$. Since $\Phi_x$ is continuous, $\Phi_x$ is weakly measurable. By taking into account that $B_N$ is a separable Banach space, Petti's Theorem (\cite[p.~131]{Yos}) allows us to conclude that $\Phi_x$ is strongly measurable.

    Thus, for every $x\notin\supp(f)$,
    \begin{equation*}
        G^k_{\alpha,\loc}(f)(x,t) = \left[
        \int_0^\infty K(x,y)f(y) d\mathfrak{m}_\alpha(y)
        \right](t),
    \end{equation*}
    in $L^2\left((1/N,N),\frac{dt}{t}\right)$.

    The arguments explained in Section~\ref{sec2: method} allow us to conclude that there exists $C>0$ such that, for every $N\in\mathbb{N}$, $N\geq 1$,
    \begin{equation*}
        \left\| \left\|
        G^k_{\alpha,\loc} (f)
        \right\|_{L^2\left((1/N,N),\frac{dt}{t}\right)}
        \right\|_{L^{p(\cdot)}(\zinfn,\mu_\alpha)}
        \leq C
        \left\| f
        \right\|_{L^{p(\cdot)}(\zinfn,\mu_\alpha)},
    \end{equation*}
    for $f\in L^{p(\cdot)}(\zinfn,\mu_\alpha)$,

    By using the monotone convergence theorem (see \cite[p.~75]{DHHR}) we get
    \begin{equation*}
        \left\| g^k_{\alpha,\loc}(f)
        \right\|_{L^{p(\cdot)}(\zinfn,\mu_\alpha)}
        \leq C
        \left\| f
        \right\|_{L^{p(\cdot)}(\zinfn,\mu_\alpha)},
    \end{equation*}
    for $f\in L^{p(\cdot)}(\zinfn,\mu_\alpha)$, and the proof of our result is finished.

    Let us consider now the Littlewood--Paley functions including also spatial derivatives. For $\beta\in \mathbb{N}^n\setminus \{(0,\dots,0)\}$ and $k\in \mathbb{N}$, we consider
    \begin{equation*}
        g^{\beta,k}_\alpha (f)(x) =
        \left( \int_0^\infty\left|t^{k+\widehat{\beta}} \partial^k_t D^\beta_x
        P_t^\alpha (f)(x)\right|^2 \frac{dt}{t}
        \right)^{1/2},\;
        x\in\zinfn.
    \end{equation*}

    We define the local and global part of $g^{\beta,k}_\alpha$ as follows
    \begin{equation*}
        g^{\beta,k}_{\alpha,\loc} (f)(x) =
        \left( \int_0^\infty\left|t^{k+\widehat{\beta}} \partial^k_t
        P^{\alpha,\beta}_{t,\loc} (f)(x)\right|^2 \frac{dt}{t}
        \right)^{1/2},\;
        x\in\zinfn.
    \end{equation*}
    and
    \begin{equation*}
        g^{\beta,k}_{\alpha,\glob} (f)(x) =
        \left( \int_0^\infty\left|t^{k+\widehat{\beta}} \partial^k_t D^\beta_x
        P^{\alpha,\beta}_{t,\glob} (f)(x)\right|^2 \frac{dt}{t}
        \right)^{1/2},\;
        x\in\zinfn.
    \end{equation*}
    where
    \begin{equation*}
        P^{\alpha,\beta}_{t,\loc} (f)(x) =
        \frac{t}{2\sqrt{\pi}}
        \int_0^\infty \frac{e^{-\frac{t^2}{4u}}}{u^{3/2}} W^{\alpha,\beta}_{u,\loc}(f)(x) dx, \; x\in\zinfn  ,  t>0,
    \end{equation*}
    and
    \begin{equation*}
        P^{\alpha,\beta}_{t,\glob} (f)(x) =
        \frac{t}{2\sqrt{\pi}}
        \int_0^\infty \frac{e^{-\frac{t^2}{4u}}}{u^{3/2}} W^{\alpha,\beta}_{u,\glob}(f)(x) dx, \; x\in\zinfn ,\  t>0.
    \end{equation*}
    Here,
    \begin{equation*}
        W^{\alpha,\beta}_u (f)(x) = \int_{\zinfn} D^\beta_x W^{\alpha}_u (x,t) f(y) d\mu_\alpha(y),
        \; x\in\zinfn ,  u>0,
    \end{equation*}
    and $W^{\alpha,\beta}_{u,\loc}$ and $W^{\alpha,\beta}_{u,\glob}$ are defined in the usual way.

    By using Minkowski inequality and~\cite[Lemma~4]{BCCFR} we obtain
    \begin{align*}
       g^{\beta,k}_{\alpha,\glob} (f)(x) &
        \leq C \int_{\zinfn}|f(y)| (1-\varphi(x,y))
        \\
        &\quad \times \left(
        \int_0^\infty \left| t^{k + \widehat{\beta}} \partial^k_t
        \left[\int_0^\infty
        \frac{t e^{-\frac{t^2}{4u}}}{u^{3/2}}
        D^\beta_x W^\alpha_u (x,y) du
        \right]
        \right|^2
        \frac{dt}{t}
        \right)^{1/2} d\mu_\alpha(y)
        \\ &
        \leq C
        \int_{\zinfn}|f(y)| (1-\varphi(x,y))\\
        &\quad \times
        \int_0^\infty
        \left(
        \int_0^\infty |
        t^{k + \widehat{\beta}}
        \partial^k_t
        (t e^{-\frac{t^2}{4u}})| \frac{dt}{t}
        \right)^{1/2}
        |D^\beta_x W^\alpha_u (x,y)|
        \frac{du}{u^{\frac32}}
        d\mu_\alpha(y)
        \\ & \leq C
        \int_{\zinfn}|f(y)| (1-\varphi(x,y))
        \int_0^\infty
        u^{\widehat{\beta}/2 - 1}
        |D^{\beta}_x W_u(x,y)|du
       d\mu_\alpha(y),
    \end{align*}
    for $x\in \zinfn$. From now on we follow the same steps we have done for the higher order Riesz-Laguerre transforms restricted
    to the global part in order to get the $L^{p(\cdot)}$-boundedness of this operator too, taking into account the representation
    given in \eqref{Rieszkernelderiv}.

    In order to study the local operator $g^{\beta,k}_{\alpha,\loc}$ we use the vector valued Calder\'on--Zygmund theory. We consider the operator $G^{\beta,k}_{\alpha,\loc}$ defined by
    \begin{equation*}
        G^{\beta,k}_{\alpha,\loc} (f)(x,t) = t^{k + \widehat{\beta}}
        \partial^k_t P^{\alpha,\beta}_{t,\loc} (f)(x), \;x\in\zinfn ,  t>0.
    \end{equation*}
    The integral kernel $M^{\beta,k}_{\alpha,\loc}$ of the above operator  with respect to $\mathfrak{m}_\alpha$ can be written as follows
    \begin{equation*}
        M^{\beta, k}_{\alpha,\loc} (x,y,t) = \int_{(-1,1)^n} \varphi(x,y) M^{\beta,k}_\alpha (x,y,t,s) \Pi_\alpha(s) ds, \;x,\,y \in\zinfn, ,  t>0,
    \end{equation*}
    where
    \begin{equation*}
        M^{\beta,k}_\alpha(x,y,t,s) = \frac{1}{2\sqrt{\pi}}
        \int_0^\infty \frac{t^{k+\widehat{\beta}} \partial^k_t
        \left[ t e^{-\frac{t^2}{4u}}
        \right]}{u^{3/2}(1-e^{-u})^{n+\widehat{\alpha}}}
        D^\beta_x \left[
        e^{-\frac{q_-(e^{-u/2}x,y,s)}{1-e^{-u + |y|^2}}}
        \right] du,
    \end{equation*}
    for $x, y\in\zinfn$, $t>0$ and $s\in(-1,1)^n$.
    By using Minkowski inequality and~\cite[Lemma~4]{BCCFR}, according to~\cite[(2.3)]{FSS1}, we deduce that, for every $x, y\in\zinfn$ and $s\in(-1,1)^n$,
    \begin{equation*}
        \begin{split}
            & \| M^{\beta,k}_\alpha(x,y,t,s)\|_{L^2\left(\zinf,\frac{dt}{t}\right)}\\
            &\leq C
            \int_0^\infty \frac{\left\|t^{k+\widehat{\beta}} \partial^k_t
        \left[ t e^{-\frac{t^2}{4u}}
        \right]\right\|_{L^2\left(\zinf,\frac{dt}{t}\right)}}{u^{3/2}(1-e^{-u})^{n+\widehat{\alpha}}}
        \left |
        D^\beta_x \left[
        e^{-\frac{q_-(e^{-u/2}x,y,s)}{1-e^{-u + |y|^2}}}
        \right]  \right |   du
        \\ & \leq C
        \int_0^\infty
        \frac{u^{\widehat{\beta}/2-1}}{u^{3/2}(1-e^{-u})^{n+\widehat{\alpha}}}
       \left |
        D^\beta_x \left[
        e^{-\frac{q_-(e^{-u/2}x,y,s)}{1-e^{-u + |y|^2}}}
        \right]  \right |   du
        \\ & \leq C
        \int_0^1 \sqrt{r}^{\widehat{\beta}-2}
        \left(-\frac{\log(r)}{1-r}\right)^{\frac{\widehat{\beta}-2}{2}}
        \prod_{i=1}^n \left | H_{\beta_i}
        \left(\frac{\sqrt{r}x_i-y_is_i}{\sqrt{1-r}}\right)\right |
        \frac{e^{-\frac{q_-(rx,y,s)}{1-r}}}{(1-r)^{n+\widehat{\alpha}+1}} dr,
        \end{split}
    \end{equation*}
    where we recall that, for every $j\in\mathbb{N}$, $H_j$ denotes the one-dimensional Hermite polynomial of degree $j$.

    As in the Riesz transform $R_{\alpha,\loc}^\beta$ case (Section~\ref{sec5: riesz}), we obtain that
    \begin{equation*}
        \left\| M^{\beta,k}_{\alpha,\loc}(x,y,\cdot)\right\|_{L^2\left(\zinf,\frac{dt}{t}\right)} \leq \frac{C}{\mathfrak{m}_\alpha (B(x,|x-y|))},
        \;x,\,y\in \zinfn, \, x\neq y.
    \end{equation*}
    In a similar way we can see that
    \begin{align*}
         \sum_{i=1}^n&\left(\left\| \partial_{x_i}M^{\beta,k}_{\alpha,\loc}(x,y,\cdot)\right\|_{L^2\left(\zinf,\frac{dt}{t}\right)}+
         \left\| \partial_{y_i}M^{\beta,k}_{\alpha,\loc}(x,y,\cdot)\right\|_{L^2\left(\zinf,\frac{dt}{t}\right)}\right)\\
         &  \leq \frac{C}{|x-y|\mathfrak{m}_\alpha(B(x,|x-y|))},
    \end{align*}
    for $x, y\in\zinfn$, $x\neq y$.

    By proceeding as in the first part of the proof when $\beta=0$, we can prove that the local operator $g^{\beta,k}_{\alpha,\loc}$ is bounded on $L^{p(\cdot)}(\zinfn,\mu_\alpha)$ when we show that
    $g^{\beta,k}_{\alpha,\loc}$ is bounded on $L^{2}(\zinfn,\mu_\alpha)$.

    We are going to see that $g^{\beta,k}_{\alpha,\loc}$ is bounded on $L^{2}(\zinfn,\mu_\alpha)$ by proving that  $g^{\beta,k}_{\alpha}$ is bounded on $L^{2}(\zinfn,\mu_\alpha)$. Then, since we have proved that  $g^{\beta,k}_{\alpha,\glob}$ is bounded on $L^{2}(\zinfn,\mu_\alpha)$, we conclude that  $g^{\beta,k}_{\alpha,\loc}$ is bounded on $L^{2}(\zinfn,\mu_\alpha)$.

    According to~\cite[p.~699]{NS}, and by performing a change of variables we obtain that
    \begin{equation*}
        D^{\beta}_x \mathcal{L}^\alpha_r(x)
        =
        \sum_{(m,\ell)\in\mathcal{A}(\beta)} C^{\beta,\alpha}_{m,\ell}(r)
        \left(\prod_{i=1}^n x_i^{\beta_i-m_i}\right)
        \mathcal{L}^{\alpha+\beta-m}_{r-\beta+m+\ell}(x),
    \end{equation*}
    for $x=(x_1,\dots,x_n)\in\zinfn$ and $r\in\mathbb{N}^n$, with
    \begin{equation*}
        \mathcal{A}(\beta) =
        \left\{
        (m,\ell)\in \mathbb{N}\times\mathbb{N}^n: 0\leq m_j \leq \beta_j, 0\leq \ell_j\leq \frac{\beta_j-m_j}{2}, j=1,\dots,n
        \right\}.
    \end{equation*}
    Furthermore, for every $(m,\ell)\in \mathcal{A}(\beta)$ and $k\in \mathbb{N}^n$, $C^{\beta,\alpha}_{(m,\ell)}\in \mathbb{R}$ and
    \begin{equation}\label{aux}
        \left|C^{\beta,\alpha}_{(m,\ell)}\right|
        \left\|\left(\prod_{i=1}^n x_i^{\beta_i-m_i}\right)
        \mathcal{L}^{\alpha+\beta-m}_{k-\beta+m+\ell}\right\|_{L^2(\zinf,\mu_\alpha)} \leq
        C_\beta \lambda^{\widehat{\beta}/2}_k.
    \end{equation}

    Suppose that $f=\sum_{r\in\Lambda}w_r \mathcal{L}^\alpha_r$, where $\Lambda$ 
    is a finite subset of $\mathbb{N}^n$ and $w_r\in\mathbb{C}$ for $r\in \Lambda$. Since for every $(m,\ell)\in \mathcal{A}(\beta)$, the system
    \begin{equation*}
        \left\{\left(\prod_{i=1}^n x_i^{\beta_i-m_i}\right)
        \mathcal{L}^{\alpha+\beta-m}_{r-\beta+m+\ell}
        \right\}_{k\in\Lambda_{m,\ell}}
    \end{equation*}
    is orthogonal with respect to $\mu_\alpha$, where $\Lambda_{m,\ell}=\{k\in\mathbb{N}^n: k_j-\beta_j+m_j+\ell_j\geq 0,\, j=1,\dots,n\}$, Bessel inequality leads, by using~\eqref{aux}, to
    \begin{equation*}
        \begin{split}
            \|g^{\beta,k}_\alpha (f)\|^2_{L^2(\zinfn,\mu_\alpha)}&=
            \int_0^\infty t^{2(k+\widehat{\beta})-1}
            \int_{\zinfn}\left|
            \sum_{r\in\Lambda} \lambda^{k/2}_r e^{-\sqrt{\lambda_r}t}
            c_r D^\beta_x \mathcal{L}^\alpha_r(x)
            \right|^2d\mu_\alpha(x) dt
        \\ & \leq C
        \int_0^\infty t^{2(k+\widehat{\beta})-1}
        \sum_{r\in\Lambda}|c_r|^2 e^{-2\sqrt{\lambda_r}t}
        \lambda_r^{k+\widehat{\beta}}dt
        \\ & \leq C
        \sum_{r\in\Lambda}|c_r|^2
        = C \|f\|^2_{L^2(\zinfn,\mu_\alpha)}.
        \end{split}
    \end{equation*}

    Suppose now that $f\in L^2(\zinfn,\mu_\alpha)$. For every $m\in \mathbb{N}$, we define
    \begin{equation*}
        f_m = \sum_{\gamma\in\mathbb{N}^n, \widehat{\gamma}\leq m} c^\alpha_\gamma(f)\mathcal{L}^\alpha_\gamma.
    \end{equation*}
    We have that $f_m\rightarrow f$, as $m\rightarrow \infty$, in $L^2(\zinfn,\mu_\alpha)$.

    It follows that
    \begin{equation*}
    \begin{split}
        |D^\beta_x W^\alpha_t(x,y)| & \leq \frac{C}{(1-e^{-t})^{n+\widehat{\alpha}}}
        \int_{(-1,1)^n} \left|
        \partial^\beta_x e^{-\frac{q_-(e^{-t/2}x, y,s)}{1-e^{-t}}+|y|^2}
        \right|
        \prod_{i=1}^n (1-s_i^2)^{\alpha_i - 1/2} ds
        \\ & \leq C
        \frac{e^{-t/2}}{(1-e^{-t})^r}
        V(|x|,|y|), \quad x,y\in\zinfn,  t>0,
    \end{split}
    \end{equation*}
    where $r\geq n + \widehat{\alpha}$ and $V$ is a polynomial with positive coefficients. By using~\cite[Lemma~4]{BCCFR} we get
    \begin{equation*}
        \begin{split}
           \left|\partial^k_t D_x^\beta P^\alpha_t(x,y)\right| &
            \leq C \int_0^\infty \frac{\left|\partial^k_t
            \left[t e^{-\frac{t^2}{4u}}\right]\right|}{u^{3/2}} |D^\beta_x W^\alpha_u(x,y)| du
            \\ & \leq C
            \int_0^\infty
         \frac{e^{-\frac{t^2}{8u}}e^{-u/2}}{u^{\frac{k+2}{2}}(1-e^{-u})^r} du V(|x|,|y|)
         \\ & \leq C
         \left( 1+ \int_0^1 \frac{e^{-\frac{t^2}{8u}}}{u^{\frac{k+2}{2}+r}} du \right)
         V(|x|,|y|)
         \\ & \leq C
         \left( 1+ t^{-k-1-2r} \right)
         V(|x|,|y|),
         \quad x,y\in\zinfn ,  t>0.
        \end{split}
    \end{equation*}
    Therefore,
    \begin{equation*}
        \begin{split}
           \left|\partial^k_t D_x^\beta P^\alpha_t(f_m-f)(x)\right| &
            \leq C
         \left( 1+ t^{-k-1-2r} \right)
         \int_{\mathbb{R}^n}|f_m(y) - f(y)|
         V(|x|,|y|) d\mu_\alpha(y)
         \\ & \leq C
         \left( 1+ t^{-k-1-2r} \right)
         \left(\int_{\mathbb{R}^n}
         V^2(|x|,|y|) d\mu_\alpha(y)
         \right)^{1/2}\\
         &\quad \times
         \|f_m(y) - f(y)\|_{L^2(\zinfn,\mu_\alpha)},
        \end{split}
    \end{equation*}
    for $x, y\in\zinfn$, $t>0$ and $m\in \mathbb{N}$.

    We deduce that
    \begin{equation*}
        \lim_{m\rightarrow \infty}
        t^{k+\widehat{\beta}} \partial^k_t D_x^\beta P^\alpha_t(f_m)(x)
        = t^{k+\widehat{\beta}} \partial^k_t D_x^\beta P^\alpha_t(f)(x),
    \end{equation*}
    for $x\in \zinfn$ and $t>0$.

    By using Fatou's Lemma twice we get
    \begin{equation*}
        \begin{split}
            g^{\beta,k}_\alpha (f)(x)
            & =
            \left( \int_0^\infty
           \left|t^{k+\widehat{\beta}} \partial^k_t D_x^\beta P^\alpha_t(f)(x)\right|^2 \frac{dt}{t}
            \right)^{1/2}
            \\ & =
            \left( \int_0^\infty
            \lim_{m\rightarrow \infty}\left|t^{k+\widehat{\beta}} \partial^k_t D_x^\beta P^\alpha_t(f_m)(x)\right|^2 \frac{dt}{t}
            \right)^{1/2}
            \\ & \leq
            \liminf_{m\rightarrow \infty}
            g^{\beta,k}_\alpha (f_m)(x),\quad x\in\zinfn,
        \end{split}
    \end{equation*}
    and then
    \begin{equation*}
        \begin{split}
            \|g^{\beta,k}_\alpha (f)\|_{L^2(\zinfn,\mu_\alpha)} & \leq
            \left( \int_{\zinfn}
            \liminf_{m\rightarrow \infty}
            |g^{\beta,k}_\alpha (f_m)(x)|^2 d\mu_\alpha(x)
            \right)^{1/2}
            \\ & \leq  \liminf_{m\rightarrow \infty}
            \|g^{\beta,k}_\alpha (f_m)\|_{L^2(\zinfn,\mu_\alpha)}
            \\ & \leq C \lim_{m\rightarrow \infty}
          \|f_m\|_{L^2(\zinfn,\mu_\alpha)}
            \\ & \leq C
          \|f\|_{L^2(\zinfn,\mu_\alpha)}.
        \end{split}
    \end{equation*}
    Thus, we have proved that $g^{\beta,k}_\alpha$ is bounded on $L^2(\zinfn,\mu_\alpha)$.

    \section{Proof of Theorem~\ref{mainthm} for Laplace transform type multipliers}
    We recall that we have
    \begin{equation*}
        T^\alpha_m (f)(x)
        = \lim_{\epsilon\rightarrow 0^+}\left(f(x)\Lambda(\epsilon)+
        \int_{|x-y|>\epsilon} K^\alpha_\phi (x,y) f(y) d\mu_\alpha(y)\right), \;\;\text{a.e. } x\in\zinfn,
    \end{equation*}
    where $\Lambda\in L^\infty(\zinf)$ and
    \begin{equation*}
        K^\alpha_\phi (x,y) =
        \int_0^\infty \phi(t) \left(-\frac{\partial}{\partial t}\right)W^\alpha_t(x,y)dt,\quad x,y\in\zinfn,\; x\neq y,
    \end{equation*}
    being $\phi\in L^\infty(\zinfn)$ and $m(t) = t\int_0^\infty e^{-zt}\phi(z)dz$, $t\in\zinf$.

    We define $T^\alpha_{m,\loc}$, $K^\alpha_{\phi,\loc}$, $T^\alpha_{m,\glob}$ and $K^\alpha_{\phi,\glob}$ in the usual way.
    We firstly observe that
    \begin{equation*}
        \begin{split}
            |K^\alpha_{\phi,\glob}&(x,y)|\\
            & \leq C
            \int_{(-1,1)^n} \int_0^\infty
            \left|
            \partial_t \left[
            \frac{e^{-\frac{q_-(e^{-t/2}x,y,s)}{1-e^{-t}}}}{(1-e^{-t})^{\widehat{\alpha} + n}}
            \right]
            \right|
            |\phi(t)|dt
            |1-\varphi(x,y,s)| \Pi_\alpha(s)ds
            \\ & \leq C
            \int_{(-1,1)^n}
            \sup_{t>0}\left|
            \frac{e^{-\frac{q_-(e^{-t/2}x,y,s)}{1-e^{-t}}}}{(1-e^{-t})^{\widehat{\alpha} + n}}
            \right|
            |1-\varphi(x,y,s)| \Pi_\alpha(s)ds.
        \end{split}
    \end{equation*}
    By proceeding as in the proof of Section~\ref{sec4: max op} we conclude that $T^\alpha_{m,\glob}$ is bounded on $L^{p(\cdot)}(\zinfn,\mu_\alpha)$.

    We now define $\mathbb{K}^\alpha_{\phi,\loc}(x,y) = e^{-|y|^2} K^\alpha_{\phi,\loc}(x,y)$ for $x, y\in\zinfn$. By using \cite[Lemma~1]{Sa4} and \cite[Lemma~3.1]{BCN} we can see that $\mathbb{K}^\alpha_{\phi,\loc}(x,y)$ is a scalar Calder\'on--Zygmund kernel with respect to $\mathfrak{m}_\alpha$. According to \cite[Corollary~3, p.~121]{StLP}, the Laguerre multiplier $T^{\alpha}_m$ is bounded on $L^2(\zinfn,\mu_\alpha)$. Furthermore, as we have just mentioned $T^\alpha_{m,\glob}$ is bounded on $L^2(\zinfn,\mu_\alpha)$. Then, $T^\alpha_{m,\loc}$ is bounded on $L^2(\zinfn,\mu_\alpha)$ and on $L^2(\zinfn,\mathfrak{m}_\alpha)$.

    As it was proved in Section~\ref{sec2: method}, we can conclude that  $\mathbb{T}^\alpha_{m,\loc}$ is bounded on $L^{p(\cdot)}(\zinfn,\mu_\alpha)$ and finish the proof of our result.

\appendix \section{Auxiliary results}  For the sake of completeness we include in this appendix an $n$-dimensional version of the Three Lines Theorem
in the form it was used in Section~\ref{sec3: aux op}. Although it can be seen as a particular case of \cite[Proposition~21]{An}, we believe that this simpler form might be enough in many circumstances.

\begin{thm}\label{3lt}
Let $n\in\mathbb{N}$, $n\geq 1$. Assume that, for every $j=1,\dots,n$,
$a_j,\,b_j\in \mathbb{R}$ and $a_j<b_j$. We define
$\tau_n=\{z\in\mathbb{C}^n: a_j\leq\Real(z_j)\leq b_j,
j=1,\dots,n\}$ and $\mathcal{F}_n = \{z\in\mathbb{C}^n: \Real(z_j)\in
\{a_j,b_j\}, j=1,\dots,n \}$. Suppose that $U$ is an open set containing $\tau_n$ and $f:U \to\mathbb{C}$ is
holomorphic, bounded
in $\tau_n$,  and such that $|f(z)|\leq K$ for $z\in \mathcal{F}_n$. Then,
$|f(z)|\leq K$ for $z\in \tau_n$.
\end{thm}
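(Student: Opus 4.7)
The natural plan is an induction on the dimension $n$, reducing to the classical one-dimensional three lines theorem of Hadamard. The base case $n=1$ is exactly Hadamard's theorem: any bounded holomorphic function on an open set containing the strip $a_1\le\Real(w)\le b_1$, with modulus $\le K$ on the two boundary lines, satisfies $|f(w)|\le K$ throughout the strip.

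For the inductive step, rather than formally invoking the $(n-1)$-dimensional statement, I would slice the polystrip $\tau_n$ one coordinate at a time. More precisely, I plan to prove by finite induction on $j\in\{0,1,\dots,n\}$ the intermediate claim
\begin{equation*}
|f(z)|\le K\quad\text{whenever }z\in\tau_n\text{ and }\Real(z_i)\in\{a_i,b_i\}\text{ for every }i>j.
\end{equation*}
The case $j=0$ is exactly the hypothesis that $|f|\le K$ on $\mathcal{F}_n$, and the case $j=n$ is the desired conclusion. To pass from $j$ to $j+1$, I would fix arbitrary $z_1,\dots,z_j$ with $a_i\le\Real(z_i)\le b_i$ ($1\le i\le j$) and arbitrary $z_{j+2},\dots,z_n$ with $\Real(z_i)\in\{a_i,b_i\}$ ($i>j+1$), then consider the slice
\begin{equation*}
g(w)=f(z_1,\dots,z_j,w,z_{j+2},\dots,z_n),\qquad a_{j+1}\le\Real(w)\le b_{j+1}.
\end{equation*}
By the inductive hypothesis, $|g(w)|\le K$ when $\Real(w)\in\{a_{j+1},b_{j+1}\}$; $g$ is bounded on the closed strip because $f$ is bounded on $\tau_n$; and $g$ is holomorphic on an open neighborhood of this closed strip because $U$ is open and contains the compact segment in question. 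The classical three lines theorem then yields $|g(w)|\le K$ for all $w$ in the strip, which is precisely what is needed.

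There is really no serious obstacle here: the only point that requires a small verification is that the slice $g$ inherits the hypotheses of Hadamard's theorem, namely holomorphy on an open set containing the closed strip in $\mathbb{C}$ and boundedness there. Both follow at once from the fact that $U$ is open and $\tau_n\subset U$ (so one can find an open tube around each one-dimensional slice) together with the global bound of $f$ on $\tau_n$. Iterating the reduction $n$ times exhausts every coordinate and delivers the statement; alternatively, one may phrase the argument as a single induction on $n$ where the inner variable is handled by Hadamard and the remaining $n-1$ variables are absorbed into the hypothesis of the induction. Either packaging gives the desired $n$-dimensional three lines theorem without appealing to the more general framework of \cite[Proposition~21]{An}.
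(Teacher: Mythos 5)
Your proof is correct and is essentially the same argument as the paper's: both reduce the $n$-dimensional statement to iterated applications of the classical one-dimensional three lines theorem, relaxing one coordinate at a time while checking that each one-variable slice is holomorphic on an open neighborhood of its strip and bounded there. The paper packages this as an induction on the dimension $n$ (first freeing the first $n$ coordinates on the two faces $\Real(z_{n+1})\in\{a_{n+1},b_{n+1}\}$, then freeing the last coordinate), which is exactly the alternative phrasing you mention at the end.
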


\begin{proof}
We will proceed by induction on the dimension $n$. The case $n=1$
corresponds to the classical Three Lines Theorem and we refer
to \cite[Theorem~3.15]{Pal}.

Suppose the result is true for some $n\in\mathbb{N}$, $n\geq 1$. We consider
$a_j<b_j$ for $j=1,\dots,n+1$, $\tau_{n+1} =
\{z=(z_1,\dots,z_{n+1})\in \mathbb{C}^{n+1}: a_j\leq \Real(z_j)\leq
b_j, j=1,\dots,n+1\}$, $\mathcal{F}_{n+1} = \{ z=(z_1,\dots,z_{n+1}) \in
\mathbb{C}^{n+1}: \Real(z_j)\in\{a_j,b_j\}, j=1,\dots, n+1\}$, an open set $U$
containing $\tau_{n+1}$, and a function
$f:U\rightarrow \mathbb{C}$, holomorphic in $U$,  bounded on $\tau_{n+1}$, and such that
$|f(z)|\leq K$ for $z\in \mathcal{F}_{n+1}$.

Let $t\in \mathbb{R}$. We define, $z_{n+1}(t)= a_{n+1} + it$, and
$g_t:U_t \rightarrow \mathbb{C}$ such that $g_t(z_1,\dots,z_n) =
f(z_1,\dots,z_n,z_{n+1}(t))$, where \[U_t=\{z=(z_1,\dots,z_n)\in
\mathbb{C}^n:\,(z_1,\dots,z_n,z_{n+1}(t))\in U\}.\] It is clear that
$U_t$ is an open set in $\mathbb{C}^n$ that contains $\tau_n$. 
The function $g_t$ is holomorphic in $U_t$ and
bounded on $\tau_n$, and if $z= (z_1,\dots,z_n)\in \mathcal{F}_n$, since $\Real(z_{n+1})=a_{n+1}$, $|g_t(z_1, \dots,
z_n)| = |f(z_1,\dots,z_n,z_{n+1})|\leq K$. Then, using the inductive
hypothesis,
\begin{equation*}
|g_t(z_1,\dots,z_n)| = |f(z_1,\dots,z_n, z_{n+1}(t))| \leq K
\end{equation*}
for $z=(z_1,\dots,z_n)\in \tau_n$. Thus, we prove that
\begin{equation}\label{eq-3l1}
|f(z_1,\dots,z_{n+1})| \leq K
\text{ if } \Real(z_j)\in [a_j,b_j],\,
j=1,\dots,n; \ \Real(z_{n+1})=a_{n+1}.
\end{equation}

In a similar way, we can see that
\begin{equation}\label{eq-3l2}
|f(z_1,\dots,z_{n+1})| \leq K
\text{ if } \Real(z_j)\in [a_j,b_j],\,
j=1,\dots,n ; \ \Real(z_{n+1})=b_{n+1}.
\end{equation}

Let now $c=(c_1,\dots,c_n) \in \prod_{j=1}^n [a_j,b_j]$ and
$t=(t_1,\dots,t_n)\in \mathbb{R}^n$. We consider $\tau_0 = \{
z\in\mathbb{C}: \Real(z)\in [a_{n+1}, b_{n+1}]\}$, $ \mathcal{F}_0 = \{ z\in
\mathbb{C}: \Real(z) \in \{a_{n+1}, b_{n+1}\}\}$, and $h^c_t: U_0
\rightarrow \mathbb{C}$ such that
\[
h^c_t(z) = f(c_1 + it_1,\dots,c_n + it_n, z),\quad z\in U_0,
\]
where $U_0=\{z\in \mathbb{C}:\,(c_1 + it_1,\dots,c_n + it_n, z)\in
U\}$. The set $U_0$ is open in $\mathbb{C}$ and it contains
$\tau_0$. The function $h^c_t$ is holomorphic in $U_0$ and bounded on
$\tau_0$. Furthermore, by~\eqref{eq-3l1} and~\eqref{eq-3l2}, if
$z\in \mathcal{F}_0$,
\begin{equation*}
|h^c_t(z)| = |f(c_1 + i t_1, \dots, c_n + i t_n, z)| \leq K.
\end{equation*}

Therefore, by the one-dimensional case, we deduce that
\begin{equation*}
|h^c_t(z)|\leq K,\quad z\in\tau_0.
\end{equation*}

Thus we conclude that
\begin{equation*}
|f(z)| \leq K,\quad z\in \tau_{n+1}.\qedhere
\end{equation*}
\end{proof}

\begin{lem}\label{lem:pbarra} Let $p:\zinfn\rightarrow [1,\infty)$ be a measurable function such that $p\in \textup{LH}(\zinfn)$ and take $k=(k_1,\dots, k_n)\in \mathbb N^n$ with $k_j\ge 1$ for each $j=1,\dots, n$. Consider $\overline{x}=(\overline{x_1},\dots, \overline{x_n})\in \mathbb R^{\widehat{k}}$ with $\overline{x_j}\in \mathbb R^{k_j}$, $j=1,\dots, n$. We define $\overline{p}:\mathbb R^{\widehat{k}} \rightarrow [1,\infty)$ by $\overline{p}(\overline{x})=p(|\overline{x_1}|,\dots, |\overline{x_n}|)$. Then, $\overline{p}\in \textup{LH}(\mathbb R^{\widehat{k}})$. Moreover, if $1<p^-\leq p^+<\infty$, also $1<\overline{p}^-\leq \overline{p}^+<\infty$.
\end{lem}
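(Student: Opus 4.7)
The plan is to transfer the log-Hölder conditions from $p$ to $\overline{p}$ by viewing $\overline{p}$ as the composition $\overline{p}=p\circ\Phi$, where $\Phi:\mathbb{R}^{\widehat{k}}\to\zinfn$ is the ``componentwise norm'' map
\[
\Phi(\overline{x})=(|\overline{x_1}|,\dots,|\overline{x_n}|),\qquad \overline{x}=(\overline{x_1},\dots,\overline{x_n})\in\mathbb{R}^{k_1}\times\cdots\times\mathbb{R}^{k_n}.
\]
Two elementary facts about $\Phi$ will do all the work: first, $|\Phi(\overline{x})|=|\overline{x}|$ (since $|\Phi(\overline{x})|^2=\sum_{j=1}^n|\overline{x_j}|^2=|\overline{x}|^2$); and second, $\Phi$ is $1$-Lipschitz, which follows from the reverse triangle inequality applied to each block:
\[
|\Phi(\overline{x})-\Phi(\overline{y})|^2=\sum_{j=1}^n\bigl(|\overline{x_j}|-|\overline{y_j}|\bigr)^2\leq\sum_{j=1}^n|\overline{x_j}-\overline{y_j}|^2=|\overline{x}-\overline{y}|^2.
\]

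For the decay condition $\textup{LH}_\infty$, the first fact gives immediately
\[
|\overline{p}(\overline{x})-p_\infty|=|p(\Phi(\overline{x}))-p_\infty|\leq\frac{C}{\log(e+|\Phi(\overline{x})|)}=\frac{C}{\log(e+|\overline{x}|)},\qquad \overline{x}\in\mathbb{R}^{\widehat{k}}.
\]
For the local condition $\textup{LH}_0$, fix $\overline{x},\overline{y}\in\mathbb{R}^{\widehat{k}}$ with $0<|\overline{x}-\overline{y}|<1/2$. If $\Phi(\overline{x})=\Phi(\overline{y})$, there is nothing to prove. Otherwise, the second fact yields $0<|\Phi(\overline{x})-\Phi(\overline{y})|\leq|\overline{x}-\overline{y}|<1/2$, so $-\log|\Phi(\overline{x})-\Phi(\overline{y})|\geq-\log|\overline{x}-\overline{y}|>0$, and the monotonicity of $1/(-\log\,\cdot\,)$ on $(0,1/2)$ combined with $p\in\textup{LH}_0(\zinfn)$ produces
\[
|\overline{p}(\overline{x})-\overline{p}(\overline{y})|\leq\frac{C}{-\log|\Phi(\overline{x})-\Phi(\overline{y})|}\leq\frac{C}{-\log|\overline{x}-\overline{y}|}.
\]

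The assertion about the essential supremum and infimum is immediate: since $\Phi$ is surjective onto $\zinfn$ and $\overline{p}=p\circ\Phi$, the set of values taken by $\overline{p}$ is exactly the set of values taken by $p$, so $\overline{p}^-=p^-$ and $\overline{p}^+=p^+$. Hence $1<p^-\leq p^+<\infty$ transfers verbatim to $\overline{p}$. There is no real obstacle here; the only step worth double-checking is the monotonicity used in the $\textup{LH}_0$ argument, which is why I kept the restriction $|\overline{x}-\overline{y}|<1/2$ throughout.
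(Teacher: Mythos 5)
Your proof is correct and follows essentially the same route as the paper's: both establish that the blockwise-norm map $\Phi$ is $1$-Lipschitz (you via the reverse triangle inequality, the paper via Cauchy--Schwarz, which are two ways of saying the same thing) and preserves the Euclidean norm, and then transfer $\textup{LH}_0$ and $\textup{LH}_\infty$ directly. The only small difference is that you explicitly dispose of the degenerate case $\Phi(\overline{x})=\Phi(\overline{y})$, which the paper leaves implicit.
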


\begin{proof}First, we shall see that $\overline{p}$ belongs to $\textup{LH}_0(\mathbb R^{\widehat{k}})$, so we take $\overline{x} = (\overline{x_1},\dots,\overline{x_n})$, $\overline{y} = (\overline{y_1},\dots,\overline{y_n}) \in \mathbb R^{\widehat{k}}$, with $\overline{x_j}, \overline{y_j}\in \mathbb R^{k_j}$, $j=1,\dots, n$, and such that $ 0 < |\overline{x} - \overline{y}| < \frac12$. We have that
     \begin{equation*}
       |(|\overline{x_1}|-|\overline{y_1}|,\dots,|\overline{x_n}|-|\overline{y_n}|)| \leq |\overline{x}-\overline{y}|.
     \end{equation*}
     Indeed, if we write $\overline{x_j} = (x_{1}^j,\dots,x_{{k_j}}^j)$, $\overline{y_j} = (y_{1}^j,\dots,y_{{k_j}}^j)$, with $j=1,\dots,n$,
     this inequality is a consequence of the Cauchy--Schwarz inequality on $\mathbb{R}^{k_j},$ i.e.
     $|\langle \overline{x_j},\overline{y_j}\rangle|\le |\overline{x_j}||\overline{y_j}|$, $j=1,\dots,n$.

    Since $p\in \textup{LH}_0(\zinfn)$ it follows that
    \begin{equation*}
    \begin{split}
        |\overline{p}(\overline{x}) - \overline{p}(\overline{y})|
        & =
        |p(|\overline{x_1}|,\dots,|\overline{x_n}|) - p(|\overline{y_1}|,\dots,|\overline{y_n}|)|
        \\ & \leq \frac{C}{-\log(|(|\overline{x_1}|-|\overline{y_1}|,
        \dots,|\overline{x_n}|-|\overline{y_n}|)|)}
        \\ & \leq
        \frac{C}{-\log(|\overline{x}-\overline{y}|)}.
    \end{split}
    \end{equation*}
    Thus, $\overline{p}\in \textup{LH}_0(\mathbb R^{\widehat{k}})$.

    On the other hand, since $p\in \textup{LH}_\infty(\zinfn)$ and $|(|\overline{x_1}|,\dots, |\overline{x_n}|)|=|\overline{x}|$,
    \[|\overline{p}(\overline{x})-p_\infty|=|p(|\overline{x_1}|,\dots, |\overline{x_n}|)-p_\infty|\leq \frac{C}{\log(e+|(|\overline{x_1}|,\dots, |\overline{x_n}|)|)}=\frac{C}{\log(e+|\overline{x}|)},\]
    so $\overline{p}\in\textup{LH}_\infty(\mathbb{R}^{\widehat{k}})$ with $\overline{p}_\infty=p_\infty$.

    Therefore, we have proved that $\overline{p}\in\textup{LH}(\mathbb R^{\widehat{k}}) $.

    Finally, from the definition of $\overline{p}$, it is clear that $\overline{p}^-=p^-$ and $\overline{p}^+=p^+$, so $1<p^-\leq p^+<\infty$ is equivalent to $1<\overline{p}^-\leq \overline{p}^+<\infty$.
\end{proof}

\bibliographystyle{acm}

\bibliography{BDQS}

\begin{thebibliography}{10}

\bibitem{AM}
{\sc Acerbi, E., and Mingione, G.}
\newblock Regularity results for a class of functionals with non-standard
  growth.
\newblock {\em Arch. Ration. Mech. Anal. 156}, 2 (2001), 121--140.

\bibitem{AHH}
{\sc Adamowicz, T., Harjulehto, P., and H\"{a}st\"{o}, P.}
\newblock Maximal operator in variable exponent {L}ebesgue spaces on unbounded
  quasimetric measure spaces.
\newblock {\em Math. Scand. 116}, 1 (2015), 5--22.

\bibitem{AD}
{\sc Anderson, T.~C., and Damián, W.}
\newblock Calder\'on-{Z}ygmund operators and commutators in spaces of
  homogeneous type: weighted inequalities, 2014.
\newblock arXiv:1401.2061.

\bibitem{An}
{\sc Anker, J.-P.}
\newblock {${\bf L}_p$} {F}ourier multipliers on {R}iemannian symmetric spaces
  of the noncompact type.
\newblock {\em Ann. of Math. (2) 132}, 3 (1990), 597--628.

\bibitem{BCCFR}
{\sc Betancor, J.~J., Castro, A.~J., Curbelo, J., Fari\~{n}a, J.~C., and
  Rodr\'{\i}guez-Mesa, L.}
\newblock Square functions in the {H}ermite setting for functions with values
  in {UMD} spaces.
\newblock {\em Ann. Mat. Pura Appl. (4) 193}, 5 (2014), 1397--1430.

\bibitem{BCCR}
{\sc Betancor, J.~J., Castro, A.~J., Curbelo, J., and Rodr\'{\i}guez-Mesa, L.}
\newblock Characterization of {UMD} {B}anach spaces by imaginary powers of
  {H}ermite and {L}aguerre operators.
\newblock {\em Complex Anal. Oper. Theory 7}, 4 (2013), 1019--1048.

\bibitem{BCN}
{\sc Betancor, J.~J., Castro, A.~J., and Nowak, A.}
\newblock Calder\'{o}n--{Z}ygmund operators in the {B}essel setting.
\newblock {\em Monatsh. Math. 167}, 3-4 (2012), 375--403.

\bibitem{BFRS}
{\sc Betancor, J.~J., Fari\~{n}a, J.~C., Rodr\'{\i}guez-Mesa, L., and
  Sanabria-Garc\'{\i}a, A.}
\newblock Higher order {R}iesz transforms for {L}aguerre expansions.
\newblock {\em Illinois J. Math. 55}, 1 (2011), 27--68 (2012).

\bibitem{BdL}
{\sc Betancor, J.~J., and León-Contreras, M.~D.}
\newblock Variation inequalities for {R}iesz transforms and {P}oisson
  semigroups associated with {L}aguerre polynomial expansions, 2021.
\newblock arXiv:2110.03493.

\bibitem{CLR}
{\sc Chen, Y., Levine, S., and Rao, M.}
\newblock Variable exponent, linear growth functionals in image restoration.
\newblock {\em SIAM J. Appl. Math. 66}, 4 (2006), 1383--1406.

\bibitem{CW}
{\sc Coifman, R.~R., and Weiss, G.}
\newblock {\em Analyse harmonique non-commutative sur certains espaces
  homog\`enes}.
\newblock Lecture Notes in Mathematics, Vol. 242. Springer-Verlag, Berlin-New
  York, 1971.
\newblock \'{E}tude de certaines int\'{e}grales singuli\`eres.

\bibitem{CUFMP}
{\sc Cruz-Uribe, D., Fiorenza, A., Martell, J.~M., and P\'{e}rez, C.}
\newblock The boundedness of classical operators on variable {$L^p$} spaces.
\newblock {\em Ann. Acad. Sci. Fenn. Math. 31}, 1 (2006), 239--264.

\bibitem{CUFN}
{\sc Cruz-Uribe, D., Fiorenza, A., and Neugebauer, C.~J.}
\newblock The maximal function on variable {$L^p$} spaces.
\newblock {\em Ann. Acad. Sci. Fenn. Math. 28}, 1 (2003), 223--238.

\bibitem{CUF}
{\sc Cruz-Uribe, D.~V., and Fiorenza, A.}
\newblock {\em Variable {L}ebesgue spaces}.
\newblock Applied and Numerical Harmonic Analysis. Birkh\"{a}user/Springer,
  Heidelberg, 2013.
\newblock Foundations and harmonic analysis.

\bibitem{DS1}
{\sc Dalmasso, E., and Scotto, R.}
\newblock Riesz transforms on variable {L}ebesgue spaces with {G}aussian
  measure.
\newblock {\em Integral Transforms Spec. Funct. 28}, 5 (2017), 403--420.

\bibitem{DS2}
{\sc Dalmasso, E., and Scotto, R.}
\newblock New {G}aussian {R}iesz transforms on variable {L}ebesgue spaces.
\newblock To appear in Anal. Math., 2021.

\bibitem{Die}
{\sc Diening, L.}
\newblock Maximal function on generalized {L}ebesgue spaces {$L^{p(\cdot)}$}.
\newblock {\em Math. Inequal. Appl. 7}, 2 (2004), 245--253.

\bibitem{DHHR}
{\sc Diening, L., Harjulehto, P., H\"{a}st\"{o}, P., and R{\r u}\v{z}i\v{c}ka,
  M.}
\newblock {\em Lebesgue and {S}obolev spaces with variable exponents},
  vol.~2017 of {\em Lecture Notes in Mathematics}.
\newblock Springer, Heidelberg, 2011.

\bibitem{DR}
{\sc Diening, L., and R{\r u}\v{z}i\v{c}ka, M.}
\newblock Calder\'{o}n-{Z}ygmund operators on generalized {L}ebesgue spaces
  {$L^{p(\cdot)}$} and problems related to fluid dynamics.
\newblock {\em J. Reine Angew. Math. 563\/} (2003), 197--220.

\bibitem{Di}
{\sc Dinger, U.}
\newblock Weak type {$(1,1)$} estimates of the maximal function for the
  {L}aguerre semigroup in finite dimensions.
\newblock {\em Rev. Mat. Iberoamericana 8}, 1 (1992), 93--120.

\bibitem{FSS1}
{\sc Forzani, L., Sasso, E., and Scotto, R.}
\newblock Weak-type inequalities for higher order {R}iesz-{L}aguerre
  transforms.
\newblock {\em J. Funct. Anal. 256}, 1 (2009), 258--274.

\bibitem{GMMST}
{\sc Garc\'{\i}a-Cuerva, J., Mauceri, G., Meda, S., Sj\"{o}gren, P., and
  Torrea, J.~L.}
\newblock Functional calculus for the {O}rnstein-{U}hlenbeck operator.
\newblock {\em J. Funct. Anal. 183}, 2 (2001), 413--450.

\bibitem{GLY}
{\sc Grafakos, L., Liu, L., and Yang, D.}
\newblock Vector-valued singular integrals and maximal functions on spaces of
  homogeneous type.
\newblock {\em Math. Scand. 104}, 2 (2009), 296--310.

\bibitem{Leb}
{\sc Lebedev, N.~N.}
\newblock {\em Special functions and their applications}.
\newblock Dover Publications, Inc., New York, 1972.
\newblock Revised edition, translated from the Russian and edited by Richard A.
  Silverman, Unabridged and corrected republication.

\bibitem{Lo}
{\sc Lorist, E.}
\newblock On pointwise {$\ell^r$}-sparse domination in a space of homogeneous
  type.
\newblock {\em J. Geom. Anal. 31}, 9 (2021), 9366--9405.

\bibitem{MPS}
{\sc Men\'{a}rguez, T., P\'{e}rez, S., and Soria, F.}
\newblock The {M}ehler maximal function: a geometric proof of the weak type 1.
\newblock {\em J. London Math. Soc. (2) 61}, 3 (2000), 846--856.

\bibitem{MRA}
{\sc Meskhi, A., Rafeiro, H., and Asad~Zaighum, M.}
\newblock Interpolation of an analytic family of operators on variable exponent
  {M}orrey spaces.
\newblock {\em Hiroshima Math. J. 48}, 3 (2018), 335--346.

\bibitem{MPU}
{\sc Moreno, J., Pineda, E., and Urbina, W.}
\newblock The {B}oundedness of the {O}rnstein-{U}hlenbeck semigroup on variable
  {L}ebesgue spaces with respect to the {G}aussian measure, 2019.
\newblock arXiv:1911.06375.

\bibitem{Mu1}
{\sc Muckenhoupt, B.}
\newblock Poisson integrals for {H}ermite and {L}aguerre expansions.
\newblock {\em Trans. Amer. Math. Soc. 139\/} (1969), 231--242.

\bibitem{Mu2}
{\sc Muckenhoupt, B.}
\newblock Conjugate functions for {L}aguerre expansions.
\newblock {\em Trans. Amer. Math. Soc. 147\/} (1970), 403--418.

\bibitem{NPU}
{\sc Navas, E., Pineda, E., and Urbina, W.}
\newblock The {B}oundedness of {G}eneral {A}lternative {G}aussian {S}ingular
  {I}ntegrals on variable {L}ebesgue spaces with {G}aussian measure, 2021.
\newblock arXiv:2006.12985.

\bibitem{N}
{\sc Nowak, A.}
\newblock On {R}iesz transforms for {L}aguerre expansions.
\newblock {\em J. Funct. Anal. 215}, 1 (2004), 217--240.

\bibitem{NS}
{\sc Nowak, A., and Stempak, K.}
\newblock {$L^2$}-theory of {R}iesz transforms for orthogonal expansions.
\newblock {\em J. Fourier Anal. Appl. 12}, 6 (2006), 675--711.

\bibitem{Pal}
{\sc Palka, B.~P.}
\newblock {\em An introduction to complex function theory}.
\newblock Undergraduate Texts in Mathematics. Springer-Verlag, New York, 1991.

\bibitem{P}
{\sc P\'{e}rez, S.}
\newblock The local part and the strong type for operators related to the
  {G}aussian measure.
\newblock {\em J. Geom. Anal. 11}, 3 (2001), 491--507.

\bibitem{RRT}
{\sc Rubio~de Francia, J.~L., Ruiz, F.~J., and Torrea, J.~L.}
\newblock Calder\'{o}n-{Z}ygmund theory for operator-valued kernels.
\newblock {\em Adv. in Math. 62}, 1 (1986), 7--48.

\bibitem{Ru}
{\sc R{\r u}\v{z}i\v{c}ka, M.}
\newblock {\em Electrorheological fluids: modeling and mathematical theory},
  vol.~1748 of {\em Lecture Notes in Mathematics}.
\newblock Springer-Verlag, Berlin, 2000.

\bibitem{Sa4}
{\sc Sasso, E.}
\newblock Spectral multipliers of {L}aplace transform type for the {L}aguerre
  operator.
\newblock {\em Bull. Austral. Math. Soc. 69}, 2 (2004), 255--266.

\bibitem{Sa2}
{\sc Sasso, E.}
\newblock Functional calculus for the {L}aguerre operator.
\newblock {\em Math. Z. 249}, 3 (2005), 683--711.

\bibitem{Sa3}
{\sc Sasso, E.}
\newblock Maximal operators for the holomorphic {L}aguerre semigroup.
\newblock {\em Math. Scand. 97}, 2 (2005), 235--265.

\bibitem{Sa1}
{\sc Sasso, E.}
\newblock Weak type estimates for {R}iesz-{L}aguerre transforms.
\newblock {\em Bull. Austral. Math. Soc. 75}, 3 (2007), 397--408.

\bibitem{StLP}
{\sc Stein, E.~M.}
\newblock {\em Topics in harmonic analysis related to the {L}ittlewood-{P}aley
  theory}.
\newblock Annals of Mathematics Studies, No. 63. Princeton University Press,
  Princeton, N.J.; University of Tokyo Press, Tokyo, 1970.

\bibitem{Sz}
{\sc Szeg\"{o}, G.}
\newblock {\em Orthogonal polynomials}.
\newblock American Mathematical Society Colloquium Publications, Vol. 23.
  American Mathematical Society, Providence, R.I., 1959.
\newblock Revised ed.

\bibitem{Yos}
{\sc Yosida, K.}
\newblock {\em Functional analysis}, sixth~ed., vol.~123 of {\em Grundlehren
  der Mathematischen Wissenschaften [Fundamental Principles of Mathematical
  Sciences]}.
\newblock Springer-Verlag, Berlin-New York, 1980.

\end{thebibliography}
\end{document}